\documentclass[a4paper]{amsart}
\usepackage{amsmath,amsthm,amssymb,latexsym,epic,bbm,comment,mathrsfs}
\usepackage{graphicx,enumerate,stmaryrd,color,tikz,ytableau,xcolor}
\usepackage[all,2cell]{xy}
\xyoption{2cell}
\usetikzlibrary{patterns,snakes}

\newtheorem{theorem}{Theorem}
\newtheorem{lemma}[theorem]{Lemma}
\newtheorem{corollary}[theorem]{Corollary}
\newtheorem{proposition}[theorem]{Proposition}
\newtheorem{conjecture}[theorem]{Conjecture}
\newtheorem{example}[theorem]{Example}
\newtheorem{definition}[theorem]{Definition}

\usepackage[all]{xy}
\usepackage[active]{srcltx}
\usepackage[parfill]{parskip}
\usepackage{enumerate}

\begin{document}
\title[Kostant cuspidal permutations]
{Kostant cuspidal permutations}

\author[Samuel Creedon and Volodymyr Mazorchuk]
{Samuel Creedon and Volodymyr Mazorchuk}

\begin{abstract}
In relation to Kostant's problem for simple highest weight
modules over the general linear Lie algebra, we prove a
persistence result for Kostant negative consecutive patterns.
Inspired by it, we introduce the notion of a Kostant cuspidal
permutation as a minimal Kostant negative consecutive pattern.
It is shown that Kostant cuspidality is an invariant of a 
Kazhdan-Lusztig left cell. We describe four infinite families 
of Kostant cuspidal involutions, including a complete classification
of Kostant cuspidal fully commutative involutions. In particular,
we show that the number of new Kostant cuspidal elements 
can be arbitrarily large, when the rank grows. This provides
some potential explanation why Kostant's problem is hard.
\end{abstract}

\maketitle

\section{Introduction and description of results}\label{s1}

Kostant's problem is a classical open question about 
Lie algebra representations, see \cite{Jo}. Given a 
finite dimensional Lie algebra $\mathfrak{g}$ and 
a $\mathfrak{g}$-module $M$, {\em Kostant's problem} 
for the module $M$ is the question whether the 
universal enveloping algebra of $\mathfrak{g}$ surjects
onto the algebra of adjointly finite linear endomorphisms of 
$M$. The answer is known for some classes of modules
but is unknown in general, even for very classical 
classes of modules, for example, for simple highest weight
modules over semi-simple complex Lie algebras.
Naturally, we will call $M$ {\em Kostant positive}
provided that the answer to Kostant's problem for
$M$ is positive and {\em Kostant negative} otherwise.

Recently, it was observed (see \cite{KMM}) that Kostant's 
problem  for simple highest weight modules is
closely connected to the action of the bicategory
of projective functors (see \cite{BG}) on the simple
highest weight module in question. This observation was
a biproduct of an attempt to understand the conjecture about
Kostant's problem for some simple highest weight modules 
proposed by Johan K{\aa}hrstr{\"o}m, see \cite{KMM}.
The conjecture relates Kostant's problem to combinatorics
of the Hecke algebra. 

The insight of this conjecture allowed for several advances
in our understanding of Kostant's problem. To start with, 
it led to a complete answer to Kostant's problem for simple
highest weight modules indexed by fully commutative 
permutations, see \cite{MMM}. This answer was explicit enough
to allow for enumeration of positive and negative solutions
which resulted in a conjecture that, for simple
highest weight modules in the regular block of the BGG 
category $\mathcal{O}$ for $\mathfrak{sl}_n$, the
answer is almost always negative, asymptotically, when
$n$ tends to $\infty$.

This conjecture was proved in \cite{CM1}. The proof is 
based on combinatorics of consecutive patterns. The 
idea is as follows: the smallest Kostant negative simple
highest weight module in the regular block of 
category $\mathcal{O}$ for $\mathfrak{sl}_n$ is the module
corresponding to the permutation $2143$ in $S_4$
(here we use the one-line notation) for the Lie algebra $\mathfrak{sl}_4$.
The main observation of \cite{CM1} was that, given 
an element $w\in S_n$, the corresponding simple
highest weight module is Kostant negative provided
that $w$ consecutively contains the pattern $2143$.

The main result of the present paper,
see Theorem~\ref{thm-main}, is the following 
vast generalization of the observation in \cite{CM1}.
\vspace{2mm}

{\bf Theorem A.} {\em Let $w\in S_n$ be such that the
corresponding simple highest weight module $L_w$ 
in the regular block of  category $\mathcal{O}$ for 
$\mathfrak{sl}_n$ is Kostant negative. 
Let $x\in S_k$, for $k\geq n$, be such that 
$x$ consecutively contains $w$. Then the
corresponding simple highest weight module $L_x$ 
in the regular block of  category $\mathcal{O}$ for 
$\mathfrak{sl}_k$ is Kostant negative. }
\vspace{2mm}

Adapting the terminology from \cite{Mat}, we call
$w\in S_n$ {\em Kostant cuspidal} provided that 
it is Kostant negative, however, all proper
consecutive patterns of $w$ are Kostant positive.
Our second main result is the following,
see Theorems~\ref{thm-s7.3-1}, \ref{thm-cusp1}, \ref{thm-cusp2} 
and \ref{thm-cusp3}:

{\bf Theorem B.} {\em 
\begin{enumerate}[$($a$)$]
\item The only fully commutative Kostant cuspidal involutions of 
$S_n$ are the elements 
\begin{displaymath}
(i+1)(i+2)\dots (2i)12\dots i
(j+1)(j+2)\dots n(2i+1)(2i+2)\dots j
\end{displaymath}
(one-line notation),
where $i=1,2,\dots,k-1$ and $j=2i+\frac{n-2i}{2}$, for $n=2k$ even.
\item For $n\geq 5$, the element
$1(n-1)(n-2)\dots 2n$ (one-line notation) of $S_n$ is Kostant cuspidal.
\item For $n\geq 7$, the element
$32145\dots (n-3)n(n-1)(n-2)$ (one-line notation) of $S_n$ is Kostant cuspidal.
\item For $n\geq 6$ and $i\in\{3,4,\dots,n-3\}$, the product
$(1,i+1)(i,n)$ of transpositions in $S_n$ is Kostant cuspidal.
\end{enumerate}
}
\vspace{2mm}

Theorem~B implies, in particular, that there are infinitely many Kostant cuspidal
elements, moreover, the number of Kostant cuspidal
elements in $S_n$ can be arbitrarily large when $n$ grows
(in fact, for $n\geq 6$, this number is at least $n-4$). 
In some way,  this explains why Kostant problem is so
difficult. To solve it completely for simple highest weight
modules over all special linear Lie algebras, one has to determine 
all  Kostant cuspidal permutations in all symmetric groups.

It is known, see \cite{MS1}, that the answer to Kostant's problem
is an invariant of a Kazhdan-Lusztig (KL) left cell. This means that,
if two permutations $x$ and $y$ belong to the same 
KL left cell, then the answer to Kostant's problem
for the corresponding simple highest weight modules $L_x$
and $L_y$ is the same. Each KL left cell in a symmetric group 
contains a unique involution. Consequently, it is enough to 
determine Kostant cuspidal involutions. Via the Robinson-Schensted
map, involutions correspond bijectively to standard Young tableaux.
We thus can speak of {\em Kostant cuspidal standard Young tableaux},
defined in the obvious way.
Consequently, to answer Kostant's problem for simple highest weight modules
in the principal block of category $\mathcal{O}$ for the special
linear Lie algebra, it is enough to determine all Kostant cuspidal
standard Young tableaux.

In this paper we use both combinatorial and representation theoretic 
arguments. The combinatorial part mainly addresses various properties of
the Kazhdan-Lusztig combinatorics of the Hecke algebra.
This includes fairly advanced new results, in particular, 
the AI-inspired formulae for the Kazhdan-Lusztig polynomials
from \cite{BBDVW}. The representation theoretic part 
employs detailed knowledge of functorial actions 
on the regular block of the BGG category $\mathcal{O}$
which was developed  during the last 30 years.
At several places, we also use GAP3 computations
(mostly, with help of the CHEVIE package which allows for fairly quick
computations with Kazhdan-Lusztig and dual Kazhdan-Lusztig
bases of the Hecke algebra).

\subsection*{Acknowledgments} This research is supported
by Vergstiftelsen. The first author is also partially
supported by the Swedish Research Council. 
All computations were done using GAP3 and, in particular,
its CHEVIE package, see \cite{GAP3,GAP3-2}.
We used the Microsoft AI Copilot as a general help when writing 
some of our GAP3 codes.

\section{Preliminaries}\label{s2}

\subsection{Symmetric group}\label{s2.1}

For $n\geq 0$, let $S_n$ be the {\em symmetric group} of all permutations of 
$\underline{n}:=\{1,2,\dots,n\}$. For $i=1,2,\dots,n-1$, we denote by
$s_i$ the {\em elementary transposition} $(i,i+1)$. Then $S_n$ is a Coxeter group
if we choose the set of all elementary transposition to be the set of
{\em simple reflections}.

For $\lambda\vdash n$, let $\mathbf{SYT}_\lambda$ be the set of all standard
Young tableaux of shape $\lambda$. Then the classical Robinson-Schensted
correspondence provides a bijection 
\begin{displaymath}
\mathbf{RS}:S_n\overset{\sim}{\longrightarrow}
\coprod_{\lambda\vdash n}
\mathbf{SYT}_\lambda\times \mathbf{SYT}_\lambda,
\end{displaymath}
written $\mathbf{RS}(w)=(P_w,Q_w)$, for $w\in S_n$.
Here $P_w$ is the {\em insertion tableau} and $Q_w$ is the {\em recording
tableau} in Schensted's algorithm, see \cite{Sch,Sa}. Recall
that $w\in S_n$ is an {\em involution}, that is $w^2=e$,
if and only if $P_w=Q_w$. Therefore, sending $w$ to $P_w$,
induces a bijection between the set $\mathrm{Inv}_n$ of all
involutions in $S_n$ and the set 
\begin{displaymath}
\mathbf{SYT}_n:=\coprod_{\lambda\vdash n}
\mathbf{SYT}_\lambda. 
\end{displaymath}

For $1\leq i<j\leq n$, we denote by $\mathrm{inv}(i,j)$ the element
\begin{displaymath}
12\dots(i-1){\color{violet}j(j-1)\dots (i+1)i}(j+1)\dots n\in S_n, 
\end{displaymath}
given in one-line notation, with the non-trivial part colored
in {\color{violet}violet}. The difference between $\mathrm{inv}(i,j)$
and the identity is that we reverse the order for all elements
between $i$ and $j$ (including them).

\subsection{Hecke algebra}\label{s2.2}

Let $\mathbb{A}:=\mathbb{Z}[v,v^{-1}]$.
Let $\mathbf{H}_n$ be the {\em Hecke algebra} of the Coxeter group
$S_n$ over $\mathbb{A}$. It has the {\em standard basis} $\{H_w\,:\,w\in S_n\}$
as well as the {\em Kazhdan-Lusztig basis} $\{\underline{H}_w\,:\,w\in S_n\}$
and the {\em dual Kazhdan-Lusztig basis} $\{\underline{\hat{H}}_w\,:\,w\in S_n\}$,
see \cite{KL}. We use Soergel's normalization from \cite{So}.
The element $\underline{H}_w$ is uniquely determined by the property that
it is invariant under the bar involution of $\mathbf{H}_n$ and has the form
\begin{equation}\label{eq-eqn125}
\underline{H}_w=H_w+\sum_{x<w}p_{w,x}H_x, 
\end{equation}
where $p_{w,v}\in v\mathbb{Z}[v]$ are the {\em Kazhdan Lusztig polynomials}
and $<$ is the Bruhat order on $S_n$.

Recall the {\em Kazhdan-Lusztig left, right and two-sided preorders} on $S_n$,
see \cite{KL}. The left preorder is defined as follows: for $x,y\in S_n$,
we have $x\geq_L y$ provided that there is $w\in W$ such that 
$\underline{H}_x$ appears with a non-zero coefficient when expressing
$\underline{H}_w\underline{H}_y$ with respect to the Kazhdan-Lusztig basis.
The equivalence classes for $\geq_L$ are called {\em left cells}.
We denote the left equivalence of $x,y\in S_n$ by $x\sim_L y$.
The right order $\geq_R$ and the {\em right cells} 
are defined similarly using
$\underline{H}_y\underline{H}_w$.
We denote the right equivalence of $x,y\in S_n$ by $x\sim_R y$.
The two-sided order $\geq_J$ and the {\em two-sided cells} 
are defined similarly using
$\underline{H}_{w_1}\underline{H}_y\underline{H}_{w_2}$. 
We denote the two-sided equivalence of $x,y\in S_n$ by $x\sim_J y$.
From \cite{KL}, we know that
\begin{itemize}
\item for $x,y\in S_n$, we have $x\sim_L y$ if and only if $Q_x=Q_y$;
\item for $x,y\in S_n$, we have $x\sim_R y$ if and only if $P_x=P_y$;
\item for $x,y\in S_n$, we have $x\sim_J y$ if and only if 
the shape of $Q_x$ coincides with the shape of $Q_y$.
\end{itemize}
In particular, the two-sided cells of $S_n$ are in bijection with 
partitions of $n$. Moreover, both left and right cells of $S_n$
are in bijection with $\mathbf{SYT}_n$. In fact, each left and each right
cell contains a unique involution, the so-called {\em Duflo element}.

From \cite{Ge}, we know that $\leq_J$ is given by the 
opposite of the dominance order on partitions.
To the best of our knowledge, so far there is no known combinatorial
description of $\leq_L$ (equivalently, of $\leq_R$).

For $x,y\in S_n$ we have 
$\underline{\hat{H}}_y\underline{H}_x\neq 0$
if and only if $x^{-1}\leq_L y$, see, for example, \cite[Formula~(1)]{KiM}.

As usual, we denote by $\mu$ the {\em Kazhdan-Lusztig $\mu$-function}
and recall the following formulae, for $w\in S_n$ and a simple reflection $s$
(see \cite[Subsection~2.1]{Ir90} or \cite{KL}):
\begin{equation}\label{eq:kl-mu}
\underline{H}_s\underline{H}_w=
\begin{cases}
(v+v^{-1}) \underline{H}_w, & sw<w;\\
\displaystyle\underline{H}_{sw}+
\sum_{x<w,sx<x}\mu(w,x)\underline{H}_x,
& sw>w. 
\end{cases}
\end{equation}
\begin{equation}\label{eq:kl-mu2}
\underline{H}_w\underline{H}_s=
\begin{cases}
(v+v^{-1}) \underline{H}_w, & ws<w;\\
\displaystyle\underline{H}_{ws}+
\sum_{x<w,xs<x}\mu(w,x)\underline{H}_x,
& ws>w. 
\end{cases}
\end{equation}
\begin{equation}\label{eq:kl-mu3}
\underline{H}_s\underline{\hat{H}}_w=
\begin{cases}
0, & sw>w;\\
\displaystyle
(v+v^{-1})\underline{\hat{H}}_w+
\underline{\hat{H}}_{sw}+
\sum_{x>w,sx>x}\mu(w,x)\underline{\hat{H}}_x,
& sw<w. 
\end{cases}
\end{equation}
\begin{equation}\label{eq:kl-mu4}
\underline{\hat{H}}_w\underline{H}_s=
\begin{cases}
0, & ws>w;\\
\displaystyle(v+v^{-1})\underline{\hat{H}}_w+
\underline{\hat{H}}_{ws}+
\sum_{x>w,xs>x}\mu(w,x)\underline{\hat{H}}_x,
& ws<w. 
\end{cases}
\end{equation}

Another useful property is the following: 

\begin{lemma}\label{lem-disjointsupports}
Let  $x,y\in S_n$ have disjoint supports, that is, 
any simple reflection $s$  that appears in a reduced 
expression of $x$ does not appear in a reduced expression of 
$y$ and vice versa. Then we have 
$\underline{{H}}_{xy}=\underline{{H}}_{x}\underline{{H}}_{y}$.
\end{lemma}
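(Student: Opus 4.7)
The plan is to verify that $\underline{H}_x\underline{H}_y$ satisfies the two properties in \eqref{eq-eqn125} characterizing $\underline{H}_{xy}$, namely bar-invariance and the triangular standard-basis expansion with coefficients in $v\mathbb{Z}[v]$; by the uniqueness in \eqref{eq-eqn125} this is enough.

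The first preliminary step is to establish $\ell(xy)=\ell(x)+\ell(y)$. Setting $K:=\mathrm{supp}(y)$, no simple reflection in $K$ is a right descent of $x$ (since right descents of $x$ lie in $\mathrm{supp}(x)$), so $x$ is the unique minimal-length representative of the left coset $xW_K$; as $y\in W_K$, the standard parabolic coset decomposition yields $\ell(xy)=\ell(x)+\ell(y)$. An induction on $\ell(b)$ then gives $H_aH_b=H_{ab}$ whenever $\mathrm{supp}(a)\cap\mathrm{supp}(b)=\emptyset$: for a right descent $s$ of $b$, write $b=b's$; length-additivity applied to $a$ and $b$ forces $ab's>ab'$, so $H_{ab'}H_s=H_{ab}$, which combined with the inductive hypothesis $H_aH_{b'}=H_{ab'}$ finishes the step.

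Now expand $\underline{H}_x\underline{H}_y$ via \eqref{eq-eqn125}. The subword property gives $\mathrm{supp}(a)\subseteq\mathrm{supp}(x)$ for $a\leq x$ and similarly for $b\leq y$, so $\mathrm{supp}(a)\cap\mathrm{supp}(b)=\emptyset$ and each cross-product is $H_aH_b=H_{ab}$ by the previous paragraph. The leading term is $H_xH_y=H_{xy}$. For $(a,b)\neq(x,y)$, concatenating reduced-expression subword witnesses inside $\underline{x}$ and $\underline{y}$ exhibits a reduced expression of $ab$ as a subword of the reduced expression $\underline{x}\cdot\underline{y}$ of $xy$, so $ab\leq xy$; since $\ell(ab)=\ell(a)+\ell(b)<\ell(x)+\ell(y)=\ell(xy)$, the inequality is strict. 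At least one of the coefficients $p_{x,a}$ or $p_{y,b}$ is then a genuine $v$-multiple, so the contribution to $H_{ab}$ lies in $v\mathbb{Z}[v]$, yielding
\[
\underline{H}_x\underline{H}_y=H_{xy}+\sum_{w<xy}q_w\,H_w\quad\text{with } q_w\in v\mathbb{Z}[v].
\]

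Bar-invariance is immediate since the bar involution is a ring endomorphism fixing each factor, and the uniqueness in \eqref{eq-eqn125} then forces $\underline{H}_x\underline{H}_y=\underline{H}_{xy}$. The main technical step is the length-additivity, which I handle by the parabolic coset decomposition rather than by a direct combinatorial analysis of braid and commutation moves at the boundary of the concatenated reduced expression.
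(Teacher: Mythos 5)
Your proposal is correct and follows essentially the same route as the paper: both verify that $\underline{H}_x\underline{H}_y$ is bar-invariant and has a unitriangular standard-basis expansion with off-diagonal coefficients in $v\mathbb{Z}[v]$, then invoke the uniqueness characterization of $\underline{H}_{xy}$. You merely spell out in more detail (length-additivity via the parabolic coset decomposition, $H_aH_b=H_{ab}$, and $ab<xy$) what the paper compresses into the assertion that multiplication bijects $\{u\leq x\}\times\{v\leq y\}$ onto $\{a\leq xy\}$.
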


\begin{proof}
Since both $\underline{{H}}_{x}$ and $\underline{{H}}_{y}$
are invariant under the bar involution, 
the element $\underline{{H}}_{x}\underline{{H}}_{y}$ is 
invariant under the bar involution as well. Since $x$ and $y$
have disjoint supports, the multiplication map $(u,v)\mapsto uv$
defines a bijection between the set
\begin{displaymath}
\{u\in S_n\,:\, u\leq x\}\times \{v\in S_n\,:\, v\leq y\} 
\end{displaymath}
and the set $\{a\in S_n\,:\, a\leq xy\}$. 
Hence, using \eqref{eq-eqn125}, we obtain
\begin{displaymath}
\underline{H}_x\underline{H}_y\in
H_xH_y+ \sum_{a<xy}v\mathbb{Z}[v]H_a. 
\end{displaymath}
This implies the claim of the lemma.
\end{proof}

\subsection{Category $\mathcal{O}$}\label{s2.3}

Consider the {\em Lie algebra} $\mathfrak{g}=\mathfrak{sl}_n(\mathbb{C})$.
It has the standard {\em triangular decomposition}
\begin{equation}\label{eq1}
\mathfrak{g}=\mathfrak{n}_-\oplus \mathfrak{h}\oplus \mathfrak{n}_+, 
\end{equation}
where $\mathfrak{h}$ is the {\em Cartan subalgebra} of all diagonal matrices
(with trace $0$), $\mathfrak{n}_+$ is the subalgebra of all upper
triangular matrices and $\mathfrak{n}_-$ is the subalgebra of all lower
triangular matrices. The group $S_n$ is isomorphic to the {\em Weyl group }
of the pair $(\mathfrak{g},\mathfrak{h})$.
Associated to the decomposition \eqref{eq1}, we have the
{\em BGG category $\mathcal{O}$} of $\mathfrak{g}$-modules, see \cite{BGG,Hu}.
This category $\mathcal{O}$ is an abelian length category with enough
projectives.

Consider the {\em principal block} $\mathcal{O}_0$ of $\mathcal{O}$.
It is equivalent to the module category over a finite dimensional
associative {\em Koszul algebra} $A$. Assuming that $A$ is basic, it is
defined uniquely up to isomorphism. Simple modules in $\mathcal{O}_0$
are indexed by the elements of $S_n$. For $w\in S_n$, we denote 
by $L_w$ the {\em simple object} in $\mathcal{O}_0$ given by the simple
highest weight $\mathfrak{g}$-module with highest weight 
$w\cdot 0$, where $\cdot$ is the dot-action of $S_n$ on
$\mathfrak{h}^*$. We denote by $\Delta_w$ the {\em Verma cover} of
$L_w$, by $P_w$ the {\em indecomposable projective} cover of
$L_w$ and by $I_w$ the {\em indecomposable injective} envelope of
$L_w$.

Being Koszul, the algebra $A$ is {\em positively graded} and we can
consider the corresponding category of finite dimensional 
graded $A$-modules, denoted ${}^\mathbb{Z}\mathcal{O}_0$. This is
a {\em $\mathbb{Z}$-graded lift} of $\mathcal{O}_0$. All structural
modules admit graded lifts and we denote the standard graded lifts
by the same symbols as the corresponding ungraded modules.

The category $\mathcal{O}_0$ is equipped with an action of the 
monoidal category $\mathscr{P}$ of {\em projective endofunctors}, see \cite{BG}.
The category $\mathscr{P}$ is additive and Krull-Schmidt.
Indecomposable objects in $\mathscr{P}$ are $\theta_w$, 
where $w\in S_n$, they satisfy $\theta_w P_e\cong P_w$.
The category $\mathscr{P}$ admits a graded lift
${}^\mathbb{Z}\mathscr{P}$ which acts on 
${}^\mathbb{Z}\mathcal{O}_0$ such that $\theta_w P_e\cong P_w$,
for $w\in S_n$. 

The Grothendieck group $\mathbf{Gr}[{}^\mathbb{Z}\mathcal{O}_0]$
is isomorphic to $\mathbf{H}_n$ by sending $[\Delta_w]$
to $H_w$, for $w\in S_n$. 
The split Grothendieck ring $\mathbf{Gr}_\oplus[{}^\mathbb{Z}\mathscr{P}]$
is isomorphic to $\mathbf{H}_n^{\mathrm{op}}$ by sending $[\theta_w]$
to $\underline{H}_w$, for $w\in S_n$. In both cases,
multiplication with $v$ corresponds to the shift of 
grading (more specifically, it decreases the grading by $1$).
Via these isomorphisms, the action of ${}^\mathbb{Z}\mathscr{P}$  on 
${}^\mathbb{Z}\mathcal{O}_0$ decategorifies to 
the right regular action of $\mathbf{H}_n$.

\subsection{Kostant's problem}\label{s2.4}

Consider the universal enveloping algebra $U(\mathfrak{g})$.
Given a $\mathfrak{g}$-module $M$, the representation map
embeds $U(\mathfrak{g})/\mathrm{Ann}_{U(\mathfrak{g})}(M)$
into the algebra $\mathrm{End}_\mathbb{C}(M)$ of all linear
endomorphisms of $M$. Let $\mathcal{L}(M,M)$ denote the
$U(\mathfrak{g})$-$U(\mathfrak{g})$-subbimodule of 
$\mathrm{End}_\mathbb{C}(M)$ consisting of all elements,
the adjoint action of $U(\mathfrak{g})$ on which is locally
finite. Then the image of 
$U(\mathfrak{g})/\mathrm{Ann}_{U(\mathfrak{g})}(M)$
belongs to $\mathcal{L}(M,M)$. The classical 
{\em Kostant's problem} for $M$, see \cite{Jo},
is the question whether the embedding
\begin{displaymath}
U(\mathfrak{g})/\mathrm{Ann}_{U(\mathfrak{g})}(M)
\hookrightarrow \mathcal{L}(M,M)
\end{displaymath}
is, in fact, an isomorphism. This problem is open, in general,
even for simple highest weight modules. However, many 
spacial cases are settled, see \cite{Jo,GJ,Ma25,Ma05,CM2,MMM,KMM,MSr,Ma2,MS,MS1}
and references therein. Notably, in \cite{CM1} it was shown that, 
asymptotically, the answer to Kostant's problem is negative for almost all 
simple modules in $\mathcal{O}_0$.

For a more efficient bookkeeping of the information on Kostant's problem,
let us  introduce the following notation. Denote by $\Sigma$ the set
\begin{displaymath}
\coprod_{n\geq 0}S_n 
\end{displaymath}
and consider the function $\mathbf{K}:\Sigma\to \{+,-\}$, defined as follows: 
for $w\in S_n$, we have $\mathbf{K}(w)=+$ if and only if the answer
to Kostant's problem for the $\mathfrak{sl}_n$-module $L_w$ 
is positive. To emphasize that $w\in S_n$, we will often write
$\mathbf{K}_n(w)$ instead of $\mathbf{K}(w)$.
In \cite{MS1} it was shown that the function $\mathbf{K}$
is constant on left Kazhdan-Lusztig cells. In particular, it is uniquely
determined by its values on all involutions. Therefore we may consider the set
\begin{displaymath}
\mathrm{Inv}=\coprod_{n\geq 0}\mathrm{Inv}_n 
\end{displaymath}
and the restriction of $\mathbf{K}$ to it. As already mentioned above, 
$\mathbf{RS}$ identifies $\mathbf{SYT}_n$ with $\mathrm{Inv}_n$.
Hence we will also consider 
\begin{displaymath}
\mathbf{SYT}=\coprod_{n\geq 0}\mathbf{SYT}_n 
\end{displaymath}
and transfer the function $\mathbf{K}$ from $\mathrm{Inv}$
to $\mathbf{SYT}$ via the identification provided by $\mathbf{RS}$.

\subsection{K{\aa}hrstr{\"o}m's conjecture}\label{s2.5}

As mentioned in \cite[Conjecture~1.2]{KMM}, Johan 
K{\aa}hrstr{\"o}m proposed the following conjecture:

\begin{conjecture}[K{\aa}hrstr{\"o}m]\label{Kh-conj}
For $w\in \mathrm{Inv}_n$, the following conditions are equivalent:
\begin{enumerate}[$($I$)$]
\item\label{kh1} $\mathbf{K}_n(w)=-$.
\item\label{kh2} There exist different $x,y\in S_n$
such that $\theta_xL_w\cong \theta_yL_w\neq 0$
in $\mathcal{O}_0$.
\item\label{kh3} There exist different $x,y\in S_n$
such that $\theta_xL_w\cong \theta_yL_w\neq 0$
in ${}^{\mathbb{Z}}\mathcal{O}_0$.
\item\label{kh4} There exist different $x,y\in S_n$
such that $\underline{\hat{H}}_w\underline{H}_x\vert_{{}_{v=1}}=
\underline{\hat{H}}_w\underline{H}_x\vert_{{}_{v=1}}\neq 0$.
\item\label{kh5} There exist different $x,y\in S_n$
such that $\underline{\hat{H}}_w\underline{H}_x=
\underline{\hat{H}}_w\underline{H}_x\neq 0$.
\end{enumerate}
\end{conjecture}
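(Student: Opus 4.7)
The plan is to organize the five conditions into two clusters: the combinatorial/categorical cluster \textup{(II)}--\textup{(V)}, which should be equivalent via the decategorification of the action of projective functors on simples, and condition \textup{(I)} itself, which must be bridged to this cluster. Establishing the internal equivalences among \textup{(II)}--\textup{(V)} should follow from standard categorical and Hecke-algebraic principles, while linking them to \textup{(I)} is the genuinely deep part of the conjecture.

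For the internal implications, the obvious ones are \textup{(V)}$\Rightarrow$\textup{(IV)} and \textup{(III)}$\Rightarrow$\textup{(II)}, the former by specializing $v=1$ and the latter by forgetting grading. The decategorification recalled in Subsection~\ref{s2.3} gives \textup{(III)}$\Leftrightarrow$\textup{(V)}: under the identification of $\mathbf{Gr}[{}^{\mathbb{Z}}\mathcal{O}_0]$ with $\mathbf{H}_n$, the class of $\theta_xL_w$ matches $\underline{\hat{H}}_w\underline{H}_x$ (after the conventions of Subsection~\ref{s2.3} are unwound), and two objects of ${}^{\mathbb{Z}}\mathcal{O}_0$ are isomorphic precisely when their graded Grothendieck classes agree. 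The remaining implications \textup{(II)}$\Rightarrow$\textup{(III)} and \textup{(IV)}$\Rightarrow$\textup{(V)} would rest on the fact that each $\theta_xL_w$ carries an essentially unique graded lift, so that an ungraded isomorphism, or an agreement of Grothendieck classes at $v=1$, pins down the grading shift to zero once one matches, say, the lowest-degree component corresponding to $L_w$ on both sides.

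The second task is to link this cluster to \textup{(I)}. Here I would reuse the double-centralizer philosophy from \cite{KMM}: the target $\mathcal{L}(L_w,L_w)$ admits a description, as a bimodule, in terms of the modules $\theta_xL_w$ for varying $x$, whereas the image of $U(\mathfrak{g})/\mathrm{Ann}(L_w)$ only ``sees'' the distinct isomorphism classes among the nonzero $\theta_xL_w$. Any coincidence $\theta_xL_w\cong\theta_yL_w\neq 0$ with $x\neq y$ therefore creates a gap between these two dimensions and forces $\mathbf{K}_n(w)=-$; this yields \textup{(II)}$\Rightarrow$\textup{(I)}, a direction which is essentially already contained in \cite{MS1,KMM}.

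The hard direction---and the main obstacle---is \textup{(I)}$\Rightarrow$\textup{(II)} (equivalently \textup{(I)}$\Rightarrow$\textup{(V)}). From the mere failure of surjectivity onto $\mathcal{L}(L_w,L_w)$ there is no known recipe for producing an explicit pair $x\neq y$ realizing an isomorphism $\theta_xL_w\cong\theta_yL_w$. A natural attempt would be to identify the cokernel of $U(\mathfrak{g})/\mathrm{Ann}(L_w)\hookrightarrow \mathcal{L}(L_w,L_w)$ with a subquotient counting repeated columns of the matrix $\bigl(\underline{\hat{H}}_w\underline{H}_x\bigr)_{x\in S_n}$ expanded in the dual Kazhdan--Lusztig basis, thereby reducing the problem to a purely Hecke-algebraic identity that one would then upgrade to an actual module isomorphism via a dimension count. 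This is precisely where I expect the argument to stall; it is presumably why the conjecture has remained open, and why the present paper sidesteps it, accumulating evidence instead through the classification of Kostant cuspidal involutions.
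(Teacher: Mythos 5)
The statement you are trying to prove is presented in the paper as an open conjecture (K{\aa}hrstr{\"o}m's conjecture, Conjecture~\ref{Kh-conj}); the paper contains no proof of it, only the definitions of the functions $\mathbf{Kh}^{(i)}$ and a pointer to \cite{CM3} for partial comparisons of the conditions. Your own text concedes that the direction (I)$\Rightarrow$(II) ``is precisely where I expect the argument to stall,'' so the proposal is, by its own admission, not a proof, and there is no argument in the paper to measure it against. The one implication you describe that is actually available, namely (II)$\Rightarrow$(I), does follow from \cite[Theorem~B]{KMM}: that theorem says $\mathbf{K}_n(w)=+$ if and only if $\mathbf{Kh}^{(II)}(w)=+$ \emph{and} every $\theta_vL_w$ is indecomposable or zero, so a coincidence $\theta_xL_w\cong\theta_yL_w\neq 0$ with $x\neq y$ forces $\mathbf{K}_n(w)=-$. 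The converse fails to follow from the same theorem precisely because negativity could instead be caused by some decomposable $\theta_vL_w$; this is the open core of the conjecture.

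More importantly, your claim that the cluster (II)--(V) is internally equivalent ``by standard categorical and Hecke-algebraic principles'' is not correct, and the individual steps rest on false general principles. For (V)$\Rightarrow$(III) you assert that two objects of ${}^{\mathbb{Z}}\mathcal{O}_0$ are isomorphic precisely when their graded Grothendieck classes agree; this is false in any non-semisimple length category, where non-isomorphic modules can share all graded composition multiplicities. For (IV)$\Rightarrow$(V) you would need to recover a Laurent polynomial from its value at $v=1$, which already fails for $v+v^{-1}$ versus $2$; there is no ``lowest-degree component'' argument that repairs this, since the modules $\theta_xL_w$ need not be indecomposable and the relevant classes are sums over many cells. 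For (II)$\Rightarrow$(III) one must fix the grading shift on a possibly decomposable, gradable module, which is again not formal. The only genuinely formal implications are (III)$\Rightarrow$(II), (III)$\Rightarrow$(V), (V)$\Rightarrow$(IV), (II)$\Rightarrow$(IV), together with (II)$\Rightarrow$(I) as above. The fact that the remaining arrows are not formal is exactly why the conjecture is stated with five separate conditions rather than two, and why it remains open.
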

For $i\in\{II,III,IV;V\}$, we denote by $\mathbf{Kh}^{(i)}$
the map from $\mathrm{Inv}$ (or $\mathbf{SYT}$) 
to $\{+,-\}$ such that 
$\mathbf{Kh}^{(i)}(w)=-$ if and only if the corresponding
$i$-th condition in Conjecture~\ref{Kh-conj} is satisfied.
Then Conjecture~\ref{Kh-conj} amounts to proving that
\begin{displaymath}
\mathbf{K}=\mathbf{Kh}^{(II)}=
\mathbf{Kh}^{(III)}=\mathbf{Kh}^{(IV)}=\mathbf{Kh}^{(V)}.
\end{displaymath}
We refer to \cite{CM3} for various discussions and results 
comparing the conditions appearing in Conjecture~\ref{Kh-conj}.

\section{Propagation of Kostant negative consecutive patterns}\label{s3}

\subsection{Main result}\label{s3.1}

\begin{theorem}\label{thm-main}
Let $w\in S_n$ be a Kostant negative element.
For $k\geq n$, let $x\in S_k$ be some element that 
consecutively contains the pattern $w$. Then 
$x$ is Kostant negative.
\end{theorem}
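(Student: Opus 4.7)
The plan is to propagate Kostant negativity from $w$ to $x$ by induction on $k - n$, reducing the problem to an atomic extension step and then exploiting the interplay between Category $\mathcal{O}$ and the Kazhdan--Lusztig combinatorics of Section~\ref{s2.2}. For the induction, given $x \in S_k$ consecutively containing $w$ at positions $i, \dots, i+n-1$, and using $k > n$, at least one of positions $1$ or $k$ lies outside the window; deleting that endpoint from the one-line notation of $x$ (and relabeling values) produces $x' \in S_{k-1}$ which still consecutively contains $w$. By the inductive hypothesis $x'$ is Kostant negative, so it suffices to establish the atomic step: if $y \in S_m$ is Kostant negative and $z \in S_{m+1}$ is obtained from $y$ by inserting a single new entry at the leftmost or rightmost end of the one-line notation with any new value $v \in \{1, \dots, m+1\}$ (followed by relabeling), then $z$ is Kostant negative.

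I would split this atomic step into boundary and intermediate subcases. In the boundary subcases (insertion of $v = m + 1$ at the right, or $v = 1$ at the left), the resulting $z$ is a genuine parabolic embedding $S_m \hookrightarrow S_{m+1}$ fixing one coordinate; the preservation of Kostant's answer here should follow from a direct comparison of $\mathcal{L}(L_z, L_z)$ and $\mathcal{L}(L_y, L_y)$ against the corresponding images of $U(\mathfrak{g})/\mathrm{Ann}$, encoded via the Hecke-algebraic formulas of Section~\ref{s2.2}. The intermediate subcases (insertion with $1 < v < m+1$) would be handled via the left-cell invariance of $\mathbf{K}$ established in \cite{MS1}: using the Robinson--Schensted correspondence and standard Knuth-relation computations, one identifies the left cell of $z$ with one whose Duflo involution either falls into a boundary configuration or is already addressed by the inductive hypothesis, and Kostant negativity transfers accordingly.

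The main obstacle is the boundary subcase, namely showing that Kostant's problem is genuinely preserved under the basic parabolic embedding $y \mapsto z$. While this is morally expected, simultaneously tracking $\mathcal{L}(L, L)$ and the image of $U(\mathfrak{g})/\mathrm{Ann}(L)$ therein under parabolic induction is nontrivial. I expect this step to lean on the Hecke-algebraic machinery of Section~\ref{s2.2} together with the known implication $\mathrm{(II)} \Rightarrow \mathrm{(I)}$ from Conjecture~\ref{Kh-conj}: specifically, by converting an abstract Kostant-negativity witness for $y$ into a pair of distinct elements $a, b \in S_{m+1}$ satisfying $\theta_a L_z \cong \theta_b L_z \neq 0$, using the compatibility of projective functors on $\mathcal{O}(\mathfrak{sl}_m)$ and $\mathcal{O}(\mathfrak{sl}_{m+1})$ with the parabolic embedding.
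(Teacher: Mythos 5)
Your reduction to an ``atomic'' single-entry insertion at one end of the one-line notation is structurally sound, but the atomic step itself --- which carries the entire content of the theorem --- is not established, and the two mechanisms you propose for it do not work as described. For the intermediate insertions ($1<v<m+1$), left-cell invariance cannot do what you ask of it: the left cell of $z$ is determined by its recording tableau, different insertion values $v$ can land in different left cells and even in different two-sided cells (appending the value $1$ versus the value $m+1$ at the right end of $21\in S_2$ already produces elements of shapes $(1,1,1)$ and $(2,1)$), and there is no argument that the left cell of $z$ contains an element ``in a boundary configuration''; the phrase ``already addressed by the inductive hypothesis'' is circular, since your induction runs on $k-n$ while the atomic step lives entirely inside $S_{m+1}$. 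For the boundary case, which you rightly flag as the main obstacle, the sketch stops exactly where the difficulty begins: as the paper emphasizes after Corollary~\ref{cor-s3.2}, the parabolic embedding of Hecke algebras does \emph{not} send dual Kazhdan--Lusztig basis elements to dual Kazhdan--Lusztig basis elements, so no ``direct comparison'' via the formulas of Section~\ref{s2.2} is available.

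The missing idea is the Serre subquotient equivalence of \cite[Theorem~37]{CMZ}. Writing $x=y\varphi_{k,n,i}(w)$ with $y$ a shortest coset representative, one obtains an equivalence between $\mathcal{O}_0$ for $\mathfrak{sl}_n$ and a subquotient $\mathcal{X}/\mathcal{Y}$ of $\mathcal{O}_0$ for $\mathfrak{sl}_k$ sending $L_z$ to $L_{y\varphi_{k,n,i}(z)}$ and intertwining $\theta_f$ with $\theta_{\varphi_{k,n,i}(f)}$; this treats all positions $i$, all inserted values and all $k$ uniformly, with no induction and no case distinction. Two further points would still need attention in your plan: first, by \cite[Theorem~B]{KMM} a witness for Kostant negativity of $w$ is \emph{either} a nonzero decomposable $\theta_vL_w$ \emph{or} an isomorphic pair $\theta_aL_w\cong\theta_bL_w\neq 0$, and your ``conversion'' addresses only the second type --- the paper's contradiction setup (assuming $x$ Kostant positive forces all $\theta_fL_w$ to be indecomposable or zero) is precisely what rules out the first type; second, an isomorphism $\theta_{\varphi_{k,n,i}(a)}L_x\cong\theta_{\varphi_{k,n,i}(b)}L_x$ produced in $\mathcal{X}/\mathcal{Y}$ must be lifted to $\mathcal{O}_0$, which requires the adjunction argument showing that no top or socle constituent of these modules lies in $\mathcal{Y}$.
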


As an immediate corollary of Theorem~\ref{thm-main}, we have:

\begin{corollary}\label{cor-s3.2}
Let $w\in S_n$ be Kostant negative.
For $k\geq n$ and $i\leq k-n$, consider the
embedding $\varphi_{k,n,i}$ of $S_n$ to $S_k$
which sends the transposition $(s,s+1)\in S_n$
to $(s+i,s+i+1)\in S_k$.
Then $\varphi_{k,n,i}(w)\in S_k$ is Kostant negative.
\end{corollary}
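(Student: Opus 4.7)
The plan is to reduce the corollary directly to Theorem~\ref{thm-main} by showing that $\varphi_{k,n,i}(w) \in S_k$ consecutively contains the pattern $w$. Once this is established, setting $x := \varphi_{k,n,i}(w)$ in the theorem yields immediately that $\varphi_{k,n,i}(w)$ is Kostant negative. There is essentially no genuine obstacle in the argument; the only step that requires a small check is the well-definedness of $\varphi_{k,n,i}$ as a group homomorphism, which follows from the condition $i \leq k-n$: this ensures that $s_{i+1}, \ldots, s_{i+n-1}$ all lie inside $S_k$ and satisfy exactly the same braid and commutation relations among themselves as $s_1, \ldots, s_{n-1}$ do in $S_n$.

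To carry out this plan, I would first write down $\varphi_{k,n,i}(w)$ in one-line notation. Since $\varphi_{k,n,i}$ uses only the simple reflections with indices in $\{i+1, i+2, \ldots, i+n-1\}$, the image fixes the positions $1, 2, \ldots, i$ and $i+n+1, \ldots, k$ pointwise, while on the block of positions $i+1, i+2, \ldots, i+n$ it acts as the copy of $w$ shifted by $i$. In one-line notation this reads
\begin{displaymath}
1 \, 2 \, \cdots \, i \, (w(1)+i)(w(2)+i)\cdots(w(n)+i) \, (i+n+1)(i+n+2) \cdots k.
\end{displaymath}
The window of $n$ consecutive positions $i+1, i+2, \ldots, i+n$ contains exactly the values $w(1)+i, \ldots, w(n)+i$, which exhaust the consecutive integer range $\{i+1, i+2, \ldots, i+n\}$ and have the same relative order as $w(1), w(2), \ldots, w(n)$. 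This is precisely the definition of a consecutive occurrence of the pattern $w$ in $\varphi_{k,n,i}(w)$, and Theorem~\ref{thm-main} then completes the argument.
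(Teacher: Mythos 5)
Your proposal is correct and is exactly the reduction the paper intends: the paper states the corollary as immediate from Theorem~\ref{thm-main} without writing out a proof, and your verification that $\varphi_{k,n,i}(w)$ in one-line notation is $1\,2\cdots i\,(w(1)+i)\cdots(w(n)+i)\,(i+n+1)\cdots k$, hence consecutively contains the pattern $w$ in the window of positions $i+1,\dots,i+n$, is precisely the omitted step. The well-definedness check via $i\leq k-n$ is a sensible addition.
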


We note that the assertion of Corollary~\ref{cor-s3.2}
is not trivial as the natural embedding of the 
Hecke algebra for $S_n$ into the Hecke algebra for
$S_k$ corresponding to $\varphi_{k,n,i}$ does not 
send the elements of the dual Kazhdan-Lusztig basis
for $\mathbf{H}_n$ to the elements of the dual 
Kazhdan-Lusztig basis for $\mathbf{H}_k$.
In fact, we suspected for a long time that 
the assertion of Corollary~\ref{cor-s3.2} holds,
but it is only now that we figured out how to prove it.

\subsection{Proof of Theorem~\ref{thm-main}}\label{s3.2}

Recall that, by \cite[Theorem~B]{KMM}, for $u\in S_n$, the 
condition $\mathbf{K}_n(u)=+$ is satisfied if and only if we have
$\mathbf{Kh}^{(II)}(u)=+$ and, for any $v\in S_n$,
the module $\theta_v L_u$ is either indecomposable or zero.

Let $w\in S_n$ be such that $\mathbf{K}_n(w)=-$.
For $k\geq n$, let $x\in S_k$ be some element that 
consecutively contains the pattern $w$, starting from 
position $i$. Assume, towards a contradiction, that 
$\mathbf{K}_k(x)=+$. 

Consider the embedding $\varphi_{k,n,i}$ of $S_n$ to $S_k$
which sends the transposition $(s,s+1)\in S_n$
to $(s+i,s+i+1)\in S_k$. Write $x=y\varphi_{k,n,i}(w)$,
such that $\ell(x)=\ell(w)+\ell(y)$. Then 
$y$ is a shortest representative in its coset
from $S_k/\varphi_{k,n,i}(S_n)$.

Let $\mathcal{X}$ be the Serre subcategory of 
$\mathcal{O}_0$ for $\mathfrak{sl}_k$ generated
by all simples $L_r$, where $r\geq y$
(with respect to the Bruhat order).
Let $\mathcal{Y}$ be the Serre subcategory of 
$\mathcal{X}$ generated by all simples $L_r$, 
for which $r\not\in y\varphi_{k,n,i}(S_n)$.

By \cite[Theorem~37]{CMZ}, there is an equivalence between
$\mathcal{O}_0$ for $\mathfrak{sl}_n$
and the Serre quotient $\mathcal{X}/\mathcal{Y}$
which maps $L_z$, for $z\in S_n$, to
$L_{y\varphi_{k,n,i}(z)}$ and, moreover,
which intertwines the action of $\theta_f$, for
$f\in S_n$, on $\mathcal{O}_0$ for $\mathfrak{sl}_n$
with the action of $\theta_{\varphi_{k,n,i}(f)}$
on $\mathcal{X}/\mathcal{Y}$.

Since we assume $\mathbf{K}_k(x)=+$, it follows that, 
for any $v\in S_k$,
the module $\theta_v L_x$ is either indecomposable or zero.
In particular, for any $f\in S_n$, 
the module $\theta_{\varphi_{k,n,i}(f)} L_x$ is either 
indecomposable or zero. In particular, its endomorphism
algebra is either local or zero. By the above 
equivalence, the endomorphism algebra of 
$\theta_f L_w$ is either local or zero.
Therefore $\theta_f L_w$ is either indecomposable or zero.

From \cite[Theorem~B]{KMM}, we thus deduce that
$\mathbf{Kh}^{(II)}(w)=-$, that is, there exist
different $a,b\in S_n$ such that $\theta_a L_w\cong \theta_b L_w\neq 0$.
Via the above equivalence, we obtain that 
$\theta_{\varphi_{k,n,i}(a)} L_x\cong \theta_{\varphi_{k,n,i}(b)} L_x\neq 0$.
Note that, so far, this is an isomorphism in $\mathcal{X}/\mathcal{Y}$,
not in $\mathcal{O}_0$.

We claim that the isomorphism 
$\theta_{\varphi_{k,n,i}(a)} L_x\cong \theta_{\varphi_{k,n,i}(b)} L_x\neq 0$
in $\mathcal{X}/\mathcal{Y}$ implies the corresponding isomorphism
in $\mathcal{O}_0$. For this, it is enough to show that any 
$L_c$ appearing in the top or socle of
$\theta_{\varphi_{k,n,i}(a)} L_x$
(or $\theta_{\varphi_{k,n,i}(b)} L_x$) in $\mathcal{O}_0$
is non-zero in $\mathcal{X}/\mathcal{Y}$, that is, 
$L_c\not \in \mathcal{Y}$. This follows by adjunction as
\begin{displaymath}
\mathrm{Hom}_{\mathcal{O}}(\theta_{\varphi_{k,n,i}(a)} L_x,L_c)\cong
\mathrm{Hom}_{\mathcal{O}}(L_x,\theta_{\varphi_{k,n,i}(a^{-1})} L_c).
\end{displaymath}
If $L_c \in \mathcal{Y}$, then each simple constituent in
the top or the socle of
$\theta_{\varphi_{k,n,i}(a^{-1})} L_c$
is in $\mathcal{Y}$ as well. In particular, 
$\mathrm{Hom}_{\mathcal{O}}(L_x,\theta_{\varphi_{k,n,i}(a^{-1})} L_c)=0$.
Consequently, we also have
$\mathrm{Hom}_{\mathcal{O}}(\theta_{\varphi_{k,n,i}(a)} L_x,L_c)=0$.
Similarly one establishes both the equality 
$\mathrm{Hom}_{\mathcal{O}}(L_c,\theta_{\varphi_{k,n,i}(a)} L_x)=0$
and the two corresponding results for $\theta_{\varphi_{k,n,i}(b)} L_x$.
From this, it follows that 
\begin{displaymath}
\mathrm{Hom}_{\mathcal{O}}(\theta_{\varphi_{k,n,i}(a)} L_x,
\theta_{\varphi_{k,n,i}(b)} L_x)\cong
\mathrm{Hom}_{\mathcal{X}/\mathcal{Y}}(\theta_{\varphi_{k,n,i}(a)} L_x,
\theta_{\varphi_{k,n,i}(b)} L_x)
\end{displaymath}
by the definition of a Serre quotient.
Therefore an isomorphis between $\theta_{\varphi_{k,n,i}(a)} L_x$
and $\theta_{\varphi_{k,n,i}(b)} L_x$ in
$\mathcal{X}/\mathcal{Y}$ lifts to an isomorphism between
these two modules in $\mathcal{O}_0$.
This implies that $\mathbf{Kh}^{(II)}(x)=-$.
The obtained contradiction completes our proof 
of Theorem~\ref{thm-main}.

\section{Kostant cuspidal permutations}\label{s4}

\subsection{Definition and first examples}\label{s4.1}

Theorem~\ref{thm-main} motivates the following definition
(our terminology is inspired by \cite{Mat}).

\begin{definition}\label{def-s4.1-1}
An element $w\in S_n$ will be  called {\em Kostant cuspidal}
provided that we have $\mathbf{K}_n(w)=-$ and, for any $k<n$
and  any $p\in S_k$, if $w$ consecutively contains
$p$, then $\mathbf{K}_k(p)=+$.
\end{definition}

\begin{example}\label{ex-s4.1-2}
{\rm
The smallest Kostant negative permutations
$2143$ and $3142$ in $S_4$
(see \cite{MS,KaM}) are, naturally, Kostant cuspidal.
}
\end{example}

\subsection{Left cell invariance}\label{s4.15}

\begin{proposition}\label{ex-s4.15-1}
Let $x,y\in S_n$ be such that $x\sim_L y$.
Then $x$ is Kostant cuspidal if and only if $y$ is.
\end{proposition}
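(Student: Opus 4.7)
The plan is to combine two ingredients: the left-cell invariance of $\mathbf{K}$ from \cite{MS1}, and the description of the relation $\sim_L$ on $S_n$ via dual Knuth (Vogan star) moves on one-line notation. The first ingredient immediately yields $\mathbf{K}_n(x)=\mathbf{K}_n(y)$, which transfers the ``Kostant negative'' clause of Definition~\ref{def-s4.1-1} between $x$ and $y$.

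For the minimality clause, I would establish the following stronger statement: for any contiguous window of positions $i+1,\dots,i+k$ in $\underline{n}$, the standardized consecutive pattern of $w\in S_n$ at that window, viewed as an element of $S_k$, has a $\sim_L$-class in $S_k$ that depends only on the $\sim_L$-class of $w$ in $S_n$. Granting this, left cell invariance of $\mathbf{K}$ applied in rank $k$ shows that the Kostant value of every consecutive pattern is itself a left cell invariant of the ambient permutation; in particular, at each window, $x$ and $y$ have consecutive patterns of equal Kostant value, and the cuspidality condition transfers between $x$ and $y$.

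To prove the stronger statement, it suffices to treat a single dual Knuth move in $S_n$, which swaps the positions of two consecutive values $a$ and $a+1$ and is witnessed by a third value in $\{a-1,a+2\}$ whose position lies strictly between them. A short case analysis on where the positions of $a$ and $a+1$ fall relative to the window handles all cases: if both are outside the window, the pattern is trivially unchanged; if exactly one is inside (the ``straddling'' case), a direct rank count shows that the standardization of the window is again unchanged, since the multiset of window values changes only by swapping $a$ for $a+1$ or vice versa, and no other window value lies in $\{a,a+1\}$; and if both are inside, then contiguity of the window forces the witness, whose position is between them, to also be inside, and the move restricts to an honest dual Knuth move on the standardized pattern, preserving its left cell in $S_k$.

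The main technical point, which I do not view as a serious obstacle, is precisely the contiguity-of-window argument in the last subcase, which rules out the potentially awkward scenario where both swapped values are inside but the witness is outside. It is exactly this observation that makes the $\sim_L$-class of every consecutive pattern a left cell invariant of the ambient permutation, and hence makes Kostant cuspidality a left cell invariant.
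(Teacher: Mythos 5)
Your proposal is correct, but it proves the proposition by a genuinely different and more combinatorial route than the paper. The paper never compares the left cells of the consecutive patterns themselves: it characterizes Kostant negativity of a pattern $w$ of $x$ via \cite[Theorem~B]{KMM} (either some $\theta_u L_w$ is decomposable, or $\theta_u L_w\cong\theta_v L_w\neq 0$ for $u\neq v$), lifts these functorial witnesses from $L_w$ to $L_x$ through the Serre quotient equivalence of \cite[Theorem~37]{CMZ}, transports them across the left cell from $L_x$ to $L_y$ using \cite[Proposition~2]{CMZ} and \cite[Theorem~18]{MS1}, and then pushes them back down to the pattern $w'$ of $y$ at the same window. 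Your argument instead isolates a purely combinatorial statement that the paper does not prove: since $x\sim_L y$ is equivalent to $Q_x=Q_y$, hence to dual Knuth equivalence, and since a single dual Knuth move either fixes the standardized pattern at a contiguous window or induces a dual Knuth move on it (your three-case analysis is sound, and the contiguity observation correctly forces the witness into the window in the last case), the $\sim_L$-class of every consecutive pattern is itself a left cell invariant; combining this with the left cell invariance of $\mathbf{K}$ from \cite{MS1} applied in rank $k$ finishes the proof. Your route is more elementary and self-contained (it needs only \cite{MS1} plus standard RSK combinatorics, not the category $\mathcal{O}$ machinery), and it establishes a strictly stronger and reusable fact, namely that $x\sim_L y$ forces the patterns of $x$ and $y$ at each window to be left equivalent, not merely to have equal Kostant value; the paper's route, by contrast, reuses the functorial toolkit already set up for Theorem~\ref{thm-main} and avoids invoking the dual Knuth description of left cells.
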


\begin{proof}
Let $x,y\in S_n$ be such that $x\sim_L y$.
Assume that $x$ is Kostant negative, then 
$y$ is Kostant negative as well, by \cite[Theorem~61]{MS1}.
Now let us assume that $x$ is not Kostant 
cuspidal. This means that there is $k<n$
and a Kostant negative $w\in S_k$ such that 
$x$ consecutively contains the pattern $w$
starting from some position, say $i$.

By \cite[Theorem~B]{KMM}, the fact that $w$ is Kostant negative
means that at least one of the following two options
takes place:
\begin{itemize}
\item there is $u\in S_k$ such that 
$\theta_u L_w\neq 0$ is decomposable;
\item there are $u,v\in S_k$ such that
$u\neq v$ and $\theta_u L_w\cong \theta_v L_w \neq 0$.
\end{itemize}

Assume first that there is $u\in S_k$ such that 
$\theta_u L_w\neq 0$ is decomposable.
Consider the homomorphism $\varphi:S_k\to S_n$ that
maps each simple transposition $(a,a+1)$ of $S_k$
to the simple transposition $(a+i-1,a+i)$ of $S_n$.
Then, by \cite[Theorem~37]{CMZ}, we have that 
$\theta_{\varphi(u)}L_x\neq 0$ is decomposable.
Since $x\sim_L y$, by \cite[Proposition~2]{CMZ}, it follows that 
$\theta_{\varphi(u)}L_y\neq 0$ is decomposable as well.
Let $w'\in S_k$ be the length $k$ consecutive
pattern of $y$ which starts at position $i$.
Then, by \cite[Theorem~37]{CMZ}, we have that $\theta_u L_{w'}\neq 0$ 
is decomposable. Therefore $w'$ is Kostant negative
by \cite[Theorem~B]{KMM} implying that $y$ is not Kostant cuspidal.

Now, assume that there are $u,v\in S_k$ such that
$u\neq v$ and $\theta_u L_w\cong \theta_v L_w \neq 0$.
Using \cite[Theorem~37]{CMZ} and the argument in the 
proof of Theorem~\ref{thm-main}, it follows that 
$\theta_{\varphi(u)}L_x\cong \theta_{\varphi(v)}L_x\neq 0$.
Since $x\sim_L y$, this implies 
$\theta_{\varphi(u)}L_y\cong \theta_{\varphi(v)}L_y\neq 0$
by \cite[Theorem~18]{MS1}.
Then, just like in the previous paragraph, this further
implies that 
$\theta_u L_{w'}\cong \theta_v L_{w'} \neq 0$, where  
$w'\in S_k$ is the length $k$ consecutive
pattern of $y$ which starts at position $i$.
This means that $w'$ is Kostant negative
by \cite[Theorem~B]{KMM} implying that $y$ is not Kostant cuspidal.
This completes the proof.
\end{proof}

As a direct consequence of Proposition~\ref{ex-s4.15-1},
to understand all Kostant cuspidal permutations, it is 
enough to understand all Kostant cuspidal involutions
(equivalently, all Kostant cuspidal standard Young tableaux).

\subsection{Small ranks}\label{s4.17}

As already mentioned in Example~\ref{ex-s4.1-2},
the first Kostant negative, and hence also Kostant cuspidal, 
involution is $2143\in S_4$, see \cite{KaM}. The
corresponding standard Young tableau is
\begin{displaymath}
\begin{ytableau}
\scriptstyle{1}&
\scriptstyle{3}\\
\scriptstyle{2}&
\scriptstyle{4}
\end{ytableau} 
\end{displaymath}
In $S_5$, there are exactly five Kostant negative involutions,
namely $21435$, $13254$, $21543$, $32154$ and $14325$
(see \cite{KaM}).
We see that the first four consecutively contain the pattern
$2143$ and hence they are not Kostant cuspidal. The last 
involution $14325$ does not contain any proper Kostant negative
consecutive pattern and hence is Kostant cuspidal. The
corresponding standard Young tableau is
\begin{displaymath}
\begin{ytableau}
\scriptstyle{1}&
\scriptstyle{2}&
\scriptstyle{5}\\
\scriptstyle{3}\\
\scriptstyle{4}
\end{ytableau} 
\end{displaymath}
In $S_6$, there are exactly twenty five Kostant negative involutions
(see \cite{KMM}), namely:
\begin{equation}\label{eqeqnnw2}
\begin{array}{ccccc}
124365,\quad &  
132546,\quad &  
214356,\quad &  
125436,\quad &  
143256,\\       
214365,\quad &  
132654,\quad &  
143265,\quad &  
215436,\quad &  
321546\\        
{\color{blue}341265},\quad &  
{\color{blue}215634},\quad &  
{\color{blue}154326},\quad &  
216453,\quad &  
321654\\        
423165,\quad &  
351624,\quad &  
216543,\quad &  
432165,\quad &  
164352\\        
{\color{blue}426153},\quad &  
524316,\quad &  
463152,\quad &  
526413,\quad &  
632541          
\end{array}
\end{equation}
Kostant cuspidal involutions are highlighted in {\color{blue}blue}.
The corresponding standard Young tableaux are:
\begin{displaymath}
\begin{ytableau}
\scriptstyle{1}&
\scriptstyle{2}&
\scriptstyle{5}\\
\scriptstyle{3}&
\scriptstyle{4}&
\scriptstyle{6}
\end{ytableau}, \quad 
\begin{ytableau}
\scriptstyle{1}&
\scriptstyle{3}&
\scriptstyle{4}\\
\scriptstyle{2}&
\scriptstyle{5}&
\scriptstyle{6}
\end{ytableau}, \quad 
\begin{ytableau}
\scriptstyle{1}&
\scriptstyle{2}&
\scriptstyle{6}\\
\scriptstyle{3}\\
\scriptstyle{4}\\
\scriptstyle{5}
\end{ytableau}, \quad 
\begin{ytableau}
\scriptstyle{1}&
\scriptstyle{3}\\
\scriptstyle{2}&
\scriptstyle{5}\\
\scriptstyle{4}&
\scriptstyle{6}
\end{ytableau}
\end{displaymath}
The element $154326$ belongs to the infinite series 
discussed later in Theorem~\ref{thm-cusp1}.
The element $426153$ belongs to the infinite series 
discussed later in Theorem~\ref{thm-cusp3}.

\subsection{Type $A_6$}\label{s4.55}

The last case when we know a complete answer to Kostant's
problem is type $A_6$, that is in $S_7$, see \cite{CM1}. In this case there are
exactly $107$ Kostant negative involutions. Based on this
explicit result,
a GAP3 computation produced the following list of nine
cuspidal involutions in type $A_6$ (using the one-line notation):
\begin{displaymath}
1462537,\quad
1536247,\quad
{\color{blue}3214765},\quad
4531276,\quad
3614725,
\end{displaymath}
\begin{displaymath}
2167534,\quad
{\color{violet}1654327},\quad
{\color{teal}4271563},\quad
{\color{teal}5237164}.
\end{displaymath}
The corresponding 
standard Young tableaux are:
\begin{displaymath}
\begin{ytableau}
\scriptstyle{1}&
\scriptstyle{2}&
\scriptstyle{3}&
\scriptstyle{7}\\
\scriptstyle{4}&
\scriptstyle{5}\\
\scriptstyle{6}
\end{ytableau}, \quad 
\begin{ytableau}
\scriptstyle{1}&
\scriptstyle{2}&
\scriptstyle{4}&
\scriptstyle{7}\\
\scriptstyle{3}&
\scriptstyle{6}\\
\scriptstyle{5}
\end{ytableau}, \quad 
\begin{ytableau}
\scriptstyle{1}&
\scriptstyle{4}&
\scriptstyle{5}\\
\scriptstyle{2}&
\scriptstyle{6}\\
\scriptstyle{3}&
\scriptstyle{7}
\end{ytableau}, \quad 
\begin{ytableau}
\scriptstyle{1}&
\scriptstyle{2}&
\scriptstyle{6}\\
\scriptstyle{3}&
\scriptstyle{5}&
\scriptstyle{7}\\
\scriptstyle{4}
\end{ytableau},\quad 
\begin{ytableau}
\scriptstyle{1}&
\scriptstyle{2}&
\scriptstyle{5}\\
\scriptstyle{3}&
\scriptstyle{4}&
\scriptstyle{7}\\
\scriptstyle{6}
\end{ytableau}, 
\end{displaymath}
\begin{displaymath}
\begin{ytableau}
\scriptstyle{1}&
\scriptstyle{3}&
\scriptstyle{4}\\
\scriptstyle{2}&
\scriptstyle{5}&
\scriptstyle{7}\\
\scriptstyle{6}
\end{ytableau}, \quad 
\begin{ytableau}
\scriptstyle{1}&
\scriptstyle{2}&
\scriptstyle{7}\\
\scriptstyle{3}\\
\scriptstyle{4}\\
\scriptstyle{5}\\
\scriptstyle{6}
\end{ytableau}, \quad 
\begin{ytableau}
\scriptstyle{1}&
\scriptstyle{3}&
\scriptstyle{6}\\
\scriptstyle{2}&
\scriptstyle{5}\\
\scriptstyle{4}&
\scriptstyle{7}
\end{ytableau}, \quad 
\begin{ytableau}
\scriptstyle{1}&
\scriptstyle{3}&
\scriptstyle{4}\\
\scriptstyle{2}&
\scriptstyle{6}\\
\scriptstyle{5}&
\scriptstyle{7}
\end{ytableau}. 
\end{displaymath}
Here 
the {\color{violet}violet} element belongs to the infinite series 
discussed later in Theorem~\ref{thm-cusp1}, the
{\color{blue}blue} element belongs to the infinite series 
discussed later in Theorem~\ref{thm-cusp2} and the two
{\color{teal}teal} elements belong to the infinite series 
discussed later in Theorem~\ref{thm-cusp3}.

\section{Kostant cuspidal fully commutative involutions}\label{s7}

\subsection{Fully commutative permutations}\label{s7.1}

Recall that an element $w\in S_n$ is called {\em fully commutative}
provided that any reduced expression of $w$ can be transformed into
any other reduced expression of $w$ by using only the commutativity
relations between simple reflections. Alternatively, $w$
is fully commutative if it is {\em $321$-avoiding} or
{\em short-braid avoiding}. In terms of the Robinson-Schensted 
correspondence, $w$ is fully commutative if and only if 
the shape of $P_w$ has at most two rows.

For $i\in\{1,2,\dots,n\}$ and $j\in\{0,1,\dots, \min(i-1,n-i-1)\}$,
denote by $\sigma_{i,j}$ the element
\begin{displaymath}
s_i(s_{i+1}s_{i-1})(s_{i+2}s_is_{i-2})\dots
(s_{i-j}s_{i-j+2}\dots s_{i+j})\dots  (s_{i+1}s_{i-1})s_i.
\end{displaymath}
Note that $\sigma_{i,0}=s_i$. The element $\sigma_{i,j}$
is an involution and it can be written as a product of 
$j+1$ transpositions as follows:
\begin{displaymath}
\sigma_{i,j}=(i-j,i+1)(i-j+1,i+2)\dots (i,i+j+1). 
\end{displaymath}
Following \cite{MMM}, we call $\sigma_{i,j}$ a {\em special involution}.
The set of points which $\sigma_{i,j}$ displaces is 
$\{i-j,i-j+1,\dots, i+j+1\}$ and it is called the {\em support} of
$\sigma_{i,j}$. The set $\{i-j-1,i-j,i-j+1,\dots, i+j+2\}$ is called
the {\em extended support} of $\sigma_{i,j}$. Two elements 
$\sigma_{i,j}$ and $\sigma_{i',j'}$ are called {\em distant}
provided that their extended support intersect in at most one element.

\subsection{Kostant positive fully commutative involutions}\label{s7.2}

The following is the main result of \cite{MMM} (see \cite[Theorem~5.1]{MMM}):

\begin{theorem}\label{thm-s7.2-1}
A fully commutative involution is Kostant positive if and only if
it is a product of pairwise-distant special involutions.
\end{theorem}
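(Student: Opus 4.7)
The plan is to establish both implications through the characterization $\mathbf{K}(w)=\mathbf{Kh}^{(II)}(w)$ supplied by \cite[Theorem~B]{KMM}, exploiting the fact that for fully commutative involutions the Kazhdan-Lusztig polynomials are essentially trivial, the left cells admit an explicit description via two-row standard Young tableaux, and the projective functor action on $L_w$ is controlled at the decategorified level by right multiplication with KL basis elements on $\underline{\hat{H}}_w$.

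For the backward direction, I would first handle a single special involution $\sigma_{i,j}$. Its left cell has a transparent two-row description, and one can check directly that the assignment $x\mapsto \underline{\hat{H}}_{\sigma_{i,j}}\underline{H}_x$ is injective on its support and produces a single dual KL basis element (up to a Laurent monomial in $v$) whenever it is nonzero; this yields $\mathbf{Kh}^{(V)}(\sigma_{i,j})=+$. Then, for $w=\sigma_{i_1,j_1}\cdots \sigma_{i_r,j_r}$ with pairwise-distant factors, I would argue that the relevant KL computation factorizes. When the extended supports are mutually disjoint, Lemma~\ref{lem-disjointsupports} gives $\underline{H}_w=\prod_k\underline{H}_{\sigma_{i_k,j_k}}$ and, by a parallel block-by-block support analysis, the same factorization survives for $\underline{\hat{H}}_w$ and for products with $\underline{H}_x$. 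When two extended supports share a single endpoint, the shared simple reflection is absorbed by exactly one of the two factors and the tensor-type factorization is preserved after a short direct check using \eqref{eq:kl-mu}-\eqref{eq:kl-mu2}. Combined with the single-factor case, this gives $\mathbf{Kh}^{(V)}(w)=+$.

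For the forward direction, suppose $w$ is fully commutative but not a product of pairwise-distant special involutions. Using the two-row shape of $P_w$ and a standard decomposition of $w$ into "connected components" in one-line notation, I would split into two cases. If some component fails to be a single special involution, then, comparing with the classification of two-row involutions, that component must contain $2143$ as a consecutive pattern, and Theorem~\ref{thm-main} immediately gives that $w$ is Kostant negative. Otherwise, every component is special but two of them, say $\sigma_{i,j}$ and $\sigma_{i',j'}$, fail to be distant, meaning their extended supports overlap in at least two consecutive integers. In this case I would produce an explicit pair $x\neq y$ witnessing $\underline{\hat{H}}_w\underline{H}_x=\underline{\hat{H}}_w\underline{H}_y\neq 0$ by choosing $x,y$ adapted to the overlap and running the dual $\mu$-formulae \eqref{eq:kl-mu3}-\eqref{eq:kl-mu4} to see the two products collapse to the same dual KL basis element.

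The main obstacle is the last case: constructing witnesses $x,y$ in the overlap region and verifying the claimed coincidence. The tentative strategy is to reduce, via a local version of Proposition~\ref{ex-s4.15-1} applied to the relevant sub-pattern, to a minimal non-distant configuration of two special involutions, where the calculation becomes a small explicit Hecke algebra computation comparable to the one used for $14325\in S_5$ in \S\ref{s4.17}. A fully unified proof, rather than case analysis, would likely require organising fully commutative involutions by the skew shape encoding their connected components and inducting on the number of such components while controlling how the KL basis of the Hecke algebra behaves under the corresponding decomposition.
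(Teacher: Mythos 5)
First, a point of calibration: the paper does not prove this statement at all --- Theorem~\ref{thm-s7.2-1} is quoted verbatim as the main result of \cite{MMM} (Theorem~5.1 there), so the ``proof'' expected here is a citation, and your proposal is an attempt to reprove a substantial external theorem. Judged on its own merits, the sketch has a genuine gap at its logical core. You route both directions through Hecke-algebra computations of the type $\underline{\hat{H}}_w\underline{H}_x$, i.e.\ through the conditions $\mathbf{Kh}^{(IV)}$ and $\mathbf{Kh}^{(V)}$. But the equivalence of these conditions with $\mathbf{K}$ is precisely K{\aa}hrstr{\"o}m's open conjecture. What \cite[Theorem~B]{KMM} actually gives is: $\mathbf{K}(w)=+$ if and only if $\mathbf{Kh}^{(II)}(w)=+$ \emph{and} every $\theta_v L_w$ is indecomposable or zero. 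So in the backward direction, showing that the products $\underline{\hat{H}}_w\underline{H}_x$ never coincide does not suffice; you must also prove indecomposability of all $\theta_v L_w$, which your sketch never addresses and which is a substantial part of the argument in \cite{MMM}. In the forward direction the problem is worse: a coincidence of graded characters ($\mathbf{Kh}^{(V)}(w)=-$) does not produce an isomorphism of modules ($\mathbf{Kh}^{(II)}(w)=-$), since $(II)\Rightarrow(IV)$ and not conversely. This is exactly why the Kostant-negativity proofs elsewhere in this paper (Theorems~\ref{thm-cusp1}--\ref{thm-cusp3}) cannot stop at a Hecke computation and instead argue with Jantzen middles, simple tops and socles, and Loewy lengths to upgrade a character coincidence to a module isomorphism. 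Your plan to ``run the dual $\mu$-formulae to see the two products collapse'' therefore cannot close the forward direction as stated.

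Two further concrete problems. The claim that $\underline{\hat{H}}_{\sigma_{i,j}}\underline{H}_x$ is a single dual KL basis element up to a Laurent monomial whenever nonzero is false already in the smallest case $\sigma_{i,0}=s_i$, $x=s_i$, where \eqref{eq:kl-mu4} gives $(v+v^{-1})\underline{\hat{H}}_{s_i}+\underline{\hat{H}}_{e}+\cdots$; what one actually needs is injectivity of $x\mapsto\underline{\hat{H}}_w\underline{H}_x$ on its nonvanishing locus, which is a different (and harder) statement. And in the forward direction, the assertion that a connected non-special component of a fully commutative involution must contain $2143$ as a \emph{consecutive} pattern is unsubstantiated; the correct diagrammatic criterion (as recalled in Section~\ref{s7} of this paper) is the existence of two adjacent non-nested cups in the Temperley--Lieb diagram, and your component decomposition does not obviously reduce to it. The honest summary is that your outline identifies the right combinatorial dichotomy but defers exactly the steps --- indecomposability, injectivity for a single special involution, and the construction and module-level verification of witnesses for non-distant pairs --- that constitute the actual proof in \cite{MMM}.
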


\subsection{Cuspidal fully commutative involutions}\label{s7.3}

Let $n$ be even, say $n=2k$, with $k>1$. For $a\in\{1,2,\dots,k-1\}$, 
consider the element 
\begin{displaymath}
\tau_{k,a}:=\sigma_{a,a-1}\sigma_{k+a,k-a} \in S_n.
\end{displaymath}
Here are some examples in small ranks (using the one-line notation):
\begin{displaymath}
\tau_{2,1}=2143;\quad
\tau_{3,1}=215634;\quad
\tau_{3,2}=341265;
\end{displaymath}
\begin{displaymath}
\tau_{4,1}=21678345;\quad
\tau_{4,2}=34127856;\quad
\tau_{4,3}=45612387.
\end{displaymath}
Note that both $\tau_{3,1}$ and $\tau_{3,2}$ appear in \eqref{eqeqnnw2}.

Our main result in this section is the following:

\begin{theorem}\label{thm-s7.3-1}
For $n\in\mathbb{Z}_{>0}$, we have:
\begin{enumerate}[$($a$)$]
\item\label{thm-s7.3-1.1} $S_n$ contains a Kostant cuspidal
fully commutative involution if and only if $n\geq 4$ is even. 
\item\label{thm-s7.3-1.2} For an even $n\geq 4$, with $n=2k$,
the Kostant cuspidal
fully commutative involutions in $S_n$ are exactly the elements
$\tau_{k,a}$, where $a\in\{1,2,\dots,k-1\}$.
\end{enumerate}
\end{theorem}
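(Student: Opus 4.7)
The plan is to deduce (a) from (b) and prove (b) by a structural analysis using Theorem~\ref{thm-s7.2-1}. Since the elements $\tau_{k,a}$ live only in $S_{2k}$ with $k\geq 2$, identifying them as exactly the Kostant cuspidal fully commutative involutions will settle both (a) and (b).

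For the ``only if'' direction of (b), let $w$ be a Kostant cuspidal fully commutative involution in $S_n$ and write it as its unique decomposition $w=\sigma_{i_1,j_1}\cdots\sigma_{i_r,j_r}$ into special involutions with pairwise disjoint supports. Theorem~\ref{thm-s7.2-1} tells us that Kostant negativity of $w$ is detected by some pair of factors $\sigma,\sigma'$ whose extended supports overlap in at least two integers, equivalently, whose supports are immediately adjacent intervals. The combined support of such a pair is an interval $I$, and since the remaining factors fix every position in $I$, the consecutive pattern of $w$ at the positions in $I$ reproduces, after the standard order-preserving relabeling, the product $\sigma\sigma'$ interpreted in $S_{|I|}$, which is itself of the $\tau$-form and hence Kostant negative by Theorem~\ref{thm-s7.2-1}. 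Cuspidality forces $I=[1,n]$, which pins down $r=2$, $n=2k$ even, and the combinatorics of tiling $[1,2k]$ by two adjacent even intervals fixes $w=\sigma_{a,a-1}\sigma_{k+a,k-a-1}=\tau_{k,a}$ for some $a\in\{1,\dots,k-1\}$.

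For the ``if'' direction, $\tau_{k,a}$ is Kostant negative by Theorem~\ref{thm-s7.2-1} since its two factors are adjacent, and the remaining task is to show that every proper consecutive pattern $\pi$ on a window $[p,q]\subsetneq[1,2k]$ is Kostant positive. Since $\tau_{k,a}$ is $321$-avoiding, so is $\pi$, and by Proposition~\ref{ex-s4.15-1} combined with Theorem~\ref{thm-s7.2-1} it suffices to show that the Duflo involution of $\pi$ is a product of pairwise distant special involutions. The argument is organized around the four increasing runs of sizes $a,a,k-a,k-a$ comprising the one-line notation of $\tau_{k,a}$. When $[p,q]$ completely misses one of the two halves $[1,2a]$, $[2a+1,2k]$, the pattern reduces to a single special involution and is trivially Kostant positive. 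When $[p,q]$ truncates a half from the outside, the factor in that half degenerates into a Grassmannian shift, whose Duflo is a strictly smaller special involution that has ``retreated inward'' by exactly the size of the truncation; this retreat inserts at least one fixed position between the two surviving special involutions in the Duflo of $\pi$, forcing their supports to be distant.

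The main technical obstacle is the RSK bookkeeping: since the recording tableau of a direct sum of permutations is not the direct sum of the recording tableaux, the Duflo of $\pi$ has to be computed from scratch in each sub-case. The key computation is for the Grassmannian shift on $s+t$ positions with $s$ large entries followed by $t$ small entries: for $s\geq t$, its Duflo is $\sigma_{s,t-1}$ supported on the last $2t$ positions with the first $s-t$ positions fixed, and dually for $s\leq t$. Once this is established, verifying distance between the two special involutions in the Duflo of $\pi$ is arithmetic, and the tightest sub-case (when the window has length $2k-1$) produces extended supports sharing exactly one integer, just meeting the distance condition of Theorem~\ref{thm-s7.2-1}.
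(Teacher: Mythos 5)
Your ``if'' direction is essentially sound: the key computation of the Duflo involution of a truncated block-swap matches what the paper does (the paper streamlines this by invoking Theorem~\ref{thm-main} to reduce to the two windows of length $n-1$, whereas you check all windows directly, which is harmless). The problem is the ``only if'' direction, which rests on a false premise: \emph{not} every fully commutative involution admits a decomposition as a product of special involutions with pairwise disjoint supports. Products of special involutions with disjoint supports are exactly the involutions whose one-line notation is a concatenation of identity blocks and block-swap blocks; equivalently, their Temperley--Lieb cup configurations are disjoint unions of complete nests. A general fully commutative involution can have a cup nesting configuration that is not a union of complete nests, for example $351624=(1,3)(2,5)(4,6)\in S_6$ (cup diagram $(1,6),(2,3),(4,5)$), which appears in the list \eqref{eqeqnnw2}. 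Theorem~\ref{thm-s7.2-1} declares all such involutions Kostant negative precisely because they are \emph{not} products of pairwise-distant special involutions, so your dichotomy ``negativity is detected by a pair of adjacent factors'' does not apply to them, and your argument never shows that they fail to be cuspidal.

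This missing class is exactly where the real work of the paper lies. The paper translates everything into Temperley--Lieb diagrams, characterizes Kostant negativity via a pair of non-nested cups with adjacent inner endpoints, and analyzes what happens to the diagram when the last entry of the one-line notation is removed. The delicate case is an involution whose diagram has an outer cup joining $1$ and $n$ with a pair of non-nested cups strictly inside (as in $351624$): there is no purely combinatorial reason why deleting an endpoint should preserve the offending pattern, because (as you yourself note) RSK does not restrict to windows. The paper handles this by a representation-theoretic argument: it uses the explicit construction of \cite[Subsection~5.5]{MMM} to produce $x\neq y$ with $\theta_xL_w\cong\theta_yL_w\neq 0$ and $x,y$ lying in the parabolic subgroups omitting $s_1$ and $s_{n-1}$, and then transports this isomorphism to the length-$(n-1)$ pattern via \cite[Theorem~37]{CMZ}, contradicting cuspidality. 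Without an argument of this kind (or some substitute), your proof does not rule out cuspidal fully commutative involutions outside the family $\tau_{k,a}$. (Two small additional slips: you write $\tau_{k,a}=\sigma_{a,a-1}\sigma_{k+a,k-a-1}$ where the second index should be $k-a$, and a single transposition $(a,b)$ with $b>a+1$ is not a special involution, which is one way to see that your claimed unique decomposition cannot exist in general.)
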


\subsection{Proof of Theorem~\ref{thm-s7.3-1}}\label{s7.4}

Consider $n\geq 4$ with $n=2k$. For 
$a\in\{1,2,\dots,k-1\}$, consider
$\tau_{k,a}:=\sigma_{a,a-1}\sigma_{k+a,k-a}$.
The support of $\sigma_{a,a-1}$ is $\{1,2,\dots,2a\}$
while the support of $\sigma_{k+a,k-a}$ is
$\{2a+1,2a+2,\dots,2k\}$. The two supports are disjoint
but the elements $\sigma_{a,a-1}$ and $\sigma_{k+a,k-a}$ are
not distant. Therefore each $\tau_{k,a}$ as above is
Kostant negative by Theorem~\ref{thm-s7.2-1}.

To prove Kostant cuspidality of $\tau_{k,a}$, we only need to
look at the two extreme consecutive patterns of length $n-1$
in $\tau_{k,a}$ obtained by removing either the first or the last
element in the one-line notation
(i.e. either $\tau_{k,a}(1)$ or $\tau_{k,a}(n)$). Up to the symmetry of the root
system, we only need to consider one case, let us say, the one
when we remove $\tau_{k,a}(n)$. To start with, removing $\tau_{k,a}(n)$ from 
the one-line notation for $\tau_{k,k-1}$
gives the pattern $\sigma_{a,a-1}\in S_{n-1}$ which is Kostant 
positive by Theorem~\ref{thm-s7.2-1}.

To understand what removing of $\tau_{k,a}(n)$ does to $\tau_{k,a}$ with $a<k-1$,
we note that this does not affect the first factor $\sigma_{a,a-1}$
at all. So, we only need to understand what kind of pattern is obtained
if one removes $\sigma_{r,r-1}(2r)$ from $\sigma_{r,r-1}\in S_r$. In the one-line notation,
we first get the element 
\begin{displaymath}
(r+1)(r+2)\dots (2r)12\dots (r-1) 
\end{displaymath}
which corresponds to 
\begin{displaymath}
r(r+1)\dots (2r-1)12\dots (r-1)\in S_{2r-1}. 
\end{displaymath}
The recording tableau of this element is 
\begin{displaymath}
\begin{ytableau}
\scriptstyle{1}&
\scriptstyle{2}&
\scriptstyle{\dots}&
\resizebox{4mm}{!}{$\scriptstyle{r-1}$}&
\scriptstyle{r}\\
\resizebox{4mm}{!}{$\scriptstyle{r+1}$}&
\resizebox{4mm}{!}{$\scriptstyle{r+2}$}&
\scriptstyle{\dots}&
\resizebox{4mm}{!}{$\scriptstyle{2r-1}$}
\end{ytableau} 
\end{displaymath}
This coincides with the recording tableau for
$\sigma_{r,r-2}\in S_{2r-1}$ an hence the two elements
belong to the same left cell. Note that $\sigma_{r,r-2}$
fixes $1$ and hence its support is $\{2,3,\dots,2r-1\}$.

Now let us go back to $\tau_{k,a}$. From the previous paragraph
it follows that the pattern obtained from $\tau_{k,a}$ by removing
$\tau_{k,a}(n)$ is $\sigma_{a,a-1}\sigma_{k+a,k-a-1}$.
The support of $\sigma_{a,a-1}$ is $\{1,2,\dots,2a\}$
while the support of $\sigma_{k+a,k-a-1}$ is
$\{2a+2,2a+3,\dots,2k-1\}$ (note that $\sigma_{k+a,k-a-1}$ fixes $2a+1$) 
and we see that the two factors
are now distant. Therefore $\sigma_{a,a-1}\sigma_{k+a,k-a-1}$
is Kostant positive by Theorem~\ref{thm-s7.2-1}.

The above establishes the fact that all $\tau_{k,a}$ are Kostant cuspidal.
It remains to show that no other fully commutative involutions are
Kostant cuspidal.

To this end, we make the following observation.
Let $w\in S_n$. Recall that the number of rows in $P_w$
is the length of a longest decreasing subsequence in $w$,
see \cite{Sch}. Hence $w\in S_n$ is fully commutative
if and only if the length of a longest decreasing subsequence
in $w$ is either $1$ or $2$. This property is obviously
inherited by any patterns (in particular, by consecutive
patterns) of $w$. Therefore, any consecutive pattern appearing
in a fully commutative element of $S_n$ is itself fully commutative.

Now let us recall the connection between the fully commutative
permutations and Temperley-Lieb diagrams, see \cite{TL}. A 
{\em  Temperley-Lieb diagram} on $2n$ points is an equivalence
class of planar figures (up to isotopy) in which $2n$ points
($n$ at the top and $n$ at the bottom of a rectangular shape)
are connected in pairs by non-crossing arcs. Here is an example
of a Temperley-Lieb diagram on $2\cdot 7$ points:
\begin{equation}\label{eqeq651}
\xymatrix{
\bullet\ar@/_4pt/@{-}[r]&\bullet&\bullet\ar@/_4pt/@{-}[r]&\bullet&
\bullet\ar@{-}[dllll]&\bullet\ar@/_4pt/@{-}[r]&\bullet\\
\bullet&\bullet\ar@/^4pt/@{-}[r]&\bullet&
\bullet\ar@/^8pt/@{-}[rrr]&\bullet\ar@/^4pt/@{-}[r]&\bullet&\bullet
}
\end{equation}
Temperley-Lieb diagrams on $2n$ points are enumerated by  the
$n$-th Catalan number $\mathbf{C}_n=\frac{1}{n+1}\binom{2n}{n}$.
Note that this coincides with the number of fully commutative
elements in $S_n$. 

The {\em Temperley-Lieb algebra} $\mathbf{TL}_n(v)$ is defined as
the $\mathbb{A}$-algebra whose basis consists of all 
Temperley-Lieb diagrams on $2n$ points and the product is given by
first concatenating the diagrams and then applying a certain 
straightening procedure which involves removal of closed connected
components at the expense of multiplication with $v+v^{-1}$,
see \cite{Mar} for details. In fact, $\mathbf{TL}_n(v)$ is the 
quotient of $\mathbf{H}_n$ by the two-sided ideal generated by
all $\underline{H}_w$, where $w$ is {\em not} fully commutative.
The basis of the Temperley-Lieb diagrams is exactly the image of
the Kazhdan-Lusztig basis of $\mathbf{H}_n$, namely, 
those elements in the Kazhdan-Lusztig basis that correspond
to fully commutative indices, under this projection.
Consequently, sending a fully commutative $w\in S_n$ to the
image of $\underline{H}_w$ in $\mathbf{TL}_n(v)$ under the above 
projection is a bijection between the fully commutative elements
of $S_n$ and Temperley-Lieb diagrams on $2n$ points. We adopt the
convention that in the pictorial representation of the 
product $ab$ of two Temperley-Lieb diagrams,
the diagram of $a$ should be placed below the diagram of $b$.

Given a Temperley-Lieb diagram, an arc connecting a point of the
top row with a point of the bottom row is called {\em propagating}.
An arc connecting a point of the
top row with a point of the top row is called a {\em cup},
and an arc connecting a point of the
bottom row with a point of the bottom row is called a {\em cap}.
In diagrammatic terms, KL cells have the following description,
see \cite{MMM}:
\begin{itemize}
\item two  Temperley-Lieb diagrams are left related if and only
if they have the same cups;
\item two  Temperley-Lieb diagrams are right related if and only
if they have the same caps;
\item two  Temperley-Lieb diagrams are two-sided related if and only
if they have the same number of propagating arcs.
\end{itemize}
Finally, the main result of \cite{MMM} has the following diagrammatic
reformulation: a fully commutative $w\in S_n$ is Kostant negative
if and only if there are two consecutive points in the top row
of the corresponding Temperley-Lieb diagram such that the left one of these
two points is connected by a cup to some point to the left of it 
and the right one of these two points is  connected by a cup to some point on 
the right of it. Such two cups are called {\em not nested} (i.e. they are 
not inside one another). For example, the fully commutative element of
$S_7$ represented by the diagram \eqref{eqeq651} is Kostant
negative since the two leftmost cups are not nested. 
An example of two nested caps can be found
on the same diagram \eqref{eqeq651} in the bottom right corner.

Now let us take another look at the elements $\tau_{k,a}$. 
Here are the Temperley-Lieb diagrams corresponding to 
$\tau_{4,1}$
\begin{displaymath}
\xymatrix{
\bullet\ar@/_4pt/@{-}[r]&\bullet&
\bullet\ar@/_12pt/@{-}[rrrrr]&\bullet\ar@/_8pt/@{-}[rrr]
&\bullet\ar@/_4pt/@{-}[r]&\bullet&\bullet&\bullet\\
\bullet\ar@/^4pt/@{-}[r]&\bullet&
\bullet\ar@/^12pt/@{-}[rrrrr]&\bullet\ar@/^8pt/@{-}[rrr]&
\bullet\ar@/^4pt/@{-}[r]&\bullet&\bullet&\bullet
}
\end{displaymath}
to $\tau_{4,2}$
\begin{displaymath}
\xymatrix{
\bullet\ar@/_8pt/@{-}[rrr]&\bullet\ar@/_4pt/@{-}[r]&\bullet&\bullet
&\bullet\ar@/_8pt/@{-}[rrr]
&\bullet\ar@/_4pt/@{-}[r]&\bullet&\bullet\\
\bullet\ar@/^8pt/@{-}[rrr]&\bullet\ar@/^4pt/@{-}[r]&\bullet&
\bullet&\bullet\ar@/^8pt/@{-}[rrr]&
\bullet\ar@/^4pt/@{-}[r]&\bullet&\bullet
}
\end{displaymath}
and to $\tau_{4,3}$:
\begin{displaymath}
\xymatrix{
\bullet\ar@/_12pt/@{-}[rrrrr]&\bullet\ar@/_8pt/@{-}[rrr]
&\bullet\ar@/_4pt/@{-}[r]&\bullet&\bullet&\bullet&
\bullet\ar@/_4pt/@{-}[r]&\bullet\\
\bullet\ar@/^12pt/@{-}[rrrrr]&\bullet\ar@/^8pt/@{-}[rrr]&
\bullet\ar@/^4pt/@{-}[r]&\bullet&\bullet&\bullet
&\bullet\ar@/^4pt/@{-}[r]&\bullet
}
\end{displaymath}
Looking at these examples and the definitions, is easy to see that 
the elements $\tau_{k,a}$ are exactly those Kostant negative involutions 
which contain a unique pair of non-nested cups and, additionally, the
outer ends of these non-nested cups are the extreme points $1$ and $n$.

Now let $w\in S_n$ be a fully commutative Kostant cuspidal involution. Let 
$u\in S_{n-1}$ be the fully commutative element that corresponds to
the consecutive pattern of length $n-1$ in $w$ which starts at position $1$
(in other words, it is obtained from $w$ by removing the last element 
$w(n)$). We need to determine how the Temperley-Lieb diagram $b$ of $u$
can be obtained from the the Temperley-Lieb diagram $a$ of $w$. Note that
our Kostant cuspidality assumption implies that $w$ (equivalently, $a$) 
is Kostant negative while $u$ (equivalently, $b$) is Kostant positive.

The rightmost point in the top row of $a$ (the one which corresponds to $w(n)$)
is either the endpoint of a cup or of a propagating line. In the second case,
the propagating line must be simply vertical as $w$ is an involution
(and hence is symmetric with respect to the flip that swaps the top and the
bottom rows). This means that $w(n)=n$ and hence $u$ is the involution
obtained by restricting $w$ to $\{1,2,\dots,n-1\}$. This means that $b$
is obtained from $a$ by removing the vertical propagating line connecting
the rightmost points in both rows. In particular, if $a$ was Kostant negative,
so is $b$ (as the Kostant negativity condition is not affected by such removal).
Therefore this case cannot occur.

In the first case, the rightmost point in the top row of $a$ is connected
by a cup to some other point, say number $i$, counted from the left, in 
the top row. Then we can view $a$ as the juxtaposition of two diagrams:
the diagram $a'$ corresponding to the elements $1,2,\dots,i-1$ in both rows,
and the diagram $a''$ corresponding to the elements $i,i+1,\dots,n$.
The diagram $a'$, clearly, survives when we go from $a$ to $b$.
In particular, if $a'$ is Kostant negative, then so is $b$.
Therefore $a'$ must be Kostant positive.

Assume now that $i>1$. Then,
by symmetry (coming from swapping the order of our elements, that is, 
considering the consecutive pattern of length $n-1$ starting from $2$),
we similarly obtain that $a''$ is Kostant positive. But if both $a'$
and $a''$ are Kostant positive, the
only way for $a$ itself to be Kostant negative is that the cup
connecting $i$ and $n$ has a non-nested counterpart connecting $i-1$
with some $j<i-1$. From the above arguments, it follows that $j=1$
and thus $w=\tau_{n,\frac{i-1}{2}}$. 

We are left with the case when $i=1$. Note that, in this case, $a$ has no
propagating lines. Also note that, in this case, the cup connecting 
the first and the last points in the top row cannot be a part of a pair
of two cups that are not nested. Therefore the Kostant negativity of
$w$ implies that there exists a pair of other non-nested cups in $w$.
Let us assume that the endpoints of this pair of non-nested cups
are $s,t,t+1,r$ such that $1<s<t<t+1<r<n$. 
Then, from the explicit construction
in \cite[Subsection~5.5]{MMM} it follows that there are
different $x,y\in S_n$ such that $\theta_x L_w\cong \theta_y L_w\neq 0$
and, additionally, that $x(1)=1$, $x(n)=n$, $y(1)=1$ and $y(n)=n$
(as the construction described in \cite[Subsection~5.5]{MMM} 
changes the outer cup connecting $1$ and $n$ into a pair of
two vertical lines connecting the leftmost and the rightmost points, respectively).
Put differently, both $x$ and $y$ belong to each of the two parabolic subgroups 
of $S_n$ obtained by removing $s_1$ or $s_{n-1}$ from the set of all simple
reflections in $S_n$.
But then $\theta_x L_u\cong \theta_y L_u\neq 0$ by
\cite[Theorem~37]{CMZ} which implies that $u$ is Kostant negative, a contradiction.

This completes the proof 
of Theorem~\ref{thm-s7.3-1}.

\section{Three new infinite families of Kostant cuspidal involutions}\label{s5}

\subsection{First infinite family of examples}\label{s5.1}

\begin{theorem}\label{thm-cusp1}
For $n\geq 5$, the element
$w:=\mathrm{inv}(2,n-1)$ of $S_n$ is Kostant cuspidal. 
\end{theorem}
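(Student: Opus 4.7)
The plan is to prove Kostant cuspidality of $w := \mathrm{inv}(2, n-1) = 1(n-1)(n-2)\cdots 2n$ by establishing (a) that $w$ itself is Kostant negative, and (b) that every proper consecutive pattern of $w$ is Kostant positive. By Theorem~\ref{thm-main}, (b) reduces to checking Kostant positivity only for the two length-$(n-1)$ consecutive patterns of $w$, obtained by deleting the first or last entry of the one-line notation. Moreover, $w$ is invariant under the reverse-complement symmetry (equivalently, under the Dynkin automorphism $s_i \leftrightarrow s_{n-i}$ of $A_{n-1}$), so these two patterns are related by that symmetry and it suffices to handle one of them.

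For (a), I would exhibit distinct $x, y \in S_n$ with $\theta_x L_w \cong \theta_y L_w \neq 0$, which by \cite[Theorem~B]{KMM} gives $\mathbf{K}_n(w) = -$. Since $w$ is the longest element of the parabolic $W_J$ with $J = \{s_2, \ldots, s_{n-2}\}$, the simple reflections $s_1$ and $s_{n-1}$ are non-descents of $w$ on both sides (so $\theta_{s_1} L_w \neq 0 \neq \theta_{s_{n-1}} L_w$), and they are swapped by the Dynkin symmetry fixing $w$. This makes $(x,y) = (s_1, s_{n-1})$ the natural candidate pair. The required isomorphism can be approached either inductively, propagating the known base case $n=5$ (where $w=14325$ is Kostant negative by Section~\ref{s4.17}) upward, or directly, by computing the dual KL basis products $\underline{\hat H}_w \underline{H}_{s_1}$ and $\underline{\hat H}_w \underline{H}_{s_{n-1}}$ via the formulas \eqref{eq:kl-mu3}--\eqref{eq:kl-mu4} and matching them through the Dynkin symmetry.

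For (b), let $u := 1(n-1)(n-2)\cdots 2 \in S_{n-1}$ denote the length-$(n-1)$ pattern obtained by deleting the last entry of $w$; this is the longest element of the maximal parabolic of $S_{n-1}$ omitting $s_1$, and its Robinson--Schensted shape is the hook $(2, 1^{n-3})$. By \cite[Theorem~B]{KMM}, showing $u$ is Kostant positive requires verifying both that $\theta_v L_u$ is indecomposable or zero for every $v \in S_{n-1}$, and that nonzero $\theta_{v_1} L_u \cong \theta_{v_2} L_u$ forces $v_1 = v_2$. Because $u$ is not fully commutative for $n \geq 6$, Theorem~\ref{thm-s7.2-1} does not apply; one must work directly with the dual KL basis combinatorics for the hook two-sided cell, where the relevant $\mu$-values are tractable (potentially via the BBDVW formulas from \cite{BBDVW}), and small-rank cases can be anchored by GAP3 computations as in Section~\ref{s4.55}. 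The main obstacle is ruling out ``ghost isomorphisms'' $\theta_{v_1} L_u \cong \theta_{v_2} L_u$ uniformly across all pairs $v_1 \neq v_2$: this requires a rigidity statement for the hook-cell left cells, obtained either via an explicit description of the simple constituents of each $\theta_v L_u$ or by exploiting the maximal parabolic structure of $u$ directly.
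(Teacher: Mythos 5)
Your overall skeleton (exhibit $x\neq y$ with $\theta_xL_w\cong\theta_yL_w\neq 0$ for negativity; check only the two length-$(n-1)$ consecutive patterns for positivity) is the right one, but both halves contain genuine gaps. For negativity, the pair $(x,y)=(s_1,s_{n-1})$ cannot work, because the descent logic is reversed: by \eqref{eq:kl-mu4} (equivalently, the standard fact that $\theta_sL_w\neq 0$ if and only if $ws<w$), a simple reflection that is \emph{not} a right descent of $w$ annihilates $L_w$. Since $w=1(n-1)(n-2)\cdots 2n$ satisfies $w(1)<w(2)$ and $w(n-1)<w(n)$, both $\theta_{s_1}L_w$ and $\theta_{s_{n-1}}L_w$ are zero, so both of your proposed computations of $\underline{\hat H}_w\underline{H}_{s_1}$ and $\underline{\hat H}_w\underline{H}_{s_{n-1}}$ return $0$ and certify nothing. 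Your fallback of inducting from the base case $14325\in S_5$ via Theorem~\ref{thm-main} is impossible in principle: the two length-$(n-1)$ consecutive patterns of $w$ are $\mathrm{inv}(1,n-2)$ and $\mathrm{inv}(2,n-1)$ in $S_{n-1}$, neither of which is $\mathrm{inv}(2,4)=14325$ when $n\geq 6$; indeed, if any proper consecutive pattern of $w$ were Kostant negative, $w$ would fail to be cuspidal, which is exactly what the theorem asserts it does not. The actual argument must produce a far less obvious pair: the paper uses $d=(n-2)(n-1)(n-3)(n-4)\cdots 4312\,n$ and $s_{n-2}s_{n-1}d$, shows both functors are non-zero on $L_w$ by placing $d$ and a related involution correctly in the right KL order via insertion tableaux and \cite[Formula~(1)]{KiM}, and then proves the isomorphism through an analysis of the Jantzen middle of $\theta_{s_{n-1}}L_{ws_{n-1}}$, a $\mu$-vanishing coming from incomparability of two tableaux in the dominance order, and an elimination of all other candidate summands. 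None of this is forced by the Dynkin symmetry alone.

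For positivity of the proper patterns, your reduction to a single length-$(n-1)$ pattern is correct, but you then only describe the obstacle (ruling out what you call ghost isomorphisms) without supplying the idea that resolves it, so part (b) is not a proof as written. The missing input is a left-cell transfer to a known positive class: $\mathrm{inv}(1,m-1)\in S_m$ has the same recording tableau as $u=m(m-1)\cdots 4312=\mathrm{inv}(1,2)\,\mathrm{inv}(1,m)=w_0'w_0$ with $w_0'$ the longest element of a parabolic subgroup, such elements are Kostant positive by \cite[Theorem~4.4]{GJ}, and positivity is a left-cell invariant by \cite[Theorem~V]{MS}. Without this (or an equivalent substitute), the direct verification of the two conditions of \cite[Theorem~B]{KMM} for all $v\in S_{n-1}$ that you sketch remains an open-ended computation rather than an argument.
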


\subsection{Proof of Theorem~\ref{thm-cusp1}}\label{s5.2}

We split our proof of Theorem~\ref{thm-cusp1} in several steps.
As the first step, we will show that any proper consecutive
pattern of $w$  is Kostant positive.
Because of Theorem~\ref{thm-main}, it is enough to check the
patterns obtained from $w$ by removing either the first $1$
or the last $n$. Up to the outer automorphism of the root system
(given by the flip of the Dynkin diagram), we only need
to consider one element, namely, $\mathrm{inv}(1,n-2)\in S_{n-1}$.
We formulate it as the following lemma:

\begin{lemma}\label{lem-s4.3-1}
For every $m\geq 2$, the element 
$\mathrm{inv}(1,m-1)\in S_{m}$ is Kostant positive.
\end{lemma}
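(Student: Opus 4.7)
The element $w_m:=\mathrm{inv}(1,m-1)$ is the longest element $w_0^I$ of the maximal Young subgroup $W_I=\langle s_1,\dots,s_{m-2}\rangle$ of $S_m$; equivalently, it is the longest element of $S_{m-1}$ embedded as the stabilizer of the point $m$. The right descent set of $w_m$ equals $I$ and the whole Bruhat interval $[e,w_m]$ lies inside $W_I$, so the KL-combinatorics of $w_m$ is visible inside the parabolic sub-Hecke algebra $\mathbf{H}_I\subseteq \mathbf{H}_m$; in particular, the classical longest-element expansion $\underline{H}_{w_m}=\sum_{y\in W_I}v^{\ell(w_m)-\ell(y)}H_y$ holds, and \eqref{eq:kl-mu4} immediately yields $\underline{\hat{H}}_{w_m}\underline{H}_{s_{m-1}}=0$.

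I would prove Kostant positivity of $L_{w_m}$ via the criterion of \cite[Theorem~B]{KMM}: it suffices to verify that (a) $\theta_v L_{w_m}$ is either indecomposable or zero for every $v\in S_m$, and (b) distinct $v,v'\in S_m$ produce non-isomorphic non-zero modules $\theta_v L_{w_m}$ and $\theta_{v'}L_{w_m}$. For $v\in W_I$, both conditions should follow by reducing the computation to $\mathbf{H}_I$ via the expansion above, essentially translating the problem to the trivial-module case in $\mathcal{O}_0(\mathfrak{sl}_{m-1})$. For $v\notin W_I$, I would iterate \eqref{eq:kl-mu3} and \eqref{eq:kl-mu4}, exploiting the fact that $s_{m-1}$ must appear in any reduced expression of such $v$, which triggers the vanishing arising from the first formula and collapses most potential sources of coincidence.

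\textbf{Main obstacle.} The hardest part is to ensure that the $\mu$-correction terms in \eqref{eq:kl-mu4} do not accidentally produce $\underline{\hat{H}}_{w_m}\underline{H}_v=\underline{\hat{H}}_{w_m}\underline{H}_{v'}$ for some distinct $v\ne v'$, which would violate condition~(b). I expect this to be controlled by the explicit structure of the left cell module attached to $w_m$, which corresponds to the hook-shape partition $(2,1^{m-2})$; in this cell module, the right $\mathbf{H}_m$-action should separate all right cosets $v,v'$ for which the products do not vanish, thus simultaneously securing both~(a) and~(b). A conceptually cleaner alternative would be to invoke (or reprove) the classical fact, in the spirit of Gabber--Joseph, that $L_{w_0^J}$ is Kostant positive for any parabolic subset $J$ of simple reflections in type $A$, which covers this case directly.
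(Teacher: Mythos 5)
Your proposal has a genuine gap, and the ``conceptually cleaner alternative'' you offer as a fallback rests on a false statement. It is not true that $L_{w_0^J}$ is Kostant positive for every parabolic subset $J$ in type $A$: for $J=\{s_1,s_3\}$ in $S_4$ the longest element $w_0^J=s_1s_3=2143$ is precisely the smallest Kostant negative permutation (see Example~\ref{ex-s4.1-2}). The Gabber--Joseph result \cite[Theorem~4.4]{GJ} concerns elements of the form $w_0^Jw_0$, not $w_0^J$, and this distinction is exactly what the paper's proof exploits: one checks that $\mathrm{inv}(1,m-1)$ has the same recording tableau as $u=m(m-1)\dots 4312=\mathrm{inv}(1,2)\,\mathrm{inv}(1,m)=w_0^{\{s_1\}}w_0$, so the two elements lie in the same left cell, $u$ is Kostant positive by \cite[Theorem~4.4]{GJ}, and the lemma follows by left cell invariance \cite[Theorem~V]{MS}. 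Your identification of $\mathrm{inv}(1,m-1)$ as $w_0^I$ for $I=\{s_1,\dots,s_{m-2}\}$ is correct, but it does not by itself place the element within the scope of any known positivity theorem.

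The main route you sketch via \cite[Theorem~B]{KMM} is also not carried out. The decisive point, your condition~(b) that no two distinct $v,v'$ satisfy $\theta_vL_{w_m}\cong\theta_{v'}L_{w_m}\neq 0$, is exactly the hard part of Kostant's problem, and ``I expect this to be controlled by the explicit structure of the left cell module'' is not an argument. Moreover, the proposed dichotomy does not behave as you describe: the vanishing $\underline{\hat{H}}_{w_m}\underline{H}_{s_{m-1}}=0$ from \eqref{eq:kl-mu4} only kills $\theta_vL_{w_m}$ when $s_{m-1}$ sits in an appropriate descent set of $v$, not whenever $s_{m-1}$ occurs in a reduced expression of $v$. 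Indeed, $\theta_vL_{w_m}\neq 0$ forces the left descent set of $v$ to contain $I$, so the surviving $v$ are exactly of the form $w_0^Iu$ with $u$ a nontrivial minimal length coset representative; almost all of these contain $s_{m-1}$ and are not killed, and it is precisely for these that indecomposability and pairwise non-isomorphism of the $\theta_vL_{w_m}$ would have to be proved. Without that, the proposal does not establish the lemma.
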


\begin{proof}
For the element $\mathrm{inv}(1,m-1)$, the corresponding tableaux 
$P_{\mathrm{inv}(1,m-1)}$ and $Q_{\mathrm{inv}(1,m-1)}$ 
are both equal to
\begin{displaymath}
\begin{ytableau}
\scriptstyle{1}&
\scriptstyle{m}\\
\scriptstyle{2}\\
\scriptstyle{\vdots}\\
\resizebox{4mm}{!}{$\scriptstyle{m-1}$}
\end{ytableau}
\end{displaymath}
Consider the element $u:=m(m-1)\dots4312\in S_m$.
It is easy to see that, for this element $u$, we have 
$Q_u=Q_{\mathrm{inv}(2,n-1)}$, so that $u\sim_L \mathrm{inv}(1,m-1)$.
Also note that we can write $u=\mathrm{inv}(1,2)\mathrm{inv}(1,m)$, that is,
$u$ has the form $w'_0w_0$, where $w_0=\mathrm{inv}(1,m)$ is the
longest element in $S_m$ and $w'_0=\mathrm{inv}(1,2)$ is the 
longest element of a parabolic subgroup of $S_m$. Therefore
$u$ is Kostant positive by \cite[Theorem~4.4]{GJ}.
Consequently, $\mathrm{inv}(1,m-1)$
is Kostant positive by \cite[Theorem~V]{MS}.
\end{proof}

Now we need to prove that the original element $w$ is Kostant negative.
If, for some $x\in S_n$, the module $\theta_x L_w$ is non-zero and 
decomposable, we automatically have that $w$ is Kostant negative
and we are done.
Therefore let us assume that all non-zero modules of the form
$\theta_x L_w$ are indecomposable. We will show that, in this case,
there exist different $x,y\in S_n$ such that 
$\theta_x L_w\cong \theta_y L_w\neq 0$. In fact, we will explicitly
construct such $x$ and  $y$.

Consider the following element $d\in S_n$, given using the one-line notation:
\begin{displaymath}
{\color{teal}(n-2)(n-1)(n-3)(n-4)\dots 4312}n,
\end{displaymath}
here the {\color{teal}non-trivial part} of the permutation is given in teal.
Note that the definition of $d$ requires $n\geq 5$.
Clearly, $d^2=e$.

\begin{lemma}\label{lem-s4.3-2}
We have $ds_{1}\geq_R d$ and $ds_{1}\sim_R w$, in particular,
$w\geq_R d$. 
\end{lemma}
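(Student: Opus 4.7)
The plan is to split the lemma into two assertions. First, the inequality $ds_1\geq_R d$ follows by a length comparison: since the one-line notation $d=(n-2)(n-1)(n-3)(n-4)\cdots 4\,3\,1\,2\,n$ satisfies $d(1)=n-2<n-1=d(2)$, we have $\ell(ds_1)=\ell(d)+1$. Applying \eqref{eq:kl-mu2} with $w:=d$ and $s:=s_1$ gives
\[
\underline{H}_{d}\,\underline{H}_{s_1}=\underline{H}_{ds_1}+\sum_{\substack{x<d\\ xs_1<x}}\mu(d,x)\,\underline{H}_x,
\]
so $\underline{H}_{ds_1}$ appears with nonzero coefficient, which by the definition of the right preorder yields $ds_1\geq_R d$.

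For the harder assertion $ds_1\sim_R w$, I would invoke the Robinson--Schensted characterization of right cells recorded in Subsection~\ref{s2.2}: $x\sim_R y$ if and only if $P_x=P_y$. So it suffices to show $P_w=P_{ds_1}$. For $w=1\,(n-1)(n-2)\cdots 3\,2\,n$, inserting the entries in order yields row~$1=[1,n-1]$ after the first two steps; each subsequent entry $n-k$ (for $k=2,\dots,n-2$) bumps the second cell of row~$1$ and cascades straight down the first column; after inserting $2$, the tableau has hook shape $(2,1^{n-3})$ with row~$1=[1,2]$ and singletons $3,4,\dots,n-1$ running down the column. The final insertion of $n$ simply appends to row~$1$, producing the tableau of shape $(3,1^{n-3})$ with row~$1=[1,2,n]$ and singletons $3,4,\dots,n-1$ filling the column below.

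For $ds_1=(n-1)(n-2)\cdots 3\,1\,2\,n$, the decreasing prefix $n-1,n-2,\dots,3$ builds a single column reading $3,4,\dots,n-1$ from top to bottom. Inserting $1$ bumps $3$ out of row~$1$, and the resulting bumping chain cascades $3\to 4\to\cdots\to n-1$ down the column, appending a fresh cell labelled $n-1$ at the bottom; the resulting tableau has row~$1=[1]$ together with singletons $3,4,\dots,n-1$ down the column. Inserting $2$ and then $n$ simply extends row~$1$ to $[1,2,n]$, yielding exactly the tableau computed for $P_w$. Hence $P_{ds_1}=P_w$, so $ds_1\sim_R w$. Combined with the first step, $w\sim_R ds_1\geq_R d$ gives $w\geq_R d$. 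The only technical care needed is bookkeeping the cascade of bumps generated by inserting $1$ into the long column built by the decreasing prefix of $ds_1$, which is a routine Schensted calculation.
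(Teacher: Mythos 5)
Your proposal is correct and follows essentially the same route as the paper: deduce $ds_1\geq_R d$ from $\ell(ds_1)=\ell(d)+1$ via the multiplication formula, and verify $ds_1\sim_R w$ by checking that $P_{ds_1}=P_w$ equals the hook-shaped tableau with first row $1,2,n$ and first column $1,3,4,\dots,n-1$. Your Schensted computations are accurate; the paper merely states the tableau identity without the bumping details you supply.
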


\begin{proof}
The element $ds_{1}$ equals
\begin{displaymath}
{\color{purple}(n-1)(n-2)(n-3)(n-4)\dots 4312}n
\end{displaymath}
and hence has one inversion more than $d$. Therefore $ds_{1}>d$
implying $ds_{1}\geq_R d$. It remains to 
show that $ds_{1}\sim_R w$. This follows by observing that 
their insertion tableaux $P_w$ and $P_{ds_{1}}$ are both equal to
the following tableau:
\begin{equation}\label{eq-s3.3-3-n}
\begin{ytableau}
\scriptstyle{1}&
\scriptstyle{2}&
\scriptstyle{n}\\
\scriptstyle{3}\\
\scriptstyle{4}\\
\scriptstyle{\vdots}\\
\resizebox{4mm}{!}{$\scriptstyle{n-2}$}\\
\resizebox{4mm}{!}{$\scriptstyle{n-1}$}
\end{ytableau}
\end{equation}
\end{proof}

From $w\geq_R d$, using \cite[Formula~(1)]{KiM}, 
it follows that $\theta_d L_w\neq 0$ since both $w$
and $d$ are involutions. Our plan is to show that 
$\theta_d L_w\cong \theta_{s_{n-2}s_{n-1}d} L_w$.
As, clearly, we have $d\neq s_{n-2}s_{n-1}d$, this will
complete the proof of Theorem~\ref{thm-cusp1}.
To prove the isomorphism $\theta_d L_w\cong \theta_{s_{n-2}s_{n-1}d} L_w$,
we would need a few  auxiliary lemmata.

\begin{lemma}\label{lem-s4.3-3}
The module $\theta_{s_{n-2}s_{n-1}} L_w$ is 
isomorphic to $\theta_{s_{n-1}}L_{ws_{n-1}}$.
\end{lemma}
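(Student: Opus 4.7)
The plan is to split $\theta_{s_{n-2}s_{n-1}}$ into the composition $\theta_{s_{n-1}}\theta_{s_{n-2}}$ and then exploit a Serre quotient argument to replace $\theta_{s_{n-2}}L_w$ by $L_{ws_{n-1}}$ before applying $\theta_{s_{n-1}}$. First I would verify that $\theta_{s_{n-2}s_{n-1}}\cong\theta_{s_{n-1}}\theta_{s_{n-2}}$. Since $\ell(s_{n-2}s_{n-1})=2$ and no braid relation is involved, we have $\underline H_{s_{n-2}}\underline H_{s_{n-1}}=\underline H_{s_{n-2}s_{n-1}}$ in $\mathbf H_n$ without any lower-order correction; categorifying this identity via the $\mathbf H_n^{\mathrm{op}}$-isomorphism from Subsection~\ref{s2.3} yields the desired functor isomorphism. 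So it suffices to show $\theta_{s_{n-1}}(\theta_{s_{n-2}}L_w)\cong\theta_{s_{n-1}}L_{ws_{n-1}}$.

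Reading right descents directly from the one-line notation of $w=1,n{-}1,n{-}2,\dots,3,2,n$, one has $s_{n-1}\notin R(w)$ (since $w(n-1)=2<n=w(n)$) and $s_{n-1}\notin R(ws_{n-2})$ (since $(ws_{n-2})(n-1)=3<n$), while $s_{n-1}\in R(ws_{n-1})$. By formula \eqref{eq:kl-mu4}, this gives $\theta_{s_{n-1}}L_w=\theta_{s_{n-1}}L_{ws_{n-2}}=0$ and $\theta_{s_{n-1}}L_{ws_{n-1}}\neq 0$.

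The key structural claim, from which the lemma will follow, is that among all composition factors of $\theta_{s_{n-2}}L_w$, exactly one, namely $L_{ws_{n-1}}$, survives the application of $\theta_{s_{n-1}}$, and with multiplicity one. Indeed, expanding $\underline{\hat H}_w\underline H_{s_{n-2}}$ via \eqref{eq:kl-mu4} identifies the composition factors of $\theta_{s_{n-2}}L_w$ as $L_w$, $L_{ws_{n-2}}$, and $L_x$ for $x>w$ with $\mu(w,x)\neq 0$ and $xs_{n-2}>x$; the first two are killed by $\theta_{s_{n-1}}$ by the previous paragraph, and $x=ws_{n-1}$ is a Bruhat cover of $w$, giving $\mu(w,ws_{n-1})=1$ and satisfying $(ws_{n-1})s_{n-2}>ws_{n-1}$ and $(ws_{n-1})s_{n-1}<ws_{n-1}$. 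Granting the claim, let $\mathcal Y\subseteq\mathcal O_0$ be the Serre subcategory generated by the simples $L_y$ with $ys_{n-1}>y$. Then $\theta_{s_{n-1}}$ annihilates $\mathcal Y$ and factors through the Serre quotient $\mathcal O_0/\mathcal Y$; the image of $\theta_{s_{n-2}}L_w$ there has a single composition factor of multiplicity one and so is simple and isomorphic to the image of $L_{ws_{n-1}}$. Applying the induced exact functor to this isomorphism then yields $\theta_{s_{n-1}}\theta_{s_{n-2}}L_w\cong\theta_{s_{n-1}}L_{ws_{n-1}}$, completing the proof.

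The main obstacle is the uniqueness half of the key claim. The Bruhat covers of $w$ are manageable because $w$ is the longest element of the parabolic subgroup $\langle s_2,\dots,s_{n-2}\rangle$: they are exactly the $wt_{1,k}$ and $wt_{k,n}$ for $k=2,\dots,n-1$, and a direct one-line case check shows that only $wt_{n-1,n}=ws_{n-1}$ satisfies both $s_{n-2}\notin R$ and $s_{n-1}\in R$ simultaneously. Excluding contributions from $x$ with $\mu(w,x)\neq 0$ and $\ell(x)-\ell(w)\geq 3$ is the most delicate point and is the main technical hurdle; it would be handled either by a more detailed analysis of the Kazhdan--Lusztig polynomials $p_{x,w}$ when $w$ is the longest element of a parabolic, or by invoking \cite[Formula~(1)]{KiM} to bound the support of $\underline{\hat H}_w\underline H_{s_{n-2}s_{n-1}}$ in terms of the left preorder.
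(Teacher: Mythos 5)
Your overall strategy is reasonable and most of the individual steps check out (the factorization $\theta_{s_{n-2}s_{n-1}}\cong\theta_{s_{n-1}}\circ\theta_{s_{n-2}}$, the descent computations, the identification of $L_{ws_{n-1}}$ as a multiplicity-one constituent via $\mu(w,ws_{n-1})=1$, and the Serre-quotient formalism). However, the proof is not complete: you yourself isolate the crux — showing that no $x>w$ with $\ell(x)-\ell(w)\geq 3$, $\mu(w,x)\neq 0$ and $xs_{n-1}<x$ contributes to the Jantzen middle — and then only gesture at how it ``would be handled''. That is a genuine gap, since your whole argument collapses to nothing without it: if even one such $L_x$ survived $\theta_{s_{n-1}}$, the image of $\theta_{s_{n-2}}L_w$ in the quotient would no longer be simple. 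Of your two suggested remedies, the appeal to \cite[Formula~(1)]{KiM} does not do the job (it controls non-vanishing of the whole product $\underline{\hat{H}}_w\underline{H}_x$, not which dual KL basis elements occur in $\underline{\hat{H}}_w\underline{H}_{s_{n-2}}$). The gap is fillable, but by a different standard fact: if $\mu(w,x)\neq 0$ and $\ell(x)-\ell(w)>1$, then $\mathcal{D}_R(x)\subseteq\mathcal{D}_R(w)$; since $w$ is the longest element of $\langle s_2,\dots,s_{n-2}\rangle$ we have $s_{n-1}\notin\mathcal{D}_R(w)$, so any surviving $x$ must be a Bruhat cover, and your cover analysis then finishes the claim. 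You should supply this (or an equivalent) argument explicitly.

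It is worth noting that the paper's proof avoids this difficulty entirely. It only needs the \emph{existence} of $L_{ws_{n-1}}$ in the (semisimple) Jantzen middle, which makes $\theta_{s_{n-1}}L_{ws_{n-1}}$ a nonzero direct summand of $\theta_{s_{n-1}}\mathbf{J}_{s_{n-2}}L_w\cong\theta_{s_{n-2}s_{n-1}}L_w$; it then concludes by observing that $\theta_{s_{n-2}s_{n-1}}L_w$ is indecomposable, because by \cite[Proposition~2]{CMZ} this is equivalent to indecomposability of $\theta_{s_{n-2}}L_w$, which has simple top. This indecomposability trick is exactly what lets one bypass any control over the remaining constituents of the Jantzen middle, and you may want to adopt it instead of completing the harder uniqueness analysis.
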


\begin{proof}
We have $\theta_{s_{n-2}s_{n-1}}=\theta_{s_{n-1}}\circ\theta_{s_{n-2}}$. Consider the
module $\theta_{s_{n-2}} L_w$. Note that $ws_{n-2}<w$, so this module is non-zero.
Both the top and the socle of this module is isomorphic to $L_w$ and 
the latter module is killed by $s_{n-1}$ as $ws_{n-1}>w$. Therefore 
$\theta_{s_{n-2}s_{n-1}} L_w$ is isomorphic to
$\theta_{s_{n-1}}\mathbf{J}_{s_{n-2}}L_w$, where
$\mathbf{J}_{s_{n-2}}L_w$ is the Jantzen middle of $\theta_{s_{n-2}} L_w$. 

The module $\mathbf{J}_{s_{n-2}}L_w$ has $L_{ws_{n-1}}$ as a summand. 
Indeed, we have $\mu(w,ws_{n-1})=1$. Further,
$ws_{n-1}s_{n-2}>ws_{n-1}$. Therefore 
$[\mathbf{J}_{s_{n-2}}L_w:L_{ws_{n-1}}]=1$.
As $ws_{n-1}>w$, it follows that $\theta_{s_{n-1}}L_{ws_{n-1}}\neq 0$
is a summand of $\theta_{s_{n-1}}\mathbf{J}_{s_{n-2}}L_w$.
So, to complete the proof of our lemma, we just need to argue that
$\theta_{s_{n-2}s_{n-1}} L_w$ is indecomposable.
By \cite[Proposition~2]{CMZ}, the module $\theta_{s_{n-2}s_{n-1}} L_w$ is indecomposable
if and only if the module $\theta_{s_{n-2}} L_w$ is indecomposable. The module
$\theta_{s_{n-2}} L_w$ has simple top and thus is indecomposable.
This completes the proof.
\end{proof}

\begin{lemma}\label{lem-s4.3-4}
We have $d\sim_L s_{n-2}s_{n-1}d$. 
\end{lemma}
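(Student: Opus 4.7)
The plan is to translate the claimed left-cell equivalence into an equality of Schensted insertion tableaux and verify it by a direct computation. By the Robinson-Schensted correspondence one has $x\sim_L y$ in $S_n$ if and only if $Q_x=Q_y$, which using $Q_x=P_{x^{-1}}$ and the involutivity $d=d^{-1}$ turns the claim $d\sim_L s_{n-2}s_{n-1}d$ into the tableau identity
\begin{displaymath}
P_d=P_{d\,s_{n-1}s_{n-2}}.
\end{displaymath}
Writing out the two permutations in one-line notation,
\begin{displaymath}
d=(n-2)(n-1)(n-3)(n-4)\dots 4\,3\,{\color{teal}1\,2\,n},\qquad
d\,s_{n-1}s_{n-2}=(n-2)(n-1)(n-3)(n-4)\dots 4\,3\,{\color{teal}n\,1\,2},
\end{displaymath}
shows that they agree except in the last three positions, so row-insertion produces the same intermediate tableau $T$ after reading the common prefix of length $n-3$.

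Next I would compute $T$ explicitly. A short cascade check (insert $n-2$ and $n-1$ into row $1$, then successively $n-3,n-4,\dots,3$, each of which bumps the current $(1,1)$-entry down column $1$) shows that $T$ has first row $\{3,n-1\}$ and a column below filled with $4,5,\dots,n-2$ (one entry per row, from row $2$ down to row $n-4$; the column is empty when $n=5$).

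The final step is to row-insert the two differing tails $(1,2,n)$ and $(n,1,2)$ into $T$ and check that the outputs coincide. For $d$, inserting $1$ triggers a length-$(n-4)$ bumping cascade down column $1$ that pushes $n-2$ into a new row $n-3$; then $2$ bumps $n-1$ into row $2$ (placed to the right of $3$); finally $n$ is appended to row $1$. For $d\,s_{n-1}s_{n-2}$, the order is reversed: $n$ is first appended to row $1$, then inserting $1$ triggers the same column-$1$ cascade (leaving row $1$ as $\{1,n-1,n\}$), and inserting $2$ again bumps $n-1$ into row $2$. Both sequences produce the same final $P$-tableau with first row $\{1,2,n\}$, second row $\{3,n-1\}$, and single-cell rows $\{4\},\{5\},\dots,\{n-2\}$ below, which yields the desired equality.

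The only real obstacle is careful bookkeeping of the two column-$1$ cascades to confirm that they terminate at the same row in the same way (in particular, that $n-1$ remains in row $1$ throughout the first cascade so that $2$ can bump it into row $2$ at the same stage in both computations). No deeper Hecke-algebraic input is needed beyond the left-cell/$Q$-tableau dictionary.
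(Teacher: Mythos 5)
Your proposal is correct and follows essentially the same route as the paper: both reduce the claim to the Robinson--Schensted left-cell criterion and verify by direct insertion that the relevant tableaux coincide with the tableau having first row $\{1,2,n\}$, second row $\{3,n-1\}$, and single-cell rows $4,\dots,n-2$ below (your passage to $P$-tableaux of the inverses is the same computation, since $Q_w=P_{w^{-1}}$ and $d$ is an involution). The bookkeeping of the two bumping cascades checks out, including the point that $n-1$ survives in row $1$ until $2$ bumps it into row $2$.
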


\begin{proof}
The element $s_{n-2}s_{n-1}d$ is given by 
the following:
\begin{displaymath}
{\color{teal}(n-1)n(n-3)(n-4)\dots 4312
(n-2)}.
\end{displaymath}
It is easy to check that the recording tableaux
$Q_{d}$ and $Q_{s_{n-2}s_{n-1}d}$ are both equal to
\begin{equation}\label{eq-s3.3-35}
\begin{ytableau}
\scriptstyle{1}&
\scriptstyle{2}&
\scriptstyle{n}\\
\scriptstyle{3}&\resizebox{4mm}{!}{$\scriptstyle{n-1}$}\\
\scriptstyle{4}\\
\scriptstyle{\vdots}\\
\resizebox{4mm}{!}{$\scriptstyle{n-2}$}
\end{ytableau}
\end{equation}
This implies the claim.
\end{proof}

\begin{lemma}\label{lem-s4.3-5}
We have $s_{n-2}s_{n-1}d\leq_R w$. 
\end{lemma}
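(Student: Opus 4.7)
Set $v := s_{n-2}s_{n-1}d$. I would first reformulate the goal $v\leq_R w$ representation-theoretically. Applying the anti-involution $H_x\mapsto H_{x^{-1}}$ on $\mathbf{H}_n$ (which interchanges $\leq_L$ and $\leq_R$ under inversion) together with $w=w^{-1}$, the claim becomes $v^{-1}\leq_L w$; by the identity recalled in Subsection~\ref{s2.2}, this in turn is equivalent to $\underline{\hat{H}}_w\underline{H}_v\neq 0$, that is (via Subsection~\ref{s2.3}) to $\theta_v L_w\neq 0$.

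The combinatorial half of the plan establishes the auxiliary identity $v^{-1}\sim_R d$. Writing $v^{-1}=ds_{n-1}s_{n-2}$, direct inspection of $d$ shows both right multiplications to be length-increasing (values at positions $n-1,n$ go from $(2,n)$ to $(n,2)$, then at $n-2,n-1$ from $(1,n)$ to $(n,1)$), so the one-line notation of $v^{-1}$ is $(n-2)(n-1)(n-3)(n-4)\dots 43n12$. An explicit RSK insertion on this word produces the tableau \eqref{eq-s3.3-35} that represents $P_d$, giving $P_{v^{-1}}=P_d$ and hence $v^{-1}\sim_R d$. Combined with Lemma~\ref{lem-s4.3-2} and transitivity of $\leq_R$, this yields $v^{-1}\leq_R w$, equivalently $v\leq_L w$.

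The crux is then upgrading $v\leq_L w$ to $v\leq_R w$, which for the non-involution $v$ is a nontrivial step since the two preorders genuinely differ in general. My preferred route is categorical: since $v=s_{n-2}s_{n-1}d$ is a reduced factorization, $\theta_v$ occurs as a direct summand of the composition $\theta_d\circ\theta_{s_{n-2}s_{n-1}}$, and Lemma~\ref{lem-s4.3-3} identifies $\theta_{s_{n-2}s_{n-1}}L_w$ with $\theta_{s_{n-1}}L_{ws_{n-1}}$, so the composition acts on $L_w$ as $\theta_d\,\theta_{s_{n-1}}L_{ws_{n-1}}$; one then argues that the $\theta_v$-summand does not vanish. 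A parallel combinatorial approach is to exhibit the unique involution $z$ with $P_z=P_v$ (so $z\sim_R v$), directly verify $z\leq_R w$, and conclude by transitivity. The main obstacle is precisely this non-vanishing step, which I expect to require either a Jantzen-middle calculation extending the argument of Lemma~\ref{lem-s4.3-3}, or a careful Kazhdan--Lusztig polynomial estimate obtained by iterated application of \eqref{eq:kl-mu3}--\eqref{eq:kl-mu4} and exploiting non-negativity of the resulting coefficients in the dual Kazhdan--Lusztig basis.
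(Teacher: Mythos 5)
Your proposal does not close: it is a plan that stops exactly at the step that constitutes the actual content of the lemma. The first two paragraphs are correct but only yield $v\leq_L w$ (via $v^{-1}\sim_R d\leq_R w$ and inversion), which is strictly weaker than the claim; as you note yourself, $\leq_L$ and $\leq_R$ genuinely differ on non-involutions, and nothing in your argument so far touches the right preorder. The ``crux'' paragraph then offers two routes but executes neither. The first route is moreover awkwardly circular in context: knowing that $\theta_v$ is a summand of $\theta_d\circ\theta_{s_{n-2}s_{n-1}}$ and that this composition does not kill $L_w$ says nothing about the particular summand $\theta_v L_w$, and in the paper the non-vanishing $\theta_v L_w\neq 0$ is deduced \emph{from} this lemma via the criterion recalled in Subsection~\ref{s2.2} and then fed into Corollary~\ref{cor-s4.3-6}; extracting the lemma from a Jantzen-middle analysis of $\theta_d\theta_{s_{n-1}}L_{ws_{n-1}}$ would amount to doing that corollary's work up front.

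Your second route is the paper's actual argument, but you leave both of its substeps blank. The paper exhibits the Duflo involution $\tilde d\sim_R v$ explicitly (one-line notation $1(n-1)(n-3)(n-4)\dots 43n2(n-2)$), and then---since there is no combinatorial description of $\leq_R$ against which one could ``directly verify'' anything---proves $\tilde d\leq_R w$ by a specific trick: $\ell(\tilde ds_{1})>\ell(\tilde d)$ gives $\tilde ds_{1}\geq_R\tilde d$, and a tableau computation shows $P_{\tilde ds_{1}}=P_w$, i.e.\ $\tilde ds_{1}\sim_R w$, whence $v\sim_R\tilde d\leq_R\tilde ds_{1}\sim_R w$. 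Without producing $\tilde d$ and such a length-increasing witness landing in the right cell of $w$, the lemma is not proved.
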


\begin{proof}
Let $\tilde{d}$ be the involution such that 
$\tilde{d}\sim_R s_{n-2}s_{n-1}d$. Then one checks
that $\tilde{d}$ is given by
\begin{displaymath}
1{\color{teal}(n-1)(n-3)(n-4)\dots 43n2(n-2)}.
\end{displaymath}
The element $\tilde{d}s_{1}$ is thus given by 
\begin{displaymath}
{\color{teal}(n-1)1(n-3)(n-4)\dots 43n2(n-2)}
\end{displaymath}
and therefore $\ell(\tilde{d}s_{1})>\ell(\tilde{d})$. This implies
$\tilde{d}s_{1}\geq_R \tilde{d}$. It is easy to check that the
insertion tableau of $\tilde{d}s_{1}$ 
coincides with the insertion tableau of $w$
(this is given by \eqref{eq-s3.3-3-n}).
Therefore $\tilde{d}s_{1}\sim_R w$. This implies the claim.
\end{proof}

Combining Lemma~\ref{lem-s4.3-5} with 
\cite[Formula~(1)]{KiM}, we conclude that we have the inequality
$\theta_{s_{n-2}s_{n-1}d} L_w\neq 0$.  

\begin{lemma}\label{lem-s3.3-9}
We have $d\not \leq_R (ws_{n-1})^{-1}=s_{n-1}w$. 
\end{lemma}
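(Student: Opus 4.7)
The statement $d\not\leq_R s_{n-1}w$ reformulates, via the involution property $d^{-1}=d$ and the identity $(s_{n-1}w)^{-1}=ws_{n-1}$, to $d\not\leq_L ws_{n-1}$, and by \cite[Formula~(1)]{KiM} further to the Hecke-algebra vanishing $\underline{\hat{H}}_{ws_{n-1}}\underline{H}_d=0$. My plan is to establish this identity by a direct computation that combines the disjoint-support structure of $d$ with the standard Kazhdan-Lusztig multiplication formulae.

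First, I would collect several preparatory vanishings. The support of $d$ is contained in $\{s_1,\ldots,s_{n-2}\}$, which is disjoint from $\{s_{n-1}\}$; hence Lemma~\ref{lem-disjointsupports} yields the clean factorisations $\underline{H}_d\underline{H}_{s_{n-1}}=\underline{H}_{ds_{n-1}}$ and $\underline{H}_{s_{n-1}}\underline{H}_d=\underline{H}_{s_{n-1}d}$. A short length check shows that $ws_{n-1}>w$ and that $s_{n-1}ws_{n-1}>ws_{n-1}$; combined with formulae \eqref{eq:kl-mu4} and \eqref{eq:kl-mu3} this gives the auxiliary vanishings $\underline{\hat{H}}_w\underline{H}_{s_{n-1}}=0$ and $\underline{H}_{s_{n-1}}\underline{\hat{H}}_{ws_{n-1}}=0$.

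To combine these, apply \eqref{eq:kl-mu4} to $ws_{n-1}$ with its right descent $s_{n-1}$, right-multiply by $\underline{H}_d$, and use the disjoint-support identity, producing
\begin{displaymath}
(v+v^{-1})\underline{\hat{H}}_{ws_{n-1}}\underline{H}_d = \underline{\hat{H}}_{ws_{n-1}}\underline{H}_{s_{n-1}d} - \underline{\hat{H}}_w\underline{H}_d - \sum_{\substack{x>ws_{n-1}\\xs_{n-1}>x}}\mu(ws_{n-1},x)\,\underline{\hat{H}}_x\underline{H}_d.
\end{displaymath}
Since $v+v^{-1}$ is a regular element of $\mathbf{H}_n$, it suffices to show that the right-hand side vanishes. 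The hard part will be the $\mu$-sum: for each $x$ with $\mu(ws_{n-1},x)\neq 0$ and $xs_{n-1}>x$, one must verify that $\underline{\hat{H}}_x\underline{H}_d$ either vanishes or participates in an explicit cancellation with the remaining two terms. I would attack this through Robinson-Schensted shape analysis combined with the equivalence $\underline{\hat{H}}_z\underline{H}_d\neq 0\Leftrightarrow d\leq_L z$ of \cite[Formula~(1)]{KiM}: the tableau $P_d$ has shape $(3,2,1^{n-5})$ while $P_{s_{n-1}w}$ has shape $(3,1^{n-3})$, placing $d$ and $s_{n-1}w$ in different two-sided Kazhdan-Lusztig cells, and the candidate $\mu$-neighbours $x$ live in cells whose left-cell relation to $d$ can be constrained by the cell combinatorics of \cite{KL} (or the AI-inspired formulae of \cite{BBDVW}); the base cases are amenable to direct verification using CHEVIE.
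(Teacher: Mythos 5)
Your reduction of the statement to the vanishing $\underline{\hat{H}}_{ws_{n-1}}\underline{H}_d=0$ is correct, and the displayed identity obtained from \eqref{eq:kl-mu4} is valid algebra, but the proof stops exactly where the work begins. The right-hand side of your identity contains the term $-\underline{\hat{H}}_w\underline{H}_d$, and this term is provably \emph{nonzero}: by Lemma~\ref{lem-s4.3-2} we have $d\leq_R w$, hence $d=d^{-1}\leq_L w$ and $\underline{\hat{H}}_w\underline{H}_d\neq 0$ by \cite[Formula~(1)]{KiM} (this is precisely the statement $\theta_dL_w\neq 0$ on which the whole section rests). So the right-hand side cannot vanish term by term; one would have to exhibit an exact cancellation of this nonzero term against $\underline{\hat{H}}_{ws_{n-1}}\underline{H}_{s_{n-1}d}$ and the $\mu$-sum, and you give no mechanism for doing so. Your two ``auxiliary vanishings'' never enter the computation: $\underline{H}_{s_{n-1}}\underline{\hat{H}}_{ws_{n-1}}=0$ concerns multiplication on the \emph{left} of the dual KL element, which is irrelevant to $\underline{\hat{H}}_{ws_{n-1}}\underline{H}_{s_{n-1}}\underline{H}_d$, and $\underline{\hat{H}}_w\underline{H}_{s_{n-1}}=0$ does not help control $\underline{\hat{H}}_w\underline{H}_d$ since $s_{n-1}$ does not occur in $d$.

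The fallback you sketch is also flawed at a key point. The shapes $(3,2,1^{n-5})$ of $P_d$ and $(3,1^{n-3})$ of $P_{s_{n-1}w}$ are \emph{comparable} in the dominance order, so by Geck's description of $\leq_J$ the two-sided cells of $d$ and $s_{n-1}w$ are comparable, with $d$ below; ``different two-sided cells'' therefore gives no obstruction whatsoever to $d\leq_R s_{n-1}w$. The paper's actual argument is finer: it compares the two standard Young tableaux themselves, observing that their shapes dominate one way but that, after restricting both tableaux to the entries $\{1,\dots,n-1\}$, the domination reverses. Hence the tableaux are incomparable in the dominance order on standard Young tableaux, and Ta{\c{s}}kin's theorem (\cite[Subsection~3.3]{Ta}) then yields incomparability in $\leq_R$. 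Finally, verifying ``base cases'' in CHEVIE cannot close the argument, since the lemma is a statement about an infinite family indexed by $n$.
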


\begin{proof}
The tableau $Q_{(ws_{n-1})^{-1}}=P_{s_{n-1}w}$ looks as follows:
\begin{equation}\label{eq-s3.3-333}
\begin{ytableau}
\scriptstyle{1}&
\scriptstyle{2}&
\resizebox{4mm}{!}{$\scriptstyle{n-1}$}\\
\scriptstyle{3}\\
\scriptstyle{4}\\
\scriptstyle{\vdots}\\
\resizebox{4mm}{!}{$\scriptstyle{n-2}$}\\
\scriptstyle{n}
\end{ytableau}
\end{equation}
We want to compare the standard Young tableaux 
\eqref{eq-s3.3-35} and \eqref{eq-s3.3-333}. 
Note that the shape of \eqref{eq-s3.3-35} dominates 
the shape of \eqref{eq-s3.3-333}. Now,
in both, consider the following respective 
standard Young subtableaux with content $\{1,2,\dots,n-1\}$:
\begin{displaymath}
\begin{ytableau}
\scriptstyle{1}&
\scriptstyle{2}\\
\scriptstyle{3}&\resizebox{4mm}{!}{$\scriptstyle{n-1}$}\\
\scriptstyle{4}\\
\scriptstyle{\vdots}\\
\resizebox{4mm}{!}{$\scriptstyle{n-2}$}
\end{ytableau}
\qquad\qquad
\begin{ytableau}
\scriptstyle{1}&
\scriptstyle{2}&
\resizebox{4mm}{!}{$\scriptstyle{n-1}$}\\
\scriptstyle{3}\\
\scriptstyle{4}\\
\scriptstyle{\vdots}\\
\resizebox{4mm}{!}{$\scriptstyle{n-2}$}
\end{ytableau}
\end{displaymath}
Note that the shape of the second one dominates the 
shape of the first.  Combining the two observations,
we conclude that the standard Young tableaux 
\eqref{eq-s3.3-35} and \eqref{eq-s3.3-333}
are not comparable with respect to the dominance 
order on standard Young tableaux. From 
\cite[Subsection~3.3]{Ta} it now follows that the
corresponding elements are not comparable with respect
to the right KL order. This completes the proof.
\end{proof}

Now Lemmata~\ref{lem-s4.3-4} and \ref{lem-s4.3-5} motivate the following:

\begin{corollary}\label{cor-s4.3-6}
We have $\theta_d L_w\cong \theta_{s_{n-2}s_{n-1}d} L_w$.
\end{corollary}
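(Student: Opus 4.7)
The plan is to compute $\theta_d\theta_{s_{n-2}s_{n-1}}L_w$ in two different ways and reconcile the resulting Krull--Schmidt decompositions.

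For the first computation, I would apply Lemma~\ref{lem-s4.3-3} to rewrite
\[
\theta_d\theta_{s_{n-2}s_{n-1}}L_w \;\cong\; \theta_d\theta_{s_{n-1}}L_{ws_{n-1}}.
\]
The module $\theta_{s_{n-1}}L_{ws_{n-1}}$ has simple top and socle both equal to $L_{ws_{n-1}}$, with a semisimple Jantzen middle containing $L_w$ as a direct summand (corresponding to the term $L_{(ws_{n-1})s_{n-1}} = L_w$ in the formula \eqref{eq:kl-mu4}). By Lemma~\ref{lem-s3.3-9} we have $\theta_dL_{ws_{n-1}}=0$, so applying the exact functor $\theta_d$ annihilates the top and socle. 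This exhibits $\theta_dL_w$ as a direct summand of $\theta_d\theta_{s_{n-2}s_{n-1}}L_w$, together with the $\theta_d$-images of the other simple constituents $L_x$ of the Jantzen middle (indexed by $x > ws_{n-1}$ with $xs_{n-1}>x$ and $\mu(ws_{n-1},x)\neq 0$).

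For the second computation, the Hecke algebra product (using that the word $s_{n-2}s_{n-1}\cdot d$ is reduced of length $\ell(d)+2$) expands as
\[
\underline{H}_{s_{n-2}s_{n-1}}\underline{H}_d \;=\; \underline{H}_{s_{n-2}s_{n-1}d} + \sum_{z} c_z \underline{H}_z,
\]
which categorifies to a functor decomposition
\[
\theta_d\theta_{s_{n-2}s_{n-1}} \;\cong\; \theta_{s_{n-2}s_{n-1}d} \oplus \bigoplus_z \theta_z^{\oplus c_z}.
\]
Evaluating at $L_w$ exhibits $\theta_{s_{n-2}s_{n-1}d}L_w$ as a second distinguished direct summand of the same module. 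By Lemma~\ref{lem-s4.3-5} this summand is non-zero, by Lemma~\ref{lem-s4.3-2} so is $\theta_dL_w$, and by the standing contradiction hypothesis both are indecomposable.

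Equating the two descriptions of $\theta_d\theta_{s_{n-2}s_{n-1}}L_w$ and cancelling in Krull--Schmidt, the claim reduces to matching up (or simultaneously vanishing) the two remaining families of ``extra'' summands. This is the main obstacle. For the extras coming from the Jantzen middle, I would use a tableau-incomparability argument in the spirit of the proof of Lemma~\ref{lem-s3.3-9}: the $L_x$ that arise have $x$ sitting in a right cell whose shape is not comparable to the shape of $Q_d$, so $\theta_dL_x=0$. For the extras coming from the Hecke-algebra expansion, I would use Lemma~\ref{lem-s4.3-4} (namely $d\sim_L s_{n-2}s_{n-1}d$) to restrict the possible $z$ and a parallel right-cell comparison to conclude $\theta_zL_w=0$. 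Once both extras vanish, Krull--Schmidt forces $\theta_dL_w\cong \theta_{s_{n-2}s_{n-1}d}L_w$, completing the proof.
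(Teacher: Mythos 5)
Your setup matches the paper's: you rewrite $\theta_d\theta_{s_{n-2}s_{n-1}}L_w\cong\theta_d\theta_{s_{n-1}}L_{ws_{n-1}}$ via Lemma~\ref{lem-s4.3-3}, kill the top and socle $L_{ws_{n-1}}$ using Lemma~\ref{lem-s3.3-9}, and reduce to the Jantzen middle $\mathbf{J}_{s_{n-1}}(L_{ws_{n-1}})$, which contains $L_w$ with multiplicity one. (Your second ``family of extras'' is a non-issue, by the way: once one knows $\theta_d\theta_{s_{n-2}s_{n-1}}L_w\cong\theta_dL_w$ and that the latter is indecomposable under the standing hypothesis, the fact that $\theta_{s_{n-2}s_{n-1}d}L_w$ is a non-zero direct summand of it already forces the isomorphism by Krull--Schmidt; the remaining $\theta_zL_w$ vanish automatically and need no separate right-cell analysis.)

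The genuine gap is the step you defer to ``a tableau-incomparability argument in the spirit of Lemma~\ref{lem-s3.3-9}'': showing that $\theta_d$ kills every constituent $L_y$ of $\mathbf{J}_{s_{n-1}}(L_{ws_{n-1}})$ other than $L_w$. First, you have not identified which $y$ can occur, and without that the claim cannot be checked; the paper pins them down by showing that any surviving $y$ must satisfy $y\sim_R w$ (because every simple top of $\theta_dL_y$ is a top of some $\theta L_w$) and $ys_k<y$ for all $k=2,\dots,n-3$ (because $s_kd<d$ gives $\theta_d\theta_{s_k}\cong\theta_d\oplus\theta_d$, so $\theta_{s_k}L_y=0$ would force $\theta_dL_y=0$). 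Second, the cell/shape comparison you propose cannot work for the one candidate that survives these constraints, namely $y=(n-1)(n-2)\cdots 4312n$: this $y$ lies in the \emph{same} right cell as $w$, so its tableau data are exactly those of $w$, for which $\theta_dL_w\neq 0$; no incomparability argument with $Q_d$ can distinguish it. The paper disposes of this candidate by a different mechanism entirely --- it is not Bruhat-comparable to $ws_{n-1}$ (it contains no $s_{n-1}$ in any reduced expression while being longer than $ws_{n-1}$), hence $\mu(y,ws_{n-1})=0$ and $L_y$ simply does not occur in the Jantzen middle. Without these ingredients your argument does not close.
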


\begin{proof}
We now know that both $\theta_{d} L_w\neq 0$
and $\theta_{s_{n-2}s_{n-1}d} L_w\neq 0$.
Further, we know that $\theta_{s_{n-2}s_{n-1}d}$ is a summand of
$\theta_{d}\circ\theta_{s_{n-2}s_{n-1}}$ and that 
$\theta_{s_{n-2}s_{n-1}}L_w\cong \theta_{s_{n-1}}L_{ws_{{n-1}}}$.

By Lemma~\ref{lem-s3.3-9} and  \cite[Formula~(1)]{KiM}, we have
$\theta_d L_{ws_{n-1}}=0$. Also, $L_w$ is a summand of 
the Jantzen middle $\mathbf{J}_{s_{n-1}}(L_{ws_{n-1}})$
appearing with multiplicity one since $w<ws_{n-1}$.
By \cite[Proposition~2]{CMZ}, the module
$\theta_{s_{n-2}s_{n-1}d} L_w$  has the same number of summands as 
$\theta_d L_w$.
Therefore, to complete the proof,
we just need to show that $\theta_d$ kills all other potential
summands $L_y$ of $\mathbf{J}_{s_{n-1}}(L_{ws_{n-1}})$.
So, we assume that $L_y$ is a summand of $\mathbf{J}_{s_{n-1}}(L_{ws_{n-1}})$
different from $L_w$ and such that $\theta_d L_y\neq 0$.

Each such $y$ satisfies both $ys_{n-1}>y$ and $y\leq_R w$
(for the latter, see \cite[Lemma~13]{MM1}). 
Moreover, any top of the non-zero module $\theta_d L_y$
is a top of a module of the form $\theta L_w$, where $\theta$ is
a projective functor. Therefore $y\sim_R w$, see \cite[Corollary~14]{MM1}.

We have $s_k d<d$, for $k=2,3,\dots,n-3$. This means that 
$\theta_d\circ \theta_{s_k}\cong\theta_d\oplus\theta_d$, for all such $k$.
If, for some such $k$, we would have $ys_k>y$, then 
$\theta_{s_k}L_y=0$ and hence 
\begin{displaymath}
\theta_d L_y\oplus \theta_d L_y\cong
\theta_d\circ\theta_{s_k}(L_y)\cong 0.
\end{displaymath}
Therefore $ys_k<y$, for all such $k$.

The only element $y\sim_R w$ which satisfies $y\neq w$ and
both $ys_{n-1}>y$ and $ys_k<y$, for all $k=2,3,\dots,n-3$, is the following
element:
\begin{displaymath}
{\color{teal}(n-1)(n-2)(n-3)\dots 4312}n.
\end{displaymath}
Note that this element is longer than $ws_{n-1}$ but does not contain
$s_{n-1}$ as a factor in any reduced expression. Therefore this elements is not
related to $ws_{n-1}$ in the Bruhat order and hence $\mu(y,ws_{n-1})=0$.
This means that such $L_y$ do not appear in $\mathbf{J}_{s_{n-1}}(L_{ws_{n-1}})$.
This completes the proof of our corollary.
\end{proof}

From Corollary~\ref{cor-s4.3-6}, we obtain
$\mathbf{Kh}^{(III)}(w)=-$ and hence 
$\mathbf{Kh}^{(II)}(w)=-$ as well.
Consequently, $\mathbf{K}(w)=-$
by \cite[Theorem~B]{KMM}.
This completes the proof of Theorem~\ref{thm-cusp1}.

\subsection{Second infinite family of examples}\label{s5.3}

\begin{theorem}\label{thm-cusp2}
For $n\geq 7$, the element
$32145\dots (n-3)n(n-1)(n-2)$ (one-line notation) of $S_n$ is Kostant cuspidal.
\end{theorem}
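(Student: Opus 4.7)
The plan is to follow the two-step template used in the proof of Theorem~\ref{thm-cusp1}: Step~1, show that all proper consecutive patterns of $w$ are Kostant positive; Step~2, show that $w$ itself is Kostant negative. Write $w = (1,3)(n-2,n)$. Observe that $w$ is an involution and is moreover invariant under the flip $i\mapsto n+1-i$ of the Dynkin diagram; a direct RSK computation gives $P_w$ of shape $(n-4,2,2)$.

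For Step~1, by Theorem~\ref{thm-main} I only need to verify Kostant positivity of the two length-$(n-1)$ patterns of $w$ obtained by removing $w(1)$ or $w(n)$, and the flip symmetry above reduces this to a single case. Removing the last entry yields $u = (1,3)\cdot s_{n-2}\in S_{n-1}$, a product of involutions with disjoint supports $\{s_1,s_2\}$ and $\{s_{n-2}\}$, distant for $n\geq 7$; the recording tableau $Q_u$ has shape $(n-3,2,1)$. The plan is to verify Kostant positivity of $u$ by either (i) locating, in the same left cell as $u$, an element whose Kostant positivity is already known in the style of Lemma~\ref{lem-s4.3-1} --- a natural candidate is a product $w_0' w_0 \in S_{n-1}$ for a suitably chosen longest element $w_0'$ of a parabolic, so that \cite[Theorem~4.4]{GJ} together with Proposition~\ref{ex-s4.15-1} apply --- or (ii) verifying K{\aa}hrstr{\"o}m's criterion~(II) of Conjecture~\ref{Kh-conj} directly on $u$, using the factorization $\underline{H}_u = \underline{H}_{(1,3)}\underline{H}_{s_{n-2}}$ provided by Lemma~\ref{lem-disjointsupports} combined with the formulas~\eqref{eq:kl-mu}--\eqref{eq:kl-mu4}.

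For Step~2, by \cite[Theorem~B]{KMM} I only need to produce distinct $x,y\in S_n$ with $\theta_x L_w\cong\theta_y L_w\neq 0$. Following the strategy used for Theorem~\ref{thm-cusp1}, the plan is to construct an involution $d\in S_n$ with $d\leq_R w$ (so that $\theta_d L_w\neq 0$ by \cite[Formula~(1)]{KiM}) together with simple reflections $s_i, s_j$ satisfying $s_js_id\sim_L d$ and $s_js_id\neq d$, and then establish the isomorphism $\theta_d L_w\cong \theta_{s_js_id}L_w$ by analyzing the Jantzen middle $\mathbf{J}_{s_i}(L_{ws_i})$: showing that $L_w$ appears with multiplicity one, and that $\theta_d$ annihilates every other simple summand of that Jantzen middle. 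The argument would parallel Lemmata~\ref{lem-s4.3-3}--\ref{lem-s3.3-9} and Corollary~\ref{cor-s4.3-6}.

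The main obstacle will be finding the correct $d$ and verifying the annihilation property in Step~2. In Theorem~\ref{thm-cusp1}, the single long middle reversal in $w$ suggested a natural candidate $d$; here $w$ has two short reversals placed symmetrically at opposite ends of the one-line notation, so $d$ must be engineered with the shape $(n-4,2,2)$ of $P_w$ in mind, and one naturally expects $d$ to break the flip symmetry by perturbing only one of the two ends. The annihilation of competing summands of the Jantzen middle by $\theta_d$ should reduce, as in Lemma~\ref{lem-s3.3-9}, to incomparability of recording tableaux in the dominance order via \cite[Subsection~3.3]{Ta}. A secondary obstacle is the left-cell identification in Step~1: natural parabolic choices recover the correct two-sided cell (shape $(n-3,2,1)$) but need not give the correct left cell, so this identification may require some care, with option~(ii) or GAP3 verification in small ranks as a fallback.
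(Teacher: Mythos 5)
Your Step~1 is close in spirit to what is needed, but it contains a slip and leaves the key point open. The slip: for $u=v_{n-1}=s_1s_2s_1s_{n-2}\in S_{n-1}$ the recording tableau has shape $(n-4,2,1)$ (a partition of $n-1$), not $(n-3,2,1)$. The open point: $u$ is not fully commutative (it contains the factor $s_1s_2s_1$), so Theorem~\ref{thm-s7.2-1} says nothing about it, and your option~(i) of locating a plain $w_0'w_0$ in its left cell via \cite[Theorem~4.4]{GJ} is not the mechanism that works. What does work (Proposition~\ref{prop-s5.4-n1}) is to locate in the same left cell an element of the form $\gamma w_0'w_0$ where $\gamma=s_1s_{n-3}$ is a product of two \emph{distant} transpositions, Kostant positive by \cite{MMM}, and then to invoke \cite[Theorem~1.1]{Ka}, which handles $\gamma w_0'w_0$ for arbitrary $\gamma$ in the parabolic subgroup; this is also exactly where the hypothesis $n\geq 7$ enters (the two transpositions must be distant). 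Your fallback of GAP3 checks in small ranks of course cannot establish the claim for all $n$.

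The serious gap is Step~2. You propose to transplant the template of Theorem~\ref{thm-cusp1} (an involution $d\leq_R w$, two adjacent simple reflections, a Jantzen-middle analysis), but you do not exhibit $d$ and you concede that finding it is ``the main obstacle'' --- which is to say that the entire content of the negativity proof is missing. Moreover the template does not transfer. The witnesses that actually realise condition~(II) for $u_n$ are $\theta_{s_1y}L_{u_n}\cong\theta_{s_{n-1}y}L_{u_n}$ for the long-range transposition $y=(2,n-1)$: these two functor labels are flip-symmetric images of one another (so the flip symmetry is exploited, not broken, contrary to your guess), they are not obtained by perturbing an involution by two adjacent simple reflections, and the isomorphism is proved by showing that both sides are isomorphic to $\theta_yL_x$ for an explicit $x$ with $\mu(x,u_n)=1$. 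Carrying this out requires several inputs absent from your plan: the vanishing $\theta_yL_{u_n}=0$, obtained from a Loewy-length bound via Lusztig's $\mathbf{a}$-function \cite[Corollary~7]{Ma2}; the computation $\mu(x,u_n)=1$, which rests on the closed formula $p_{s_1s_{n-1},(1,n)}=(1+q)^{n-3}$ of \cite{SSV} transported along Bruhat-interval isomorphisms using \cite[Theorem~3.7]{BBDVW}; and a case analysis showing that $L_x$ is the only simple subquotient of $\theta_{s_1}L_{u_n}$ not killed by $\theta_y$. Incomparability of recording tableaux in the dominance order via \cite{Ta}, your proposed mechanism, plays no role here. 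In sum, the proposal is an outline whose decisive steps are not supplied and whose guiding analogy points away from an argument that actually closes.
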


The element mentioned in Theorem~\ref{thm-cusp2} above has a reduced
expression of length six, that is: $s_1s_2s_1s_{n-2}s_{n-1}s_{n-2}$.

\subsection{Proof of Theorem~\ref{thm-cusp2}}\label{s5.4}

In order to prove Theorem~\ref{thm-cusp2}, we only need to prove two things:
\begin{itemize} 
\item that the element $u_n:=s_1s_2s_1s_{n-2}s_{n-1}s_{n-2}$
is Kostant negative, for $n\geq 7$;
\item as well as that the element 
$v_n:=s_1s_2s_1s_{n-1}$ is Kostant positive, for $n\geq 6$.
\end{itemize}
This is due to the fact that the two consecutive patterns 
of length $n-1$ in $u_n$ are both equal to $v_{n-1}$, up to the
symmetry of the root system. We start with the second claim.

\begin{proposition}\label{prop-s5.4-n1}
For $n\geq 6$, the element  $v_n:=s_1s_2s_1s_{n-1}$ is Kostant positive.
\end{proposition}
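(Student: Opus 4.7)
The element $v_n$ has length $4$ and is the longest element $w_0^K$ of the standard parabolic $W_K \leq S_n$ with $K = \{s_1, s_2, s_{n-1}\}$. Since $n \geq 6$, the two components $\{s_1, s_2\}$ and $\{s_{n-1}\}$ have disjoint supports, so Lemma~\ref{lem-disjointsupports} gives the factorisation $\underline{H}_{v_n} = \underline{H}_{s_1 s_2 s_1}\,\underline{H}_{s_{n-1}}$ in $\mathbf{H}_n$. This reflects that $L_{v_n}$ is, in a sense, a ``product'' of the simple modules for two commuting parabolic directions.

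The plan mirrors the proof of Lemma~\ref{lem-s4.3-1}: find a Kostant positive representative $u$ of the left Kazhdan-Lusztig cell of $v_n$, and then conclude by the left-cell invariance of $\mathbf{K}$ from \cite[Theorem~61]{MS1}. First, direct Robinson-Schensted insertion on the one-line notation $3\,2\,1\,4\,\ldots\,(n-2)\,n\,(n-1)$ of $v_n$ shows that $Q_{v_n}$ has shape $(n-3,2,1)$, with row $1$ equal to $1,4,5,\ldots,n-1$, row $2$ equal to $2,n$, and row $3$ equal to $3$. Natural candidates for $u$ include elements of the form $w_0^J w_0$ (Kostant positive by \cite[Theorem~4.4]{GJ}) or products of pairwise distant special involutions (Kostant positive by Theorem~\ref{thm-s7.2-1}); for each subset $J \subseteq \{s_1,\ldots,s_{n-1}\}$ whose associated Levi composition rearranges to $(n-3,2,1)$, one computes $Q_{w_0^J w_0}$ and tests whether it matches $Q_{v_n}$.

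If this direct match fails (as a small-rank check with $n=6$ suggests may happen for the obvious candidates), the fallback is to verify K{\aa}hrstr{\"o}m's criterion \cite[Theorem~B]{KMM} for $v_n$ directly: show that (i) $\theta_y L_{v_n}$ is zero or indecomposable for every $y \in S_n$, and (ii) no two distinct $x, y \in S_n$ give $\theta_x L_{v_n} \cong \theta_y L_{v_n} \neq 0$. Through condition (V) of Conjecture~\ref{Kh-conj}, this becomes distinctness of the products $\underline{\hat{H}}_{v_n}\,\underline{H}_x$ for different $x$ (when non-zero), for which it suffices to consider $x$ with $x^{-1} \leq_L v_n$. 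Since $v_n$ has length $4$ and the Bruhat interval $[e, v_n]$ contains only $12$ elements, iterated application of \eqref{eq:kl-mu3}--\eqref{eq:kl-mu4}, together with the above factorisation, should make these computations explicit.

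The main obstacle is achieving uniformity in $n$. The element $v_n$ involves only the simple reflections at the two ends of the Dynkin diagram, so its local Kazhdan-Lusztig picture near $v_n$ is essentially independent of $n$; a reduction along a Serre-quotient equivalence in the spirit of \cite[Theorem~37]{CMZ}, as used in the proof of Theorem~\ref{thm-main}, should make this independence rigorous and reduce the problem to a fixed small rank, at which point either the $w_0^J w_0$ matching or the direct K{\aa}hrstr{\"o}m verification can be completed uniformly.
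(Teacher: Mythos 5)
Your high-level strategy (compute $Q_{v_n}$, find a Kostant positive element in the same left cell, conclude by left-cell invariance) is exactly the paper's, and your recording tableau for $v_n$ is correct. But the proposal does not actually produce the representative, and both of your candidate classes fail for reasons you can check: products of pairwise distant special involutions are fully commutative, hence have insertion tableaux with at most two rows, so none of them lies in the two-sided cell of shape $(n-3,2,1)$; and the left cell of $v_n$ contains no element of the form $w_0^J w_0$ (there are only finitely many compositions rearranging to $(n-3,2,1)$ but many more left cells in that two-sided cell), which is precisely the failure your $n=6$ check detects. The missing ingredient is K{\aa}hrstr{\"o}m's theorem \cite[Theorem~1.1]{Ka} in its full strength: for $\gamma$ in the parabolic subgroup $S_{n-1}$ one has $\mathbf{K}(\gamma w'_0 w_0)=\mathbf{K}(\gamma)$, where $w'_0$ and $w_0$ are the longest elements of $S_{n-1}$ and $S_n$. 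Taking $\gamma=s_1s_{n-2}\in S_{n-1}$, which is Kostant positive by \cite[Theorem~5.1]{MMM} as a product of two distant transpositions (this needs $n\geq 6$), the element $\gamma w'_0w_0=n213\dots(n-4)(n-3)(n-1)(n-2)$ is Kostant positive and has the same recording tableau as $v_n$, which finishes the proof.

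Your fallback is not viable as written. The set of $x$ with $x^{-1}\leq_L v_n$, which governs when $\underline{\hat{H}}_{v_n}\underline{H}_x\neq 0$, is a union of left cells and is much larger than the Bruhat interval $[e,v_n]$ (for instance it contains $e$ and everything below $v_n$ in the left Kazhdan--Lusztig order); the ``$12$ elements'' count conflates the two orders. Moreover, the proposed reduction to fixed small rank via \cite[Theorem~37]{CMZ} cannot work here: the one-line notation of $v_n$ displaces entries at both ends of $\{1,\dots,n\}$, so every proper consecutive pattern genuinely loses information, and Kostant positivity of a pattern does not propagate upward (only negativity does, by Theorem~\ref{thm-main}).
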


\begin{proof}
Let $w_0$ be the longest element in $S_n$
and $w'_0$ the longest element in the parabolic subgroup
$S_{n-1}$ of $S_n$ consisting of all permutations that fix $n$.
Consider the element 
$w:=s_1s_{n-2}\in S_{n-1}$. We claim that $v_n\sim_L ww'_0w_0$.
It is easy to see that the recording tableau of $v_n$ is
\begin{equation}\label{eqeq-s5v}
\begin{ytableau}
\scriptstyle{1}&\scriptstyle{4}&\scriptstyle{5}&\scriptstyle{\cdots}
&\resizebox{4mm}{!}{$\scriptstyle{n-1}$}\\
\scriptstyle{2}&\scriptstyle{n}\\
\scriptstyle{3}
\end{ytableau}
\end{equation}
In the one-line notation, the element $ww'_0w_0$ is
\begin{displaymath}
n213...(n-4)(n-3)(n-1)(n-2) 
\end{displaymath}
and it is straightforward to check that the 
recording tableau of this element is given by \eqref{eqeq-s5v} as well.
This prove that $v_n\sim_L ww'_0w_0$.

Since $n\geq 6$, the element $w$ is Kostant positive by 
\cite[Theorem~5.1]{MMM} (using the terminology of \cite[Theorem~5.1]{MMM},
the element $w$ is the product of two distant elementary transpositions). 
Hence $ww'_0w_0$ is Kostant positive by
\cite[Theorem~1.1]{Ka}. Consequently, $v_n$ is Kostant positive 
by the left cell invariance.
\end{proof}

To prove that $u_n$ is Kostant negative will require significantly more
effort and a lot of preparation.

Define $y\in S_n$ as the transposition $(2,n-1)$.
Our aim is to prove that there is an isomorphism 
$\theta_{s_1y}L_{u_n}\cong \theta_{s_{n-1}y}L_{u_n}\neq 0$. 
This would imply that  $u_n$ is Kostant negative by \cite[Theorem~B]{KMM}.
Define $x\in S_n$ as the following permutation
(in one-line notation):
\begin{displaymath}
{\color{teal}3n2}45\dots (n-4)(n-3){\color{teal}(n-1)1(n-2)}. 
\end{displaymath}
The {\color{teal}teal} parts specifies the elements that are
displaced by $x$.

We will now establish a few technical lemmata about these elements.
All these statements were suggested following small rank computations
with GAP3. Recall that $s_i=(i,i+1)$, where $i=1,2,\dots,n-1$,
are the simple reflections in $S_n$.

\begin{lemma}\label{lem-s7.4-25}
If $n\geq 3$, then, for any $a\leq (2,n)$,
we have $\mu(a,(1,n))=0$.
\end{lemma}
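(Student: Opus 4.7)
The plan is to combine the parabolic structure of the Bruhat interval $[e,(2,n)]$ with the standard descent-set vanishing property of the Kazhdan--Lusztig $\mu$-function, applied with the simple reflection $s_1$.

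First, I would identify every $a\le (2,n)$ as a permutation fixing the value $1$. Indeed, $(2,n)$ lies in the standard parabolic subgroup $W_I=\langle s_2,\dots,s_{n-1}\rangle\cong S_{n-1}$ of $S_n$, whose elements are exactly the permutations fixing $1$. Since the Bruhat order on $S_n$ restricts to the Bruhat order on any standard parabolic subgroup, every $a\le (2,n)$ also lies in $W_I$, so $a(1)=1$. This forces $a^{-1}(1)=1<a^{-1}(2)$, so $s_1 a>a$, i.e. $s_1\notin D_L(a)$. On the other hand, from the one-line notation $(1,n)=n\,2\,3\cdots (n-1)\,1$ one reads off $(1,n)^{-1}(1)=n>2=(1,n)^{-1}(2)$, so $s_1\in D_L((1,n))$.

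The central step is to invoke the classical criterion: if $a<w$ in $S_n$ and $s$ is a simple reflection with $s\in D_L(w)\setminus D_L(a)$, then $\mu(a,w)=0$ unless $a=sw$. This is a well-known consequence of \eqref{eq:kl-mu} combined with the bar-invariance and positivity of the KL basis; I would either cite \cite{KL} or give a short derivation by applying \eqref{eq:kl-mu} to $\underline{H}_s\underline{H}_{s(1,n)}$ and comparing with the bar-invariant expansion of $\underline{H}_{(1,n)}$. Applied with $w=(1,n)$ and $s=s_1$, this reduces the lemma to ruling out the case $a=s_1(1,n)$. But $s_1(1,n)$ in one-line notation equals $n\,1\,3\cdots (n-1)\,2$, whose first entry is $n\ne 1$; hence $s_1(1,n)\notin W_I$, and in particular $s_1(1,n)\not\le (2,n)$. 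This completes the argument.

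The only real obstacle I foresee is matching the descent-set vanishing criterion precisely to Soergel's $v$-normalization used in this paper; if a convenient direct citation is not at hand, the criterion admits a short self-contained derivation starting from \eqref{eq:kl-mu} and the uniqueness of the bar-invariant element with leading term $H_{(1,n)}$.
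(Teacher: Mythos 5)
Your proof is correct and follows essentially the same route as the paper's: both arguments rest on the observation that $s_1$ lies in the left descent set of $(1,n)$ but not of any $a\leq (2,n)$, combined with the standard descent-set vanishing criterion for $\mu$ (which the paper cites as \cite[Fact~3.2]{Wa}). You simply spell out the two details the paper leaves implicit — the parabolic argument showing every $a\le(2,n)$ fixes $1$, and the exclusion of the exceptional case $a=s_1(1,n)$ — so no changes are needed.
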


\begin{proof}
Since $\ell((2,n))<\ell((1,n))$ and $a\leq (2,n)$,
we have $\ell(a)<\ell((1,n))$. Note that $s_1$
belongs to the left descent set of $(1,n)$ but
not to the left descent set of $a$. Therefore
$\mu(a,(1,n))=0$ by \cite[Fact~3.2]{Wa}.
\end{proof}

\begin{lemma}\label{lem-s7.4-2}
For $i,j\in\{1,2,\dots,n-1\}$ such that $1<i<j-1$, we have: 
\begin{displaymath}
\theta_{(i,j)}\theta_{s_{i-1}}\cong\theta_{s_{i-1}(i,j)},\quad
\theta_{s_{i-1}}\theta_{(i,j)}\cong\theta_{(i,j)s_{i-1}} \quad
\end{displaymath}
and
\begin{displaymath}
\theta_{(i,j)s_{i-1}}\theta_{s_{i-1}}\cong
\theta_{s_{i-1}}\theta_{s_{i-1}(i,j)}\cong
\theta_{(i-1,j)} .
\end{displaymath}
\end{lemma}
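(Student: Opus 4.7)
The plan is to reduce each claimed functor isomorphism to an identity of products of Kazhdan--Lusztig basis elements in $\mathbf{H}_n$ via the split Grothendieck isomorphism $\mathbf{Gr}_\oplus[{}^\mathbb{Z}\mathscr{P}]\cong\mathbf{H}_n^{\mathrm{op}}$ of Subsection~\ref{s2.3}. Since $\mathscr{P}$ is Krull--Schmidt with pairwise non-isomorphic indecomposables $\theta_w$, any equality $[\theta_a\theta_b]=[\theta_c]$ lifts to an isomorphism $\theta_a\theta_b\cong\theta_c$. Because the isomorphism reverses multiplication, a composition $\theta_a\theta_b$ corresponds to $\underline{H}_b\underline{H}_a$, so the three claims become
\begin{align*}
\underline{H}_{s_{i-1}}\underline{H}_{(i,j)} &= \underline{H}_{s_{i-1}(i,j)},\\
\underline{H}_{(i,j)}\underline{H}_{s_{i-1}} &= \underline{H}_{(i,j)s_{i-1}},\\
\underline{H}_{s_{i-1}}\underline{H}_{(i,j)s_{i-1}} = \underline{H}_{(i-1,j)} &= \underline{H}_{s_{i-1}(i,j)}\underline{H}_{s_{i-1}}.
\end{align*}

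I would first verify by direct one-line inspection that $s_{i-1}$ does not lie in the relevant left or right descent set of any of $(i,j)$, $(i,j)s_{i-1}$, or $s_{i-1}(i,j)$, and that the conjugation identity $s_{i-1}(i,j)s_{i-1}=(i-1,j)$ identifies the leading terms on the right. Each product is then length-additive, and applying \eqref{eq:kl-mu}--\eqref{eq:kl-mu2} expresses it as the desired leading Kazhdan--Lusztig element plus a correction $\sum_x \mu(w,x)\underline{H}_x$ summed over $x<w$ having $s_{i-1}$ in the appropriate descent set.

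For the first two identities the correction is vacuously zero: the support of $(i,j)$ is $\{s_i,\dots,s_{j-1}\}$, which does not contain $s_{i-1}$, so by the subword characterisation of Bruhat order every $x\leq(i,j)$ has support in $\{s_i,\dots,s_{j-1}\}$, and $s_{i-1}$ cannot appear in any descent set of such an $x$. For the third identity the support does include $s_{i-1}$, and one must instead invoke the standard Kazhdan--Lusztig fact from \cite{KL} that $\mu(w,x)\neq 0$ together with $s$ being a descent of $w$ forces either $x=ws$ (resp.\ $sw$) or $s$ to be a descent of $x$. A direct check shows that the exceptional candidates $ws_i$ and $ws_{j-1}$ arising from the descent set $\{s_i,s_{j-1}\}$ of the long factor do not contain $s_{i-1}$ in the relevant descent set, so any surviving $x$ must satisfy $\{s_{i-1},s_i,s_{j-1}\}\subseteq\mathcal{R}(x)$, with the symmetric left-descent statement obtained by inverting both $x$ and $w$.

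The main obstacle is then to rule out such an $x$ with $x\leq w=s_{i-1}(i,j)$ (the left-descent version being handled identically on $w^{-1}$). The plan is to combine the Bruhat criterion that the sorted first $i-1$ values of $x$ are pointwise dominated by $(1,2,\dots,i-2,i)$, namely the sorted first $i-1$ values of $w$, with the descending chain $x(i-1)>x(i)>x(i+1)$ forced by $s_{i-1},s_i\in\mathcal{R}(x)$. A short case analysis on the set $\{x(1),\dots,x(i-1)\}$, which the Bruhat bound forces to equal either $\{1,\dots,i-1\}$ or $\{1,\dots,i-2,i\}$, derives a contradiction in each case --- either $x(i)$ is squeezed out of the allowed range of remaining values, or $x(i+1)$ is forced to repeat a value already used in one of the first $i-1$ positions. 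Together with the vanishing of the other correction sums, this establishes all three Hecke-algebra identities and hence, via Krull--Schmidt, the three functor isomorphisms.
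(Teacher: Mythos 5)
Your proposal is correct, and the overall skeleton coincides with the paper's: decategorify to $\mathbf{H}_n^{\mathrm{op}}$, apply the multiplication formulas \eqref{eq:kl-mu}--\eqref{eq:kl-mu2}, observe that the first two isomorphisms have empty correction sums because $s_{i-1}$ lies outside the support of $(i,j)$ (hence outside the descent sets of everything below it in the Bruhat order), and then reduce the third isomorphism to the vanishing of $\mu(x,s_{i-1}(i,j))$ for all $x$ below $s_{i-1}(i,j)$ admitting $s_{i-1}$ as a descent. Where you genuinely diverge is in how that vanishing is established. The paper transports the relevant Kazhdan--Lusztig polynomials across the multiplication-by-$s_{i-1}$ bijection of Bruhat intervals using the combinatorial invariance result of \cite[Theorem~3.7]{BBDVW}, and then invokes Lemma~\ref{lem-s7.4-25} (itself a descent-set argument via \cite[Fact~3.2]{Wa}) on the transported interval $[a',(i,j)]$. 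You instead stay on the original interval: the classical fact that $\mu(x,w)\neq 0$ with $ws<w$ and $xs>x$ forces $x=ws$, applied to the right descents $s_i$ and $s_{j-1}$ of $s_{i-1}(i,j)$ (whose exceptional coatoms you correctly check do not have $s_{i-1}$ as a descent), pins down $\{s_{i-1},s_i,s_{j-1}\}\subseteq\mathcal{R}(x)$, and the tableau criterion for $x\leq s_{i-1}(i,j)$ at level $i-1$ then yields a clean contradiction in both cases of your analysis (I verified the squeeze: in either case $x(i)$ must be simultaneously $<x(i-1)\leq i$ and $\geq i$, resp.\ $\geq i+1$). Your route is more elementary and self-contained --- it avoids the AI-inspired combinatorial invariance theorem entirely --- at the cost of a slightly longer hands-on Bruhat-order computation; the paper's route is shorter on the page but leans on heavier external input.
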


\begin{proof}
From \eqref{eq:kl-mu2}, for $w\in W$ and a simple reflection $s$, we have 
\begin{equation}\label{eq-s7.4.123}
\theta_s\circ \theta_w\cong
\begin{cases}
\theta_w\oplus \theta_w,& ws<w;\\
\displaystyle
\theta_{ws}\oplus \bigoplus_{u<w,us<u}\theta_u^{\oplus \mu(u,w)},& ws>w;
\end{cases}
\end{equation}
and similarly for the multiplication with  $\theta_s$
on the other side (using \eqref{eq:kl-mu}).

Now note that $s_{i-1}$ does not appear in any reduced expression of
$(i,j)$. If we use \eqref{eq-s7.4.123} to compute
$\theta_{s_{i-1}}\theta_{(i,j)}$, we get that no $u<(i,j)$ satisfies 
$us_{i-1}<u$ so that the corresponding sum becomes empty. This implies
that $\theta_{s_{i-1}}\theta_{(i,j)}$ is isomorphic to 
$\theta_{(i,j)s_{i-1}}$. Similarly, we have that 
$\theta_{(i,j)}\theta_{s_{i-1}}$ is isomorphic to 
$\theta_{s_{i-1}(i,j)}$.

Let us now prove that $\theta_{s_{i-1}}\theta_{s_{i-1}(i,j)}\cong
\theta_{(i-1,j)}$, the proof of the remaining formula
$\theta_{(i,j)s_{i-1}}\theta_{s_{i-1}}\cong
\theta_{(i-1,j)}$ is similar. Using  \eqref{eq-s7.4.123}, we
need to understand all $a<s_{i-1}(i,j)$ such that 
$as_{i-1}<a$. Then $a$ cannot have any $s_{i}$ in its
reduced expression. This implies that $a=s_{i-1}a'$, where
$a'<(i+1,j)$. 

It is easy to see that multiplication with $s_{i-1}$
defines a bijection between the Bruhat intervals 
$[u,s_{i-1}(i,j)]$ and $(u',(i,j))$. Moreover, this
bijection preserves the preimage of $j$ for all elements.
From \cite[Theorem~3.7]{BBDVW} 
it thus follows that the Kazhdan-Lusztig polynomials
$p_{a,s_{i-1}(i,j)}$ and $p_{a',(i,j)}$ coincide.
From Lemma~\ref{lem-s7.4-25}, we have
$\mu(a',(i,j))=0$, and the claim follows.
\end{proof}

\begin{lemma}\label{lem-s7.4-3}
We have $\theta_{y}L_{u_n}=0$. 
\end{lemma}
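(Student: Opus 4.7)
The idea is to use Lemma~\ref{lem-s7.4-2} to factor the projective functor $\theta_y$. Applying that lemma with $i=3$ and $j=n-1$ (the hypothesis $1<i<j-1$ then reads $1<3<n-2$, which holds for $n\geq 7$) gives the functorial isomorphism
\[
\theta_y \;=\; \theta_{(2,n-1)}\;\cong\;\theta_{s_2}\circ \theta_{s_2(3,n-1)}.
\]
Evaluated at $L_{u_n}$ this gives $\theta_y L_{u_n}\cong \theta_{s_2}\bigl(\theta_{s_2(3,n-1)}L_{u_n}\bigr)$, so it is enough to show that $\theta_{s_2}$ annihilates the module $\theta_{s_2(3,n-1)}L_{u_n}$. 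The symmetric factorisation $\theta_y\cong \theta_{(3,n-1)s_2}\circ\theta_{s_2}$, also provided by Lemma~\ref{lem-s7.4-2}, is kept in reserve as an alternative route.

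Since $\theta_{s_2}$ is exact on $\mathcal{O}_0$ and, by formula~\eqref{eq:kl-mu4}, $\theta_{s_2}L_w=0$ whenever $ws_2>w$, the problem reduces to showing that every simple constituent $L_w$ appearing in $\theta_{s_2(3,n-1)}L_{u_n}$ satisfies $ws_2>w$. I would identify those constituents by iterating Lemma~\ref{lem-s7.4-2}, decomposing the three-cycle $s_2(3,n-1)=(2,3,n-1)$ further into products of short transpositions and simple reflections, and expanding the resulting compositions of simple-reflection functors via \eqref{eq:kl-mu3}--\eqref{eq:kl-mu4}. Lemma~\ref{lem-s7.4-25} plays the crucial role of cancelling the $\mu$-contributions coming from transpositions of the form $(1,k)$, which are precisely the contributions that could introduce simples violating the required descent condition.

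The structural input from $u_n$ drives the calculation: its right descent set is $\{s_1,s_2,s_{n-2},s_{n-1}\}$ and its insertion tableau $P_{u_n}$ has the rigid shape $(n-3,2,2)$, with the entry $2$ locked in the second row. This rigidity forces the simples surviving in the iterated composition to have insertion tableaux in which $s_2$ never becomes a right descent, so that the desired annihilation holds.

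The main obstacle is the bookkeeping through this iteration: one must verify that along the way no simple $L_w$ with $ws_2<w$ enters, and this requires combining the systematic use of Lemma~\ref{lem-s7.4-25} with a careful Robinson--Schensted analysis at each step. If a fully combinatorial argument proves too cumbersome, a GAP3-assisted verification of small base cases, followed by a propagation argument in the spirit of Theorem~\ref{thm-main}, should also suffice to complete the proof.
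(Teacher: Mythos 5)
Your opening reduction is sound as far as it goes: Lemma~\ref{lem-s7.4-2} with $i=3$, $j=n-1$ does give $\theta_y\cong\theta_{s_2}\circ\theta_{s_2(3,n-1)}$, and since $\theta_{s_2}$ is exact and kills $L_w$ exactly when $ws_2>w$, the lemma is equivalent to the assertion that no composition factor of $\theta_{s_2(3,n-1)}L_{u_n}$ has $s_2$ in its right descent set. But at this point you have merely restated the lemma in an equivalent form and then described, in very general terms, a computation you do not perform. Determining the composition factors of $\theta_{s_2(3,n-1)}L_{u_n}$ uniformly in $n$ is a genuine Kazhdan--Lusztig computation (an expansion of a product of a dual KL basis element with a KL basis element), and neither Lemma~\ref{lem-s7.4-25} (which only controls the $\mu$-values $\mu(a,(1,n))$ for $a\leq(2,n)$) nor ``rigidity of the insertion tableau'' delivers it. Note also that right descents of $w$ are governed by the recording tableau $Q_w$, not by $P_w$, and that $P_{u_n}$ has shape $(n-4,2,2)$, not $(n-3,2,2)$. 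The fallback via ``propagation in the spirit of Theorem~\ref{thm-main}'' is likewise unsubstantiated: that theorem propagates Kostant negativity, not the vanishing of a specific $\theta_yL_{u_n}$, and any reduction to a small base case would itself require an argument of the kind carried out in Lemma~\ref{s5.6-lem4}.

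The missing idea is the Loewy-length obstruction, which is how the paper avoids the composition-factor analysis altogether. One iterates Lemma~\ref{lem-s7.4-2} all the way, writing $\theta_y$ as $\theta_{s_2}\cdots\theta_{s_{n-4}}\theta_{(n-3,n-1)}\theta_{s_{n-4}}\cdots\theta_{s_2}$ and viewing it as a summand of a product of simple $\theta_s$'s; a step-by-step computation shows that applying the string $\theta_{s_2},\dots,\theta_{s_{n-2}}$ to $L_{u_n}$ always yields a module of the form $\theta_s(\text{simple})$, hence of Loewy length $3$, so one further $\theta_{s_{n-3}}$ produces a module of Loewy length at most $5$. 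On the other hand, the summand $\theta_{s_2\cdots s_{n-3}s_{n-2}s_{n-3}}L_{u_n}$ inside it would, if nonzero, have Loewy length at least $2\cdot 3+1=7$ by \cite[Corollary~7]{Ma2}, because $\mathbf{a}(s_2\cdots s_{n-3}s_{n-2}s_{n-3})=3$. This forces that summand, and hence $\theta_yL_{u_n}$, to vanish. Without this obstruction, or a genuine substitute for the descent-set verification you postpone, your argument does not close.
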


\begin{proof}
Using Lemma~\ref{lem-s7.4-2}, we can write $\theta_{y}$ as
\begin{displaymath}
\theta_{s_{2}}\theta_{s_{3}}\dots
\theta_{s_{n-4}}\theta_{(n-3,n-1)}
\theta_{s_{n-4}}\dots\theta_{s_{3}}\theta_{s_{2}}
\end{displaymath}
and identify it as a summand of 
\begin{displaymath}
\theta_{s_{2}}\theta_{s_{3}}\dots
\theta_{s_{n-4}}\theta_{s_{n-3}}\theta_{s_{n-2}}\theta_{s_{n-3}}
\theta_{s_{n-4}}\dots\theta_{s_{3}}\theta_{s_{2}}.
\end{displaymath}

The module $\theta_{s_{2}}L_{u_n}$ has $L_{u_n}$
as both simple top and socle. It also has $L_{u_ns_3}$ as
a subquotient as $\mu(u_n,u_ns_3)=1$ since the 
elements $u_n$ and $u_ns_3$ are Bruhat neighbours. 

Note that $\theta_{s_{3}}\theta_{s_{2}}\cong \theta_{s_{2}s_{3}}$
by \cite[Theorem~1]{BW} and that $\theta_{s_{2}s_{3}}L_{u_n}$
is indecomposable by \cite[Subsection~3.3]{CMZ}. Since
$\ell(u_ns_3)>\ell(u_n)$, we have 
$\theta_{s_{3}}L_{u_n}=0$ and hence $L_{u_ns_3}$
must be the only subquotient of $\theta_{s_{2}}L_{u_n}$
that is not killed by $\theta_{s_{3}}$. This implies that 
$\theta_{s_{2}s_{3}}L_{u_n}$ is isomorphic to
$\theta_{s_{3}}L_{u_ns_3}$.

Proceeding inductively, we similarly get that
\begin{displaymath}
\theta_{s_{2}s_{3}\dots s_{n-2}} L_{u_n}\cong
\theta_{s_{n-2}}L_{u_ns_2s_{3}\dots s_{n-2}},
\end{displaymath}
in particular, this latter module has Loewy length $3$. 
Therefore, applying now $\theta_{s_{n-3}}$, we get a module of Loewy length
at most $5$. 

By Lemma~\ref{lem-s7.4-2}, this new module should
be isomorphic to
\begin{displaymath}
\theta_{s_{2}s_{3}\dots s_{n-3}}L_{u_n}\oplus
\theta_{s_{2}s_{3}\dots s_{n-3}s_{n-2}s_{n-3}}L_{u_n}. 
\end{displaymath}
Now we note that the value of Lusztig's $\mathbf{a}$-function
at the element 
\begin{displaymath}
s_{2}s_{3}\dots s_{n-3}s_{n-2}s_{n-3} 
\end{displaymath}
is $3$ (as it has the factor $s_{n-3}s_{n-2}s_{n-3}$).
In particular, from \cite[Corollary~7]{Ma2} it follows that 
the Loewy length of any non-zero module of the form
\begin{displaymath}
\theta_{s_{2}s_{3}\dots s_{n-3}s_{n-2}s_{n-3}}L_w
\end{displaymath}
must be at least $2\cdot3 +1=7$.  This means that
$\theta_{s_{2}s_{3}\dots s_{n-3}s_{n-2}s_{n-3}}L_{u_n}=0$ and therefore 
$\theta_{y}L_{u_n}=0$ follows from Lemma~\ref{lem-s7.4-3}, completing the proof.
\end{proof}

\begin{lemma}\label{lem-s7.4-4}
We have $\mu(x,{u_n})=1$, in particular,
$L_x$ appears as a subquotient of $\theta_{s_{1}}L_{u_n}$
with multiplicity $1$.
\end{lemma}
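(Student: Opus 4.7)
The plan is to establish $\mu(x,u_n)=1$ first, from which the composition multiplicity statement follows as a straightforward categorical consequence.

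For the categorical step, note that $u_ns_1<u_n$, since the one-line notation of $u_n$ begins $3,2,\ldots$, so right-multiplying by $s_1$ yields the prefix $2,3$ and removes one inversion. Thus \eqref{eq:kl-mu4} applies and gives
\[
\underline{\hat{H}}_{u_n}\underline{H}_{s_1}
= (v+v^{-1})\underline{\hat{H}}_{u_n} + \underline{\hat{H}}_{u_n s_1}
+ \sum_{\substack{y>u_n\\ys_1>y}} \mu(u_n,y)\,\underline{\hat{H}}_y.
\]
Under the isomorphism $[L_w]\mapsto\underline{\hat{H}}_w$ of Subsection~\ref{s2.3}, this computes the Jordan-H\"older multiplicities of $\theta_{s_1}L_{u_n}$. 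To apply this to $L_x$, one checks that $x>u_n$ and $xs_1>x$. The second is immediate: $x(1)=3<n=x(2)$, so swapping adds an inversion. For the first, compute $v:=u_n x$ and verify it is the 4-cycle $(2,n-2,n-1,3)$, with one-line notation $1,n-2,2,4,5,\ldots,n-3,n-1,3,n$ and length $2n-9$. Combined with $\ell(u_n)=6$ and $\ell(x)=2n-3$, this gives the length-additive factorisation $x=u_n\cdot v$, which places $u_n$ as a left prefix of a reduced expression for $x$; hence $u_n\leq x$ in Bruhat order. The composition multiplicity is therefore $\mu(u_n,x)=\mu(x,u_n)$.

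For the core claim $\mu(x,u_n)=1$, I would pursue the same strategy as in Lemma~\ref{lem-s7.4-2} and invoke the interval-isomorphism theorem \cite[Theorem~3.7]{BBDVW}. The key structural observation is that $u_n=(1,3)(n-2,n)$ is the commuting product of two disjoint-support cycles, and that $x$ connects these two supports through the intermediate points $2,n-1$, using only the ``bridging'' reflections $s_3,\ldots,s_{n-3}$ passively. The dual form of Lemma~\ref{lem-disjointsupports} gives $\underline{\hat{H}}_{u_n}=\underline{\hat{H}}_{(1,3)}\underline{\hat{H}}_{(n-2,n)}$. The aim is then to exhibit a bijection between the Bruhat interval $[u_n,x]$ in $S_n$ and the corresponding interval $[u_7,x_7]$ in $S_7$ (the smallest instance of the family, where the bridging is trivial) that preserves the preimage of some chosen point; by \cite[Theorem~3.7]{BBDVW}, this forces the equality of KL polynomials $p_{x,u_n}=p_{x_7,u_7}$, reducing the computation to a base case which can be verified directly via GAP3 to give $\mu(x_7,u_7)=1$.

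The main obstacle is constructing the interval bijection and confirming the preimage-preservation property required by \cite[Theorem~3.7]{BBDVW}. While it is morally clear that the bridging reflections play only a passive role, converting this intuition into a concrete bijection between Bruhat intervals in symmetric groups of different ranks is the principal technical task, particularly in enumerating which elements of $[u_n,x]$ correspond to which elements of $[u_7,x_7]$. Should the bijection approach prove too delicate, a fallback is to expand $\underline{H}_{s_1}\underline{\hat{H}}_{u_n}$ by iteratively applying \eqref{eq:kl-mu3} combined with the disjoint-support factorisation $\underline{\hat{H}}_{u_n}=\underline{\hat{H}}_{(1,3)}\underline{\hat{H}}_{(n-2,n)}$, then extract the coefficient of $\underline{\hat{H}}_x$ by bookkeeping through the rewriting.
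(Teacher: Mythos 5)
Your categorical step is fine and matches the paper's: you correctly verify $u_ns_1<u_n$, $xs_1>x$, and $x>u_n$ (via the length-additive factorisation $x=u_n\cdot(2,n-2,n-1,3)$ with $\ell(x)=2n-3$ --- which, incidentally, is the correct value; the paper's ``$\ell(x)=2n-4$'' is a slip, as its own degree count $\frac{\ell(x)-\ell(u_n)-1}{2}=n-5$ confirms), and then \eqref{eq:kl-mu4} gives the multiplicity statement. The gap is that you never prove the core claim $\mu(x,u_n)=1$, and the reduction you propose for it cannot work. An application of \cite[Theorem~3.7]{BBDVW} needs an isomorphism of Bruhat intervals, but $[u_n,x]$ is a graded poset of rank $\ell(x)-\ell(u_n)=2n-9$, whereas $[u_7,x_7]$ has rank $5$; for $n>7$ there is no poset isomorphism between them, so no ``bridge-collapsing'' bijection can satisfy the hypotheses. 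This is not a repairable technicality: the Kazhdan--Lusztig polynomial here is genuinely $n$-dependent, namely $p_{u_n,x}=(1+q)^{n-5}$, and $\mu(x,u_n)$ is its coefficient in the top admissible degree $n-5$. A polynomial inherited from a fixed rank-$5$ interval would have degree at most $2$ and would force $\mu(x,u_n)=0$ for large $n$. The bridging reflections are therefore not ``passive''.

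Your fallback is also not viable. Expanding $\underline{\hat{H}}_{u_n}\underline{H}_{s_1}$ via \eqref{eq:kl-mu4} takes as input precisely the numbers $\mu(u_n,z)$ you are trying to compute, so the bookkeeping is circular. Moreover, the factorisation $\underline{\hat{H}}_{u_n}=\underline{\hat{H}}_{(1,3)}\underline{\hat{H}}_{(n-2,n)}$ does not follow from Lemma~\ref{lem-disjointsupports}: that proof rests on the triangularity \eqref{eq-eqn125} of the KL basis over the lower Bruhat cone, and the dual basis has no such parabolic localisation (each $\underline{\hat{H}}_w$ is supported on the full upper cone of $w$ in $S_n$). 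The missing ingredient is an external closed formula. The paper invokes \cite[Theorem~2]{SSV}, $p_{s_1s_{n-1},(1,n)}=(1+q)^{n-3}$, hence $p_{s_2s_{n-2},(2,n-1)}=(1+q)^{n-5}$, and transports this to $p_{u_n,x}$ along two \emph{rank-preserving} interval isomorphisms inside the single group $S_n$ (left multiplication by the elements it calls $x_1s_2$ and $x_2s_{n-2}$), each of which fixes the preimage of a point so that \cite[Theorem~3.7]{BBDVW} applies; the value $\mu(x,u_n)=1$ is then read off from the degree and the leading coefficient. Without this input, or an equivalent computation of $p_{u_n,x}$ valid for all $n$, your argument does not establish the lemma.
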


\begin{proof}
We can write $x=(u_ns_{2}s_{n-2})y$, where
$\ell(x)=\ell(u_ns_{2}s_{n-2})+\ell(y)$. Note that
$\ell(u_ns_{2}s_{n-2})=\ell(u_n)-2$.
Consider the elements
$x_1=s_1s_2s_1$ and $x_2=s_{n-2}s_{n-1}s_{n-2}$.
Then we have $x=(x_1s_{2})(x_2s_{n-2})y$ and in this decomposition
we have
$\ell(x)=\ell(x_1s_{2})+\ell(x_2s_{n-2})+\ell(y)$.

By \cite[Theorem~2]{SSV}, for $n\geq 3$, the Kazhdan-Lusztig polynomial
$p_{s_1s_{n-1},(1,n)}$ equals $(1+q)^{n-3}$. Consequently, the
Kazhdan-Lusztig polynomial $p_{s_{2}s_{n-2},y}$ equals $(1+q)^{n-5}$.

It is easy to see that multiplication with $x_1s_{2}$ provides
an isomorphism between the Bruhat intervals $[s_{2}s_{n-2},y]$
and $[x_1s_{n-2},x_1s_{2}y]$. Moreover, this isomorphism fixes
the preimage of $n$ for all elements. From \cite[Theorem~3.7]{BBDVW} 
it thus follows that $p_{x_1s_{n-2},x_1s_{2}y}=p_{s_{2}s_{n-2},y}$.

Similarly, it is easy to see that multiplication with $x_2s_{n-2}$ provides
an isomorphism between the Bruhat interval $[x_1s_{n-2},x_1s_{2}y]$
and $[u_n,x]$. Moreover, this isomorphism fixes
the preimage of $1$ for all elements. From \cite[Theorem~3.7]{BBDVW} 
it thus follows that $p_{x_1s_{n-2},x_1s_{2}y}=p_{u_n,x}$.

We have $\ell(u_n)=6$, $\ell(y)=2n-7$ and hence $\ell(x)=2n-4$.
It remains to observe that the degree $n-5$ of this polynomial
coincides with $\frac{\ell(x)-\ell(u_n)-1}{2}$ and 
that the leading term equals $1$. This implies
$\mu(x,{u_n})=1$.

Since $\ell(xs_{1})>\ell(x)$ and $\mu(x,{u_n})=1$, the module
$L_x$ appears as a subquotient of $\theta_{s_{1}}L_{u_n}$
with multiplicity $1$. This completes the proof.
\end{proof}

\begin{lemma}\label{lem-s7.4-6}
We have $\theta_{y}L_x\neq 0$.
\end{lemma}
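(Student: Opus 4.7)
The plan is to show $\theta_y L_x\neq 0$ by translating it into a purely Hecke-theoretic statement and then reading the answer off the length-additive factorization of $x$ already produced in the proof of Lemma~\ref{lem-s7.4-4}. Since the action of ${}^\mathbb{Z}\mathscr{P}$ on ${}^\mathbb{Z}\mathcal{O}_0$ decategorifies to the right regular action of $\mathbf{H}_n$, we have $[\theta_y L_x]=\underline{\hat{H}}_x\,\underline{H}_y$, and by \cite[Formula~(1)]{KiM} (applied with the two indices swapped) this class is non-zero if and only if $y^{-1}\leq_L x$. Since $y=(2,n-1)$ is an involution, this is just $y\leq_L x$, i.e.\ $x\geq_L y$ in the left preorder recalled in Subsection~\ref{s2.2}.

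The natural candidate for the required Hecke element is $w:=u_n s_2 s_{n-2}$. Indeed, in the proof of Lemma~\ref{lem-s7.4-4} it is already established that $x=wy$ with $\ell(x)=\ell(w)+\ell(y)=4+(2n-7)=2n-3$. Under this length-additive factorization one has $H_w H_y=H_x$ in the standard basis of $\mathbf{H}_n$. Expanding $\underline{H}_w=H_w+\sum_{a<w}p_{w,a}H_a$ and $\underline{H}_y=H_y+\sum_{b<y}p_{y,b}H_b$, every contribution to $\underline{H}_w\underline{H}_y$ in the standard basis other than the leading $H_w H_y$ comes from a factor $H_a$ with $\ell(a)<\ell(w)$ or $H_b$ with $\ell(b)<\ell(y)$, and hence produces only terms $H_c$ with $\ell(c)<\ell(x)$. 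Since $\underline{H}_z=H_z+\sum_{a<z}p_{z,a}H_a$ is unitriangular with respect to the standard basis, matching the coefficient of $H_x$ in both bases forces the coefficient of $\underline{H}_x$ in the Kazhdan--Lusztig expansion of $\underline{H}_w\underline{H}_y$ to be exactly $1$.

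By the very definition of the left preorder, the appearance of $\underline{H}_x$ in $\underline{H}_w\underline{H}_y$ gives $x\geq_L y$, and consequently $\theta_y L_x\neq 0$. The only non-routine ingredient is the length-additive factorization $x=(u_n s_2 s_{n-2})y$, which has already been verified during the proof of Lemma~\ref{lem-s7.4-4}; the coefficient-chasing in $\mathbf{H}_n$ that follows is entirely formal, and I expect no real obstacle. The conceptual takeaway is that the same decomposition that was used to compute $\mu(x,u_n)=1$ is precisely what is needed in order to deduce the $\leq_L$-relation between $y$ and $x$, so that Lemmas~\ref{lem-s7.4-4} and \ref{lem-s7.4-6} are, in a sense, two facets of the same identity $x=(u_n s_2 s_{n-2})y$.
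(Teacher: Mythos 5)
Your proof is correct, and it rests on exactly the same key input as the paper's, namely the length-additive factorization $x=(u_ns_{2}s_{n-2})y$ recorded in the proof of Lemma~\ref{lem-s7.4-4}; what differs is the level at which the argument is run. The paper stays categorical: since $x^{-1}=y\,(s_{2}s_{n-2}u_n)$ with lengths adding, $\theta_{x^{-1}}$ is a direct summand of $\theta_{s_{2}s_{n-2}u_n}\circ\theta_{y}$, and $\theta_{x^{-1}}L_x\neq 0$ by adjunction against $P_e$, so $\theta_{y}L_x$ cannot vanish. You instead decategorify: the unitriangularity argument shows that $\underline{H}_x$ occurs (with coefficient $1$) in $\underline{H}_{u_ns_{2}s_{n-2}}\underline{H}_{y}$, whence $y=y^{-1}\leq_L x$, and then \cite[Formula~(1)]{KiM} gives $\underline{\hat{H}}_x\underline{H}_{y}\neq 0$, i.e.\ $[\theta_{y}L_x]\neq 0$ and hence $\theta_{y}L_x\neq 0$ since graded composition multiplicities cannot cancel. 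Both routes are sound and are really two faces of the same observation, seen before and after passing to the Grothendieck group; yours makes explicit that the non-vanishing is equivalent to a cell-order relation, at the cost of the (routine) triangularity bookkeeping in $\mathbf{H}_n$, while the paper's version avoids any Hecke-algebra computation by exploiting $\theta_{w}L_{w^{-1}}\neq 0$. As a minor aside, your value $\ell(x)=2n-3$ is the correct one (it is what makes the degree count $\tfrac{\ell(x)-\ell(u_n)-1}{2}=n-5$ in the proof of Lemma~\ref{lem-s7.4-4} come out right); the value $2n-4$ printed there is a slip.
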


\begin{proof}
As already mentioned in the proof of Lemma~\ref{lem-s7.4-4},
we can decompose the element $x$ as $(u_ns_{2}s_{n-2})y$, where
$\ell(x)=\ell(u_ns_{2}s_{n-2})+\ell(y)$.

Note that, for any $w\in W$, we have $\theta_w L_{w^{-1}}\neq 0$
since, by adjunction,
\begin{displaymath}
\mathrm{Hom}(P_e, \theta_w L_{w^{-1}}) \cong
\mathrm{Hom}(\theta_{w^{-1}}P_e,  L_{w^{-1}})\cong
\mathrm{Hom}(P_{w^{-1}},  L_{w^{-1}})\neq 0.
\end{displaymath}
Also, for $a,b\in W$ such that $\ell(a)+\ell(b)=\ell(ab)$,
we know that $\theta_{ab}$ is a summand of $\theta_b\theta_a$.

Therefore $\theta_{y}L_x= 0$ would imply
$\theta_{s_{2}s_{n-2}u_n}\theta_y L_x=0$ which, in turn, 
would imply $\theta_{x^{-1}}L_x=0$, a contradiction.
\end{proof}

\begin{lemma}\label{lem-s7.4-7}
We have $\theta_{s_{1}y} L_{u_n}\cong 
\theta_{s_{n-1}y} L_{u_n}\cong \theta_{y}L_x\neq 0$.
\end{lemma}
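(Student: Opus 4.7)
The plan is to rewrite $\theta_{s_1 y} \cong \theta_y \theta_{s_1}$ as projective functors, compute $\theta_y \theta_{s_1} L_{u_n}$ via the Loewy structure of $\theta_{s_1} L_{u_n}$, and isolate $\theta_y L_x$ as the sole surviving contribution; the analogous isomorphism for $s_{n-1}$ will then follow from the Dynkin-diagram symmetry.

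First, since the support of $y = (2, n-1)$ is $\{s_2, \ldots, s_{n-2}\}$, the Bruhat subword property forces $s_1 z > z$ for every $z \leq y$. Substituting into \eqref{eq:kl-mu} collapses the Kazhdan--Lusztig product to $\underline{H}_{s_1} \underline{H}_y = \underline{H}_{s_1 y}$, and decategorifying via the isomorphisms of Section~\ref{s2.3} yields the functorial identity $\theta_y \theta_{s_1} \cong \theta_{s_1 y}$. Since $u_n s_1 < u_n$, the module $\theta_{s_1} L_{u_n}$ is a self-dual indecomposable with simple top and socle both equal to $L_{u_n}$ and with Jantzen middle $\mathbf{J}_{s_1} L_{u_n}$ whose composition factors, computed via \eqref{eq:kl-mu4}, are $L_{u_n s_1}$ together with the $L_z$ for $z > u_n$, $z s_1 > z$, $\mu(u_n, z) > 0$. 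By Lemma~\ref{lem-s7.4-4}, $L_x$ occurs as a composition factor with multiplicity one. Applying the exact functor $\theta_y$ and invoking Lemma~\ref{lem-s7.4-3} to annihilate the two $L_{u_n}$-layers gives
\begin{displaymath}
\theta_{s_1 y} L_{u_n} \cong \theta_y \theta_{s_1} L_{u_n} \cong \theta_y \mathbf{J}_{s_1} L_{u_n}.
\end{displaymath}

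The core step is to verify that every composition factor $L_z$ of $\mathbf{J}_{s_1} L_{u_n}$ with $z \neq x$ is killed by $\theta_y$. By \cite[Formula~(1)]{KiM}, $\theta_y L_z \neq 0$ is equivalent to $z \geq_L y$, which already forces the shape of the recording tableau of $z$ to be dominated by the shape $(n-2, 1, 1)$ of $Q_y$. A case analysis, leveraging the Bruhat-interval and $\mu$-function technology of \cite[Theorem~3.7]{BBDVW} (already exploited in the proof of Lemma~\ref{lem-s7.4-4}) together with the Robinson--Schensted description of the left Kazhdan--Lusztig order, rules out every candidate except $x$ itself. Since the remaining unkilled composition factors of $\mathbf{J}_{s_1} L_{u_n}$ are all isomorphic to $L_x$ with total multiplicity one, exactness of $\theta_y$ delivers $\theta_{s_1 y} L_{u_n} \cong \theta_y L_x$. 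The parallel isomorphism $\theta_{s_{n-1} y} L_{u_n} \cong \theta_y L_x$ follows by applying the Dynkin-diagram automorphism $s_i \leftrightarrow s_{n-i}$, which fixes each of $u_n$, $y$, and $x$ while swapping $s_1 \leftrightarrow s_{n-1}$. Finally, $\theta_y L_x \neq 0$ is Lemma~\ref{lem-s7.4-6}. The main obstacle is the case analysis isolating $x$ as the sole contributor in the Jantzen middle, where the specific combinatorics of $u_n = (1,3)(n-2,n)$, $y$, and $x$ conspire precisely to leave $x$ alone.
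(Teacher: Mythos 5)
Your overall skeleton is the same as the paper's: factor $\theta_{s_1y}\cong\theta_y\circ\theta_{s_1}$, use Lemma~\ref{lem-s7.4-3} to kill the top and socle $L_{u_n}$, use Lemma~\ref{lem-s7.4-4} for the multiplicity-one occurrence of $L_x$, use Lemma~\ref{lem-s7.4-6} for non-vanishing, and dispose of the $s_{n-1}$ case by the diagram automorphism (which indeed fixes $u_n$, $y$ and $x$). However, there is a genuine gap at the one step that carries all the content: you must show that $\theta_y$ annihilates \emph{every} composition factor $L_z$ of $\theta_{s_1}L_{u_n}$ other than $L_{u_n}$ and $L_x$, and you only assert that "a case analysis \ldots rules out every candidate except $x$ itself" without performing it. Worse, the tools you propose for that analysis are not adequate: deciding $\theta_yL_z\neq 0$ amounts to deciding $y\leq_L z$, and (as the paper notes in Subsection~\ref{s2.2}) there is no known combinatorial description of $\leq_L$; knowing that the shape of $z$ is dominated by $(n-2,1,1)$ excludes almost nothing; and \cite[Theorem~3.7]{BBDVW} computes individual Kazhdan--Lusztig polynomials, which does not by itself identify the subquotients of the Jantzen middle, let alone which of them survive $\theta_y$.

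The paper's actual argument for this step is different in kind and worth internalizing. It splits the candidates $L_z$ by KL \emph{right} cells. If $z$ lies in a strictly smaller right cell than $u_n$, then $\theta_yL_z=0$ is forced by a functorial argument: since $\theta_yL_{u_n}=0$, a nonzero $\theta_yL_z$ would contribute a direct summand of $\theta_y\theta_{s_1}L_{u_n}$ whose simple top lies in the right cell of $z$, contradicting the fact that every simple top of $\theta_y\theta_{s_1}L_{u_n}$ must lie in the right cell of $u_n$. If $z$ lies in the \emph{same} right cell as $u_n$, the paper pins $z$ down combinatorially: equality of insertion tableaux imposes the descent/prefix conditions listed there, and — this is the key extra input you are missing — the functors $\theta_{s_i}$ for $i=3,\dots,n-4$ commute with $\theta_{s_1}$ and kill $L_{u_n}$, hence kill every subquotient of $\theta_{s_1}L_{u_n}$; this forces $z(i)=i$ for $3<i<n-2$ and then a short check leaves only $z=u_n$ or $z=x$. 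Without either this two-case mechanism or a genuine substitute for it, your proof does not go through.
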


\begin{proof}
By symmetry, we just need to prove that 
$\theta_{s_{1}y} L_{u_n}\cong\theta_{y}L_x$.
From the above, we already know that 
$\theta_{s_{1}y}\cong \theta_{y}\theta_{s_{1}}$,
that $L_x$ appears with multiplicity $1$
in $\theta_{s_{1}}L_{u_n}$, that 
$\theta_{y}$ kills $L_{u_n}$ and that $\theta_y$
does not kill $L_x$. Therefore we just need
to argue that $\theta_{y}$ kills all simple subquotients of
$\theta_{s_{1}}L_{u_n}$ different from $L_x$.

Such a subquotient $L_z$ can either be from the same 
KL right cell as $u_n$ or from a strictly smaller
KL right cell. If $z$ is from a strictly smaller right cell,
then $\theta_{y}L_z=0$ for otherwise the module 
$\theta_{y}\theta_{s_{1}}L_{u_n}$ would have 
$\theta_{y}L_z$ as a non-trivial summand
(since $\theta_{y}L_{u_n}=0$). Any simple
top of $\theta_{y}L_z$  would be from the same 
KL right cell as $z$, which is not possible since any
simple top of $\theta_{y}\theta_{s_{1}}L_{u_n}$
must be from the same KL right cell as $u_n$.

It remains to consider the case when $z$ is from the same
KL right cell as $u_n$. In this case, the insertion 
tableau of $z$ and $u_n$ should be the same. Note that
the insertion tableau of $u_n$ is
\begin{equation}\label{eq-s3.3-3}
\begin{ytableau}
\scriptstyle{1}&
\scriptstyle{4}&
\scriptstyle{2}&
\scriptstyle{\dots}&
\resizebox{4mm}{!}{$\scriptstyle{n-2}$}\\
\scriptstyle{2}&
\resizebox{4mm}{!}{$\scriptstyle{n-1}$}\\
\scriptstyle{3}&
\scriptstyle{n}
\end{ytableau}
\end{equation}
This 
forces the following properties onto $z$:
\begin{itemize}
\item in the one-line notation, the elements
$3$, $2$ and $1$ in $z$ should be decreasing;
\item in the one-line notation, the elements
$n$, $n-1$ and $n-2$ in $z$ should be decreasing;
\item in any prefix of the  one-line notation,
the number of elements from the set $\{1,2,3\}$
should be greater than or equal to the number of 
elements from the set $\{n-2,n-1,n\}$.
\end{itemize}
One immediate consequence here is that $z(1)=3$ and 
$z(n)=n-2$.

Next we note that $\theta_{s_i}$ commutes with 
$\theta_{s_{1}}$, for $i=3,4,\dots,n-4$,
and also all such $\theta_{s_i}$ kill $L_{u_n}$
(as all such $s_i$ are not in the right descent of $u_n$).
Therefore all such $\theta_{s_i}$ should kill 
all simple subquotients of $\theta_{s_{1}}L_{u_n}$.
This means that, in the one-line notation for $z$, the 
elements in positions $3$, $4$,\dots, $n-2$ should be 
increasing. Together with the previous conditions,
this implies that $z(i)=i$, for all
$3<i<n-2$, moreover, that $z(3)\in\{1,2\}$
and $z(n-2)\in \{n-1,n\}$.

If $z(3)=1$, the above conditions force $z(2)=2$,
$z(n-2)=n$ and $z(n-1)=n-1$ and hence $z=u_n$,
which is not possible. Therefore $z(3)=2$ which forces
$z(2)=n$, $z(n-1)=1$ and $z(n-2)=n-1$. Hence $z=x$.
This completes the proof.
\end{proof}

Kostant negativity of $u_n$ follows directly from
Lemma~\ref{lem-s7.4-7} and \cite[Theorem~B]{KMM}.
This completes the proof of Theorem~\ref{thm-cusp2}.

\subsection{Third infinite family of examples}\label{s5.5}

Let $n\geq 6$ and $i\in\{3,4,\dots,n-3\}$. Denote by $\sigma_{n,i}$
the product of the two commuting transpositions $(1,i+1)$ and $(i,n)$.
Clearly, $\sigma_{n,i}$ is an involution. In one-line notation,
the element $\sigma_{n,i}$ is given by:
\begin{displaymath}
{\color{teal}(i+1)}23\dots (i-1){\color{teal}n1}(i+2)(i+3)\dots(n-1){\color{teal}i}, 
\end{displaymath}
where the displaced elements of $\{1,2,\dots,n\}$ are colored
in {\color{teal}teal}. The corresponding standard Young tableau is
\begin{displaymath}
 \begin{ytableau}
\scriptstyle{1}&
\scriptstyle{3}&
\dots&
\scriptstyle{i}&
\resizebox{4mm}{!}{$\scriptstyle{i+3}$}&
\dots&
\resizebox{4mm}{!}{$\scriptstyle{n-1}$}\\
\scriptstyle{2}&
\resizebox{4mm}{!}{$\scriptstyle{i+2}$}\\
\resizebox{4mm}{!}{$\scriptstyle{i+1}$}&
\scriptstyle{n}
\end{ytableau}
\end{displaymath}
provided that $i<n-3$ and 
\begin{displaymath}
 \begin{ytableau}
\scriptstyle{1}&
\scriptstyle{3}&
\dots&
\resizebox{4mm}{!}{$\scriptstyle{n-3}$}\\
\scriptstyle{2}&
\resizebox{4mm}{!}{$\scriptstyle{n-1}$}\\
\resizebox{4mm}{!}{$\scriptstyle{n-2}$}&
\scriptstyle{n}
\end{ytableau}
\end{displaymath}
if $i=n-3$.

\begin{theorem}\label{thm-cusp3}
For $n\geq 6$ and $i\in\{3,4,\dots,n-3\}$, the element
$\sigma_{n,i}\in S_n$ is Kostant cuspidal.
\end{theorem}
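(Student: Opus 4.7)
The strategy is to verify the two defining conditions of Kostant cuspidality separately: first, that $\sigma_{n,i}$ is Kostant negative; second, that every proper consecutive pattern of $\sigma_{n,i}$ is Kostant positive. By Theorem~\ref{thm-main}, the second condition reduces to checking only the two length-$(n-1)$ patterns obtained by deleting either the first or the last letter from the one-line notation of $\sigma_{n,i}$, since any shorter consecutive pattern is also a consecutive pattern of one of these two. Moreover, the outer automorphism of the Dynkin diagram sends $\sigma_{n,i}$ to $\sigma_{n,n-i}$ and preserves Kostant-positivity while interchanging the two extremal length-$(n-1)$ patterns, so one only needs to handle the pattern $p_L$ obtained by deleting the first letter.

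A direct RSK computation shows that $p_L=2,3,\dots,i-1,n-1,1,i+1,\dots,n-2,i\in S_{n-1}$ has recording tableau of shape $(n-4,2,1)$, with first row $1,2,\dots,i-1,i+2,\dots,n-2$, second row $i,i+1$, and third row $n-1$. The Duflo involution in this left cell is an interleaved product of two transpositions (for example, $(1,3)(2,6)$ when $n=7,i=3$, and $(2,4)(3,6)$ when $n=7,i=4$), and crucially not of the forbidden form $\sigma_{n-1,i'}$ with $i'\in\{3,\dots,n-4\}$. The plan is to prove Kostant positivity of this Duflo involution either by exhibiting a left-cell-equivalent element of the form $u w_0^J w_0$ and invoking \cite[Theorem~4.4]{GJ} together with \cite[Theorem~V]{MS}, in the spirit of Lemma~\ref{lem-s4.3-1}, or alternatively by locating a fully commutative representative in the same left cell and applying Theorem~\ref{thm-s7.2-1}.

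For Kostant negativity of $\sigma_{n,i}$ itself, we follow the two-isomorphic-images template of Theorems~\ref{thm-cusp1} and \ref{thm-cusp2}: we aim to exhibit distinct $a,b\in S_n$ with $\theta_a L_{\sigma_{n,i}}\cong \theta_b L_{\sigma_{n,i}}\neq 0$, whence $\mathbf{K}(\sigma_{n,i})=-$ by \cite[Theorem~B]{KMM}. Inspired by the two-commuting-transposition structure of $\sigma_{n,i}$, natural candidates are $a=s_1 z$ and $b=s_{n-1}z$ for a carefully chosen $z$; the transposition $z=(2,n-1)$ plays a role analogous to the element $y$ in the proof of Theorem~\ref{thm-cusp2}. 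The proof then proceeds in three subgoals: (1) show $\theta_z L_{\sigma_{n,i}}=0$ by a Loewy-length argument combining \cite[Theorem~1]{BW}, \cite[Subsection~3.3]{CMZ}, and the $\mathbf{a}$-function bound of \cite[Corollary~7]{Ma2}; (2) locate an $x\in S_n$ with $\mu(x,\sigma_{n,i})=1$ and $\theta_z L_x\neq 0$, computing $\mu$ via the Bruhat-interval-isomorphism criterion of \cite[Theorem~3.7]{BBDVW} applied to a suitable factorization of $x$ relative to $\sigma_{n,i}$; and (3) verify that $L_x$ is the only simple subquotient of $\theta_{s_1}L_{\sigma_{n,i}}$ in the right cell of $\sigma_{n,i}$ not annihilated by $\theta_z$, with the corresponding statement for $\theta_{s_{n-1}}L_{\sigma_{n,i}}$ following from the analogous argument on the opposite side.

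The main obstacle is subgoal (3): one must classify the permutations $y\sim_R \sigma_{n,i}$ compatible with the descent conditions imposed by the commutation of $\theta_z$ with those $\theta_{s_k}$ for $k$ outside the support of $z$, extracting all constraints from the insertion tableau of shape $(n-4,2,2)$. This is exactly the step at which the range $i\in\{3,\dots,n-3\}$ enters essentially: for $i=2$ or $i=n-2$ the tableau geometry loses the rigidity needed to pin down $x$ uniquely, which is consistent with the computational observation that $\sigma_{n,2}$ and $\sigma_{n,n-2}$ are Kostant positive. A secondary obstacle lies in producing a Kostant-positivity certificate for $p_L$ that is uniform in $i$; small ranks suggest that the Duflo involution in the left cell of $p_L$ always admits such a certificate, but the precise parabolic $J$ may depend on the parity and size of $i$, and the casework mirrors the boundary effect that forced $n\geq 7$ in Theorem~\ref{thm-cusp2}.
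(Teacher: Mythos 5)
Your reduction to the two extremal length-$(n-1)$ patterns and the use of the Dynkin-diagram symmetry are correct, and your RSK computation of the recording tableau of $p_L$ (shape $(n-4,2,1)$ in $S_{n-1}$) agrees with the paper's. But both halves of the argument stop at the point where the real work begins. For positivity of $p_L$, the branch ``locate a fully commutative representative in the same left cell and apply Theorem~\ref{thm-s7.2-1}'' is impossible: the left cell of $p_L$ has shape $(n-4,2,1)$ with three rows, and fully commutative elements live only in two-row cells, so no such representative exists. The branch via \cite[Theorem~4.4]{GJ} is closer to what actually works, but it is not a one-step certificate: the paper must first replace $p_L$ (equivalently $\alpha$) by a left-cell-equivalent $\beta=\gamma w_0^{(n-2)}w_0^{(n-1)}$, apply K{\aa}hrstr{\"o}m's theorem \cite[Theorem~1.1]{Ka} to descend to $\gamma\in S_{n-2}$, and only \emph{then} find a fully commutative involution $\delta$ (two nested cups) in the left cell of $\gamma$ to which Theorem~\ref{thm-s7.2-1} applies. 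You assert that such a certificate should exist but do not produce it.

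For negativity the gap is more serious. Your witnesses $s_1(2,n-1)$ and $s_{n-1}(2,n-1)$ are transplanted verbatim from Theorem~\ref{thm-cusp2} and, tellingly, do not depend on $i$; the elements that actually work are $x_{n,i}=s_is_1s_2\cdots s_{i+1}$ and $y_{n,i}=s_is_{n-1}s_{n-2}\cdots s_{i-1}$, which lie in a common cell of shape $(n-2,1,1)$ and have $\mathbf{a}$-value $3$. Your subgoal (1), namely $\theta_{(2,n-1)}L_{\sigma_{n,i}}=0$, is unsupported and doubtful: the Loewy-length argument of Lemma~\ref{lem-s7.4-3} relied on the fact that none of the interior simple reflections $s_3,\dots,s_{n-3}$ is a descent of $u_n$, whereas $s_i$ \emph{is} a descent of $\sigma_{n,i}$ and lies squarely in the support of $(2,n-1)$ for $i\in\{3,\dots,n-3\}$, so that mechanism breaks. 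More fundamentally, the paper does not prove the isomorphism by the $\mu$-coefficient/Jantzen-middle classification you outline in subgoals (2)--(3); it proves an inductive reduction (stripping $s_1$ or $s_{n-1}$ via the Serre-quotient equivalence of \cite[Theorem~37]{CMZ}) to the single base case $(n,i)=(6,3)$, and there the isomorphism $\theta_{x_{6,3}}L_{\sigma_{6,3}}\cong\theta_{y_{6,3}}L_{\sigma_{6,3}}$ is established by explicit GAP3 computations: equality of the two graded characters in the dual KL basis, indecomposability, simple top and socle, and the existence of a nonzero degree-zero map. Nothing in your outline substitutes for these verifications, so as it stands the proposal is a research plan rather than a proof.
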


\subsection{Proof of Theorem~\ref{thm-cusp3}}\label{s5.6}

We start by proving that the two consecutive patterns in 
$\sigma_{n,i}$ of length $n-1$ are Kostant positive. We start
with the pattern in which $\sigma_{n,i}(n)$ is removed.
Such removal gives the one-line sequence
\begin{displaymath}
(i+1)23\dots (i-1)n1(i+2)(i+3)\dots(n-1) 
\end{displaymath}
and it corresponds to the $S_{n-1}$-pattern
\begin{displaymath}
\alpha:=i23\dots (i-1)(n-1)1(i+1)(i+2)\dots(n-2) 
\end{displaymath}
(here, for $s>i$, each $s$ of the original sequence is replaced by $s-1$).
The recording tableau for the latter element equals
\begin{equation}\label{eq-21-nn}
\begin{ytableau}
\scriptstyle{1}&
\scriptstyle{3}&
\dots&
\scriptstyle{i}&
\resizebox{4mm}{!}{$\scriptstyle{i+3}$}&
\dots&
\resizebox{4mm}{!}{$\scriptstyle{n-1}$}\\
\scriptstyle{2}&
\resizebox{4mm}{!}{$\scriptstyle{i+2}$}\\
\resizebox{4mm}{!}{$\scriptstyle{i+1}$}
\end{ytableau}
\end{equation}
Now consider the following permutation (one-line notation):
\begin{displaymath}
\beta:=(n-1)1356\dots (i+1)24(i+2)(i+3)\dots(n-3)(n-2) 
\end{displaymath}
It is easy to check that the recording tableau of this latter
element is also given by \eqref{eq-21-nn}. Therefore
$\alpha$ and $\beta$ belong to the same KL left cell
and hence $\mathbf{K}(\alpha)=\mathbf{K}(\beta)$.

The permutation $\beta\in S_{n-1}$ starts with $n-1$ and hence
its has the form $\gamma w_0^{(n-2)}w_0^{(n-1)}$, where  
$w_0^{(n-2)}$ and $w_0^{(n-1)}$ are the longest elements 
in $S_{n-2}$ and $S_{n-1}$, respectively,
and $\gamma\in S_{n-2}$ is the element
\begin{displaymath}
\gamma:= 1356\dots (i+1)24(i+2)(i+3)\dots(n-3)(n-2)
\end{displaymath}
By \cite[Theorem~1.1]{Ka}, we have $\mathbf{K}(\beta)=\mathbf{K}(\gamma)$.

The recording tableau of $\gamma$ equals:
\begin{displaymath}
\begin{ytableau}
\scriptstyle{1}&
\scriptstyle{2}&
\dots&
\resizebox{4mm}{!}{$\scriptstyle{i-1}$}&
\resizebox{4mm}{!}{$\scriptstyle{i+2}$}&
\resizebox{4mm}{!}{$\scriptstyle{i+3}$}&
\dots&
\resizebox{4mm}{!}{$\scriptstyle{n-2}$}
\\
\scriptstyle{i}&
\resizebox{4mm}{!}{$\scriptstyle{i+1}$}
\end{ytableau}
\end{displaymath}
The involution in $S_{n-4}$ with the same recording tableau
is $\delta=(i-2,i)(i-1,i+1)$ (given as the product of
two transpositions).
We have $\mathbf{K}(\delta)=\mathbf{K}(\gamma)$ since
$\delta$ and $\gamma$ belong to the same KL left cell.
Note that the insertion tableau of $\delta$ has two rows.
Hence $\delta$ is fully commutative. The Temperley-Lieb diagram
of $\delta$ is of the following form: 
\begin{displaymath}
\xymatrix{
\bullet\ar@{-}[d]& \dots & \bullet\ar@{-}[d]
&\bullet\ar@/_8pt/@{-}[rrr]&\bullet\ar@/_4pt/@{-}[r]
&\bullet&\bullet&\bullet\ar@{-}[d]&
\dots & \bullet\ar@{-}[d]\\
\bullet& \dots & \bullet&\bullet\ar@/^8pt/@{-}[rrr]
&\bullet\ar@/^4pt/@{-}[r]&\bullet&\bullet&\bullet&
\dots & \bullet
}
\end{displaymath}
In particular, we see that the two cups in this diagram are nested. Therefore,
by \cite[Theorem~5.1]{MMM}, the element $\delta$ is Kostant positive.
From the above, we have $\mathbf{K}(\alpha)=\mathbf{K}(\delta)$, which means 
that $\alpha$ is Kostant positive.

We note that $\alpha$ has shape $(n-3,2,1)$. The general 
answer to Kostant's problem for elements of such shape
is in the process of being worked out in \cite{CM4}.

Next we go to the pattern in which $\sigma_{n,i}(1)$ is removed.
Note that the collection of all elements of the form
$\sigma_{n,i}$, for a fixed $n$ but all valid $i$, is invariant
under the symmetry of the root system (it swaps
$\sigma_{n,3}$ with $\sigma_{n,n-3}$, then 
$\sigma_{n,4}$ with $\sigma_{n,n-4}$ and so on).
Under this symmetry, the pattern which we get by removing 
$\sigma_{n,i}(1)$ from $\sigma_{n,3}$ corresponds to the 
pattern which we get by removing 
$\sigma_{n,i}(n)$ from $\sigma_{n,n-3}$ and so on.
Therefore, the removal of $\sigma_{n,i}(1)$, for all $i$,
corresponds to the removal of $\sigma_{n,i}(n)$, for all $i$.
The latter case is already treated above. Consequently, the
$n-1$-pattern obtained from $\sigma_{n,i}$ by removing $\sigma_{n,i}(1)$ 
is Kostant positive.

It remains to show that $\sigma_{n,i}$ itself is Kostant negative.
Define $x_{n,i},y_{n,i}\in S_n$ by their reduced expressions as follows:
\begin{displaymath}
x_{n,i}:=s_is_1s_2s_3\dots s_{i+1},\quad
y_{n,i}:=s_is_{n-1}s_{n-2}\dots s_{i-1}.
\end{displaymath}
In one-line notation, we have the following $x_{n,i}$
(with displaced elements of $\{1,2,\dots,n\}$ colored in {\color{teal}teal}):
\begin{displaymath}
{\color{teal}23\dots(i-1)(i+1)}i{\color{teal}(i+2)1}(i+3)(i+4)\dots n
\end{displaymath}
and the following $y_{n,i}$ (with displaced elements of $\{1,2,\dots,n\}$ 
colored in {\color{teal}teal}):
\begin{displaymath}
12\dots(i-2){\color{teal}n(i-1)}(i+1){\color{teal}i(i+2)(i+3)\dots(n-1)}.
\end{displaymath}
Note that both elements have the following insertion tableau and
hence belong to the same KL left cell:
\begin{displaymath}
\begin{ytableau}
\scriptstyle{1}&
\scriptstyle{2}&
\dots&
\resizebox{4mm}{!}{$\scriptstyle{i-1}$}&
\resizebox{4mm}{!}{$\scriptstyle{i+1}$}&
\resizebox{4mm}{!}{$\scriptstyle{i+3}$}&
\dots&
\scriptstyle{n}\\
\scriptstyle{i}\\
\resizebox{4mm}{!}{$\scriptstyle{i+2}$}
\end{ytableau}
\end{displaymath}
Consequently, we also have $\mathbf{a}(x_{n,i})=\mathbf{a}(y_{n,i})=3$.
Clearly, $x_{n,i}\neq y_{n,i}$. Our aim is to show that 
\begin{equation}\label{eq-case3-1}
\theta_{x_{n,i}} L_{\sigma_{n,i}}\cong \theta_{y_{n,i}} L_{\sigma_{n,i}}\neq 0, 
\end{equation}
which would imply that $\sigma_{n,i}$ is Kostant negative
by \cite[Theorem~B]{KMM}.

\begin{lemma}\label{s5.6-lem1}
We have:
\begin{enumerate}[$($a$)$]
\item\label{s5.6-lem1-1}
$\underline{H}_{s_1s_2\dots s_{i+1}}=
\underline{H}_{s_1}\underline{H}_{s_2}\dots
\underline{H}_{s_{i+1}}$,
\item\label{s5.6-lem1-2}
$\underline{H}_{x_{n,i}}+\underline{H}_{s_1s_2\dots s_{i-2}s_{i}s_{i+1}}=
\underline{H}_{s_i}\underline{H}_{s_1}\underline{H}_{s_2}\dots
\underline{H}_{s_{i+1}}$,
\item\label{s5.6-lem1-3}
$\underline{H}_{s_{n-1}s_{n-2}\dots s_{i-1}}=
\underline{H}_{s_{n-1}}\underline{H}_{s_{n-2}}\dots
\underline{H}_{s_{i-1}}$,
\item\label{s5.6-lem1-4}
$\underline{H}_{y_{n,i}}+\underline{H}_{s_{n-1}s_{n-2}\dots s_{i-2}s_{i}s_{i-1}}=
\underline{H}_{s_{n-1}}\underline{H}_{s_{n-2}}\dots
\underline{H}_{s_{i-1}}$.
\end{enumerate}
\end{lemma}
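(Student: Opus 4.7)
The plan is to derive all four identities from the Kazhdan-Lusztig multiplication formulas \eqref{eq:kl-mu} and \eqref{eq:kl-mu2}, exploiting the structural fact that the two elements
\[
w := s_1 s_2 \cdots s_{i+1} \quad\text{and}\quad w' := s_{n-1} s_{n-2} \cdots s_{i-1}
\]
are both \emph{boolean}: each admits a unique reduced expression in which every simple reflection occurs at most once. As a consequence, the Bruhat interval below $w$ (respectively $w'$) is a Boolean lattice consisting of the subwords of the given expression, all Kazhdan-Lusztig polynomials $p_{w,x}$ (respectively $p_{w',x}$) equal $1$, and $\mu(w,x) = 1$ precisely when $x$ is covered by $w$ in the Bruhat order (and $0$ otherwise).

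For identities (a) and (c), the plan is to proceed by induction on the number of factors. In the inductive step for (a), apply \eqref{eq:kl-mu2} to $\underline{H}_{s_1\cdots s_i}\underline{H}_{s_{i+1}}$; every $x\leq s_1\cdots s_i$ lies in the parabolic subgroup $\langle s_1,\dots,s_i\rangle$, which does not contain $s_{i+1}$, so no such $x$ can have $s_{i+1}$ in its right descent and the mu-sum in \eqref{eq:kl-mu2} is empty. The same argument, with indices reversed, yields (c). (In fact, a one-line alternative is to invoke Lemma~\ref{lem-disjointsupports} repeatedly, but the inductive route is cleaner here.)

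For (b), the plan is to rewrite the right-hand side using (a) as $\underline{H}_{s_i}\underline{H}_w$ and then apply \eqref{eq:kl-mu}. A quick check using the cycle form of $w$ (which represents the $(i+2)$-cycle $(1,2,\ldots,i+2)$) shows that $s_iw>w$, so \eqref{eq:kl-mu} expresses the product as $\underline{H}_{x_{n,i}}$ plus a mu-sum over codimension-one subwords $x_j := s_1\cdots\widehat{s_j}\cdots s_{i+1}$ of $w$, each entering with coefficient $\mu(w,x_j)=1$ by the boolean property. The task then reduces to isolating those $j$ for which $s_i$ is a left descent of $x_j$. Each such $x_j$ decomposes as a product of two commuting cycles on $\{1,\ldots,j\}$ and $\{j+1,\ldots,i+2\}$, so (up to obvious boundary degeneracies) its left descent set equals $\{s_1,s_{j+1}\}$, and $s_i$ appears there precisely when $j=i-1$. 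The single surviving term is then $s_1\cdots s_{i-2}s_is_{i+1}$, yielding (b). Identity (d) follows by the mirror-image argument, starting from (c) and pinning down a unique contributing codimension-one subword of $w'$ via the symmetric descent analysis.

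I expect the only mildly technical step to be the one-line-notation bookkeeping in (b) and (d): checking the left-descent condition for each of the codimension-one subwords of $w$ and $w'$. Since each such subword is already known to be a product of two commuting cycles sitting in disjoint parabolics, their left-descent sets add disjointly, and the analysis reduces to a one-line inequality in each case. Nothing beyond \eqref{eq:kl-mu}, the boolean property, and this descent computation is used, so the main obstacle is purely ensuring that the unique contributing index matches the element named in the statement.
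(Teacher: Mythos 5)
Your argument is correct. For parts (a) and (c) it coincides in substance with the paper's: the paper invokes Lemma~\ref{lem-disjointsupports} (or \cite[Theorem~1]{BW}) iteratively, which amounts to exactly your observation that the $\mu$-sum in \eqref{eq:kl-mu2} is empty at each step because no element of the relevant parabolic subgroup has the new letter in its descent set. For parts (b) and (d), however, you take a genuinely different route. The paper never analyses the $\mu$-function of the long factor $s_1\cdots s_{i+1}$ at all: it commutes $\underline{H}_{s_i}$ into the middle of the product of the $\underline{H}_{s_j}$, reduces everything to the dihedral identity $\underline{H}_{s_i}\underline{H}_{s_{i-1}}\underline{H}_{s_i}=\underline{H}_{s_is_{i-1}s_i}+\underline{H}_{s_i}$, and then reassembles the two resulting terms using Lemma~\ref{lem-disjointsupports}. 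You instead apply \eqref{eq:kl-mu} globally to $\underline{H}_{s_i}\underline{H}_{s_1s_2\cdots s_{i+1}}$ and control the correction terms via the booleanness of $s_1\cdots s_{i+1}$ (unique reduced word with distinct letters, Boolean lower Bruhat interval, all Kazhdan--Lusztig polynomials trivial, hence $\mu\neq 0$ only in codimension one), followed by a left-descent analysis of the codimension-one subwords that isolates $j=i-1$. Both work; the paper's version needs nothing beyond rank-two computations and the disjoint-support lemma, whereas yours imports the standard (but not proved in the paper) fact that boolean elements have trivial KL polynomials, in exchange for producing the full KL-basis expansion of $\underline{H}_{s_i}\underline{H}_w$ in a single step and making transparent why exactly one extra term survives. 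One remark: your mirror argument for (d) yields $\underline{H}_{s_{n-1}\cdots s_{i+2}s_is_{i-1}}$ as the extra term and $\underline{H}_{s_i}\underline{H}_{s_{n-1}}\cdots\underline{H}_{s_{i-1}}$ as the right-hand side; this is the intended claim (the printed ``$s_{i-2}$'' and the missing leading factor $\underline{H}_{s_i}$ in item \eqref{s5.6-lem1-4} are typos), so no gap arises there.
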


\begin{proof}
Claims \eqref{s5.6-lem1-1} and
\eqref{s5.6-lem1-3} follow from 
Lemma~\ref{lem-disjointsupports} 
(alternatively, from \cite[Theorem~1]{BW}.

Let us prove Claim~\eqref{s5.6-lem1-2}, Claim~\eqref{s5.6-lem1-4}
is proved similarly. 
Using commutativity, we have
\begin{displaymath}
\underline{H}_{s_i}\underline{H}_{s_1}
\underline{H}_{s_2}\dots \underline{H}_{s_{i+1}}=
\underline{H}_{s_1}\dots \underline{H}_{s_{i-2}}
\underline{H}_{s_i}\underline{H}_{s_{i-1}}
\underline{H}_{s_i}\underline{H}_{s_{i+1}}.
\end{displaymath}
Next we note that 
\begin{displaymath}
\underline{H}_{s_i}\underline{H}_{s_{i-1}}
\underline{H}_{s_i}=\underline{H}_{s_i}
+\underline{H}_{s_is_{i-1}s_i}.
\end{displaymath}
Finally,
\begin{displaymath}
\underline{H}_{s_1}\underline{H}_{s_2}\dots
\underline{H}_{s_{i-2}}\underline{H}_{s_i}=
\underline{H}_{s_1s_2\dots s_{i-2}s_{i}}
\end{displaymath}
as well as 
\begin{displaymath}
\underline{H}_{s_1}\underline{H}_{s_2}\dots
\underline{H}_{s_{i-2}}\underline{H}_{s_is_{i-1}s_i}
=\underline{H}_{x_{n,i}}
\end{displaymath}
by Lemma~\ref{lem-disjointsupports}.
This implies Claim~\eqref{s5.6-lem1-2}.
\end{proof}

\begin{lemma}\label{s5.6-lem2}
We have both $\theta_{x_{n,i}} L_{\sigma_{n,i}}\neq 0$ and 
$\theta_{y_{n,i}} L_{\sigma_{n,i}}\neq 0$.
\end{lemma}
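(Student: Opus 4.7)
The plan is to translate the claimed non-vanishing statements into assertions about the Kazhdan--Lusztig right preorder. By \cite[Formula~(1)]{KiM}, the condition $\theta_w L_{\sigma_{n,i}}\neq 0$ is equivalent to $\underline{\hat{H}}_{\sigma_{n,i}}\underline{H}_w\neq 0$, which in turn is equivalent to $w^{-1}\leq_L \sigma_{n,i}$. Since $\sigma_{n,i}$ is an involution (so $\sigma_{n,i}^{-1}=\sigma_{n,i}$), this reduces to $w\leq_R \sigma_{n,i}$. The lemma therefore follows from establishing $x_{n,i}\leq_R \sigma_{n,i}$ and $y_{n,i}\leq_R \sigma_{n,i}$.

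For each $w\in\{x_{n,i},y_{n,i}\}$, I would produce an explicit witness $\tilde{w}$ in the right KL cell of $w$ (that is, with $P_{\tilde{w}}=P_w$) which additionally appears as a length-additive left factor of $\sigma_{n,i}$. From such a factorization $\sigma_{n,i}=\tilde{w}\cdot z$ with $\ell(\sigma_{n,i})=\ell(\tilde{w})+\ell(z)$, the element $\underline{H}_{\sigma_{n,i}}$ arises as the leading term of $\underline{H}_{\tilde{w}}\underline{H}_z$, witnessing $\sigma_{n,i}\geq_R\tilde{w}\sim_R w$ and hence $\sigma_{n,i}\geq_R w$.

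A key combinatorial input is that $\ell(\sigma_{n,i})=2n-4$, two less than the naive sum $\ell((1,i+1))+\ell((i,n))=2n-2$ of the lengths of the two commuting transpositions making up $\sigma_{n,i}$: the reduced expressions of $(1,i+1)$ and $(i,n)$ share the simple reflection $s_i$, and a braid cancellation takes place in their product. Performing this cancellation explicitly produces several genuine reduced expressions of $\sigma_{n,i}$, and candidate witnesses $\tilde{x}$, $\tilde{y}$ are to be drawn from their length-$\ell(x_{n,i})$ and length-$\ell(y_{n,i})$ prefixes, respectively, with the requirement $P_{\tilde{x}}=P_{x_{n,i}}$ (resp.\ $P_{\tilde{y}}=P_{y_{n,i}}$) verified by direct RSK computation.

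The main obstacle is that the most natural candidate prefixes (for instance $s_is_{i+1}\cdots s_{n-1}s_{n-2}$ as a proposed stand-in for $y_{n,i}$) typically land in a right cell different from that of $y_{n,i}$, so an explicit, somewhat delicate combinatorial choice of reduced expression is required to match the insertion tableau. If the direct prefix approach becomes too cumbersome for general $(n,i)$, a cleaner alternative route is offered by Lemma~\ref{s5.6-lem1}: one can expand $\underline{\hat{H}}_{\sigma_{n,i}}\underline{H}_{x_{n,i}}$ via part~\eqref{s5.6-lem1-2} (and $\underline{\hat{H}}_{\sigma_{n,i}}\underline{H}_{y_{n,i}}$ via part~\eqref{s5.6-lem1-4}), compute the resulting products by iteratively applying~\eqref{eq:kl-mu3} and~\eqref{eq:kl-mu4}, starting from $\underline{\hat{H}}_{\sigma_{n,i}}\underline{H}_{s_i}\neq 0$ (valid since $s_i$ is a right descent of $\sigma_{n,i}$), and verify that the cumulative expansion does not vanish, using the residual correction terms $\underline{\hat{H}}_{\sigma_{n,i} s_i},\underline{\hat{H}}_{\sigma_{n,i} s_i s_1},\ldots$ to track a surviving non-zero summand.
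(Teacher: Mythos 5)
Your opening reduction is correct and is the same one the paper uses implicitly: by \cite[Formula~(1)]{KiM}, the claim $\theta_{x_{n,i}}L_{\sigma_{n,i}}\neq 0$ is equivalent to $\underline{\hat{H}}_{\sigma_{n,i}}\underline{H}_{x_{n,i}}\neq 0$, i.e.\ to $x_{n,i}\leq_R\sigma_{n,i}$ (using that $\sigma_{n,i}$ is an involution). The problem is that neither of your two routes actually establishes this. For the primary route you never exhibit, for a single pair $(n,i)$, a weak-order prefix $\tilde{x}$ of $\sigma_{n,i}$ lying in the right cell of $x_{n,i}$; you yourself flag that the natural candidates fail, and indeed for $n=6$, $i=3$ the evident length-five prefixes of reduced words for $\sigma_{6,3}=426153$ (e.g.\ $421365$ or $241635$) have insertion tableaux of shapes $(3,2,1)$ and $(3,3)$ rather than the required $(4,1,1)$, while $x_{6,3}$ itself is not a weak prefix of $\sigma_{6,3}$ (as $\ell(x_{6,3}^{-1}\sigma_{6,3})=7\neq 3$). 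So the existence of a suitable witness is exactly the open content of the lemma in this approach, and it is not supplied.

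The fallback route is the paper's actual strategy, but your description omits its decisive step. Expanding via Lemma~\ref{s5.6-lem1}\eqref{s5.6-lem1-2} gives
\begin{displaymath}
\underline{\hat{H}}_{\sigma_{n,i}}\underline{H}_{x_{n,i}}
= A-\underline{\hat{H}}_{\sigma_{n,i}}\underline{H}_{s_1s_2\dots s_{i-2}s_{i}s_{i+1}},
\qquad
A:=\underline{\hat{H}}_{\sigma_{n,i}}\underline{H}_{s_i}\underline{H}_{s_1}\cdots\underline{H}_{s_{i+1}},
\end{displaymath}
and proving $A\neq 0$ (which is the easy part, via $s_1\sim_R s_1s_2\cdots s_{i+1}$ and the right descent $s_i$ of $\sigma_{n,i}$) is not enough: since all coefficients are non-negative, a priori the companion summand could account for all of $A$. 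The paper resolves this by a degree-separation argument: it shows that $\underline{\hat{H}}_{\sigma_{n,i}}\underline{H}_{s_1}\cdots\underline{H}_{s_{i-2}}\underline{H}_{s_i}\underline{H}_{s_{i+1}}$ has no components in degrees $\pm 3$ (using the inductive identification $\underline{\hat{H}}_{\sigma_{n,i}}\underline{H}_{s_1}\cdots\underline{H}_{s_{i-2}}=\underline{\hat{H}}_{\sigma_{n,i}s_1\cdots s_{i-3}}\underline{H}_{s_{i-2}}$), whereas $A$ does have a non-zero component in degree $3$ (tracked through $\underline{H}_{s_{i-2}}\underline{H}_{s_{i-1}}\underline{H}_{s_i}\underline{H}_{s_{i+1}}$ and an $\mathbf{a}$-value argument). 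Your phrase ``track a surviving non-zero summand'' gestures at this but does not identify the mechanism, and without it the argument does not close.
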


\begin{proof}
We prove that $\theta_{x_{n,i}} L_{\sigma_{n,i}}\neq 0$
(or, equivalently, 
$\underline{\hat{H}}_{\sigma_{n,i}}\underline{H}_{x_{n,i}}\neq 0$), 
the second claim follows from the symmetry of the root system.

Taking Lemma~\ref{s5.6-lem1} into account, our strategy to 
prove Lemma~\ref{s5.6-lem2} will be as follows:
\begin{itemize}
\item we first observe that 
$A:=\underline{\hat{H}}_{\sigma_{n,i}}\underline{H}_{s_i}\underline{H}_{s_1}\underline{H}_{s_2}\dots
\underline{H}_{s_{i+1}}$ is non-zero;
\item then we show that 
$\underline{\hat{H}}_{\sigma_{n,i}}\underline{H}_{s_1}
\underline{H}_{s_2}\dots
\underline{H}_{s_{i-2}}\underline{H}_{s_i}\underline{H}_{s_{i+1}}$
does not have any non-zero components
(with respect to the dual KL basis) in degrees $\pm 3$;
\item and, finally, 
we show that $A$ has non-zero components
(with respect to the dual KL basis) in degree $3$.
\end{itemize}

We start by noting that 
$\underline{\hat{H}}_{\sigma_{n,i}}\underline{H}_{s_i}$ is non-zero
since $s_i$ is in the right descent of $\sigma_{n,i}$ and,
consequently, $\underline{\hat{H}}_{\sigma_{n,i}}\underline{H}_{s_i}$
has $\underline{\hat{H}}_{\sigma_{n,i}}$ in degrees $\pm 1$
by \eqref{eq:kl-mu4}. Similarly, 
$\underline{\hat{H}}_{\sigma_{n,i}}\underline{H}_{s_1}$ is non-zero.
Since $s_1\sim_R s_1s_2\dots s_{i+1}$, it follows that 
$\underline{\hat{H}}_{\sigma_{n,i}}\underline{H}_{s_1}
\underline{H}_{s_2}\dots \underline{H}_{s_{i+1}}$
is non-zero. This implies that $A\neq 0$ 
and our first step is complete.

Let us take a closer look at the elements  
$\underline{\hat{H}}_{\sigma_{n,i}}\underline{H}_{s_1}
\underline{H}_{s_2}\dots \underline{H}_{s_r}$, where
$1\leq r\leq i-2$. Note that the corresponding modules
in $\mathcal{O}_0$ are indecomposable
by \cite[Proposition~2]{CMZ} since 
$\theta_{s_1}L_{\sigma_{n,i}}$ is indecomposable. 
The module $\theta_{s_1}L_{\sigma_{n,i}}$ has simple
top and socle isomorphic to $L_{\sigma_{n,i}}$. Both are killed
by $\theta_{s_2}$ as $\sigma_{n,i}s_2>\sigma_{n,i}$
(assuming $2\leq i-2$). Therefore the Jantzen middle of
$\theta_{s_1}L_{\sigma_{n,i}}$ contains a unique simple
which is not killed by $\theta_{s_2}$, namely 
$L_{\sigma_{n,i}s_1}$. This means that 
$\underline{\hat{H}}_{\sigma_{n,i}}\underline{H}_{s_1}
\underline{H}_{s_2}$ equals 
$\underline{\hat{H}}_{\sigma_{n,i}s_1}\underline{H}_{s_2}$.
Proceeding inductively, we obtain that
\begin{displaymath}
\underline{\hat{H}}_{\sigma_{n,i}}\underline{H}_{s_1}
\underline{H}_{s_2}\dots \underline{H}_{s_{i-2}}
=\underline{\hat{H}}_{\sigma_{n,i}s_1s_2\dots s_{i-3}}
\underline{H}_{s_{i-2}}
\end{displaymath}
and the latter has a copy of 
$\underline{\hat{H}}_{\sigma_{n,i}s_1s_2\dots s_{i-3}}$
in degrees $\pm 1$ and is zero (when written with respect to the
dual KL basis) in all degrees outside $\{-1,0,1\}$.
Note also that 
$\underline{\hat{H}}_{\sigma_{n,i}s_1s_2\dots s_{i-3}}
\underline{H}_{s_{i+2}}=0$ as
$\sigma_{n,i}s_1s_2\dots s_{i-3}s_{i+2}>\sigma_{n,i}s_1s_2\dots s_{i-3}$.
This implies that the element 
$\underline{\hat{H}}_{\sigma_{n,i}}\underline{H}_{s_1}
\underline{H}_{s_2}\dots
\underline{H}_{s_{i-2}}\underline{H}_{s_i}\underline{H}_{s_{i+1}}$
does not have any non-zero components
(with respect to the dual KL basis) in degrees $\pm 3$,
which gives us our second step.

For the last step, we note that 
$\underline{\hat{H}}_{\sigma_{n,i}s_1s_2\dots s_{i-3}}\underline{H}_{s_{i-1}}=0$
since
\begin{displaymath}
\sigma_{n,i}s_1s_2\dots s_{i-3}s_{i-1}>\sigma_{n,i}s_1s_2\dots s_{i-3} 
\end{displaymath}
and hence 
\begin{displaymath}
\underline{\hat{H}}_{\sigma_{n,i}s_1s_2\dots s_{i-3}}
\underline{H}_{s_{i-2}}\underline{H}_{s_{i-1}}=
\underline{\hat{H}}_{\sigma_{n,i}s_1s_2\dots s_{i-3}s_{i-2}}\underline{H}_{s_{i-1}}
\end{displaymath}
which has a copy of $\underline{\hat{H}}_{\sigma_{n,i}s_1s_2\dots s_{i-3}s_{i-2}}$
in degree $1$. But now we have
\begin{displaymath}
\sigma_{n,i}s_1s_2\dots s_{i-3}s_{i-2}s_{i}<
\sigma_{n,i}s_1s_2\dots s_{i-3}s_{i-2} 
\end{displaymath}
and hence
$\underline{\hat{H}}_{\sigma_{n,i}s_1s_2\dots s_{i-3}s_{i-2}}
\underline{H}_{s_{i-1}}\underline{H}_{s_{i}}$ has 
$\underline{\hat{H}}_{\sigma_{n,i}s_1s_2\dots s_{i-3}s_{i-2}}\underline{H}_{s_{i}}$
as a summand centered in degree $1$. Note that 
$\underline{\hat{H}}_{\sigma_{n,i}s_1s_2\dots s_{i-3}s_{i-2}}\underline{H}_{s_{i}}\neq 0$ implies 
\begin{displaymath}
\underline{\hat{H}}_{\sigma_{n,i}s_1s_2\dots s_{i-3}s_{i-2}}\underline{H}_{s_{i}}\underline{H}_{s_{i+1}}\neq 0 
\end{displaymath}
as $\underline{H}_{s_{i}}\underline{H}_{s_{i+1}}=
\underline{H}_{s_{i}s_{i+1}}$ and $s_{i}\sim_R s_{i}s_{i+1}$.
Moreover, as the $\mathbf{a}$-value of $s_{i}s_{i+1}$ is $1$,
the element $\underline{\hat{H}}_{\sigma_{n,i}s_1s_2\dots s_{i-3}s_{i-2}}\underline{H}_{s_{i}}\underline{H}_{s_{i+1}}$ must be non-zero in degrees
$\{-1,0,1\}$. This implies that 
$\underline{\hat{H}}_{\sigma_{n,i}s_1s_2\dots s_{i-3}s_{i-2}}
\underline{H}_{s_{i-1}}\underline{H}_{s_{i}}\underline{H}_{s_{i+1}}$
must be non-zero in degree $2$. Consequently, 
$A$ must be non-zero in degree $3$. This completes the proof.
\end{proof}

\begin{lemma}\label{s5.6-lem3}
Both $\theta_{x_{n,i}} L_{\sigma_{n,i}}$ and 
$\theta_{y_{n,i}} L_{\sigma_{n,i}}$ are indecomposable modules.
\end{lemma}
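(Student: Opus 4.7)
By the outer symmetry of the root system of type $A_{n-1}$ (sending $s_{j}\mapsto s_{n-j}$, which maps the pair $(x_{n,i},\sigma_{n,i})$ to $(y_{n,n-i},\sigma_{n,n-i})$), it is enough to prove that $\theta_{x_{n,i}}L_{\sigma_{n,i}}$ is indecomposable; the statement for $\theta_{y_{n,i}}L_{\sigma_{n,i}}$ follows by applying the symmetry to the former result with $i$ replaced by $n-i$.

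Combining parts \eqref{s5.6-lem1-1} and \eqref{s5.6-lem1-2} of Lemma~\ref{s5.6-lem1} yields, at the level of projective functors, the isomorphism
\begin{displaymath}
\theta_{x_{n,i}}\oplus \theta_{s_1s_2\cdots s_{i-2}s_is_{i+1}}
\;\cong\;
\theta_{s_{i+1}}\circ\theta_{s_i}\circ\cdots\circ\theta_{s_2}\circ\theta_{s_1}\circ\theta_{s_i}.
\end{displaymath}
Applying both sides to $L_{\sigma_{n,i}}$ expresses $\theta_{x_{n,i}}L_{\sigma_{n,i}}$ as a direct summand of the iterated composition on the right. The strategy is to show that this right-hand module is itself indecomposable; combined with the non-vanishing $\theta_{x_{n,i}}L_{\sigma_{n,i}}\neq 0$ already established in Lemma~\ref{s5.6-lem2}, this would force $\theta_{s_1s_2\cdots s_{i-2}s_is_{i+1}}L_{\sigma_{n,i}}=0$ and $\theta_{x_{n,i}}L_{\sigma_{n,i}}$ to be the unique non-zero (and indecomposable) summand.

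To establish indecomposability of the iterated module, I would build it up one simple-reflection functor at a time, starting with $\theta_{s_i}L_{\sigma_{n,i}}$. Since $s_i$ lies in the right descent set of $\sigma_{n,i}$, this starting module is non-zero and has simple top (and simple socle) equal to $L_{\sigma_{n,i}}$, hence is indecomposable. The functors $\theta_{s_1},\theta_{s_2},\ldots,\theta_{s_{i+1}}$ are then applied in that order, and at each stage \cite[Proposition~2]{CMZ} (used in the same style as in the proof of Lemma~\ref{lem-s4.3-3} and of Lemma~\ref{lem-s7.4-3}) preserves the number of indecomposable summands, provided the intermediate module remains non-zero. Non-vanishing of each intermediate module can be checked via \cite[Formula~(1)]{KiM} by verifying that the inverse of the corresponding KL left index lies below $\sigma_{n,i}$ in the Kazhdan-Lusztig left preorder, a condition that one reads off from the right-descent information and the one-line form of $\sigma_{n,i}$.

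The main obstacle is verifying, step by step, the applicability of \cite[Proposition~2]{CMZ} along the entire chain: the proposition demands not only that each intermediate module be non-zero, but that applying $\theta_{s_j}$ does not split off any additional summand. This will require some careful bookkeeping with the Kazhdan-Lusztig products $\underline{H}_{s_j}\underline{H}_v$ for $v$ obtained successively from $s_i$ by right-multiplications, together with a check that no simple constituent appearing in the Jantzen middle of the relevant intermediate modules is killed by the next $\theta_{s_j}$ in a way that would break the summand count. Once these combinatorial verifications are completed, the conclusion follows at once.
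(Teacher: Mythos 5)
Your symmetry reduction to the case of $x_{n,i}$ is fine and matches the paper. The decomposition
$\theta_{x_{n,i}}\oplus \theta_{s_1s_2\cdots s_{i-2}s_is_{i+1}}
\cong
\theta_{s_{i+1}}\circ\theta_{s_i}\circ\cdots\circ\theta_{s_1}\circ\theta_{s_i}$
from Lemma~\ref{s5.6-lem1} is also correct, and the complementary summand does indeed kill $L_{\sigma_{n,i}}$ --- but you do not need indecomposability of the composed module to see this: since $s_{n-1}$ lies in the right descent set of $\sigma_{n,i}$ but not in the left descent set of $s_1\cdots s_{i-2}s_is_{i+1}$, one has $(s_1\cdots s_{i-2}s_is_{i+1})^{-1}\not\leq_L\sigma_{n,i}$ and hence $\theta_{s_1\cdots s_{i-2}s_is_{i+1}}L_{\sigma_{n,i}}=0$ directly. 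The consequence is that the composed module simply \emph{equals} $\theta_{x_{n,i}}L_{\sigma_{n,i}}$, so your reformulation ``show that the iterated composition is indecomposable'' is the original problem restated, not a reduction of it.

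The genuine gap is in the step-by-step argument. The summand-counting statement of \cite[Proposition~2]{CMZ} is not a blanket assertion that $\theta_{s_j}$ preserves the number of indecomposable summands of any non-zero module (that is false: $\theta_s\theta_sM\cong\theta_sM\oplus\theta_sM$); it operates within a Kazhdan--Lusztig cell. Your chain works as long as you stay in the $\mathbf{a}=1$ cell, i.e.\ for $s_i,\ s_is_1,\ \dots,\ s_is_1\cdots s_{i-1}$, but it breaks exactly when the second $s_i$ is appended: there $\underline{H}_{s_i}\underline{H}_{s_{i-1}}\underline{H}_{s_i}=\underline{H}_{s_is_{i-1}s_i}+\underline{H}_{s_i}$, so the functor genuinely splits, and the $\mathbf{a}$-value of the surviving index jumps from $1$ to $3$, so the two indices are not cell-equivalent and the proposition no longer applies. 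Deciding which summand survives and whether its value on the module stays indecomposable at this step is precisely the content of the lemma; deferring it to ``careful bookkeeping'' leaves the proof unwritten. The paper avoids this entirely: using the translations already carried out in the proof of Lemma~\ref{s5.6-lem2} it rewrites
$\theta_{x_{n,i}}L_{\sigma_{n,i}}\cong\theta_{s_is_{i-2}s_{i-1}s_is_{i+1}}L_{\sigma_{n,i}s_1s_2\cdots s_{i-3}}$,
observes that the new index is supported on only five simple reflections, and invokes \cite[Corollary~2.8]{CM2}, which guarantees indecomposability for projective functors of such small support. If you want to complete your route, you would need either that corollary or an explicit analysis of the $\mathbf{a}=1\to\mathbf{a}=3$ step.
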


\begin{proof}
We prove that  $\theta_{x_{n,i}} L_{\sigma_{n,i}}$ is indecomposable,
for $\theta_{y_{n,i}} L_{\sigma_{n,i}}$ the proof is similar.
From the proof of Lemma~\ref{s5.6-lem2}, it follows that 
\begin{displaymath}
\theta_{x_{n,i}} L_{\sigma_{n,i}}\cong
\theta_{s_is_{i-2}s_{i-1}s_is_{i+1}}
L_{\sigma_{n,i}s_1s_2\dots s_{i-3}}.
\end{displaymath}
The element $s_is_{i-2}s_{i-1}s_is_{i+1}$
has support $\{i-2,i-1,i,i+1,i+2\}$ of length $5$.
Therefore the claim of our lemma follows from
\cite[Corollary~2.8]{CM2}.
\end{proof}

\begin{lemma}\label{s5.6-lem4}
If $\theta_{x_{6,3}} L_{\sigma_{6,3}}\cong\theta_{y_{6,3}} L_{\sigma_{6,3}}$,
then $\theta_{x_{n,i}} L_{\sigma_{n,i}}\cong\theta_{y_{n,i}} L_{\sigma_{n,i}}$,
for all $n\geq 6$ and $i\in\{3,4,\dots,n-3\}$.
\end{lemma}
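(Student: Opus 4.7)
My strategy is to reduce the general case $(n,i)$ to the base case $(6,3)$ by combining a double Jantzen-middle pruning with the Serre subquotient equivalence of \cite[Theorem~37]{CMZ}, in the same spirit as the proof of Theorem~\ref{thm-main}. The point is that for $(n,i)=(6,3)$ the ``tails'' $s_1\cdots s_{i-3}$ and $s_{n-1}\cdots s_{i+3}$ are both empty, so $x_{6,3}=s_is_{i-2}s_{i-1}s_is_{i+1}$ and $y_{6,3}=s_is_{i+2}s_{i+1}s_is_{i-1}$ are themselves the ``cores'' that appear in the identification already extracted from the proof of Lemma~\ref{s5.6-lem2}.

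First, I would establish the right-side analogue of that identification: a symmetric Hecke-algebra computation (the evident mirror image of Lemma~\ref{s5.6-lem1} followed by the same Jantzen-middle peeling as in Lemma~\ref{s5.6-lem2}) produces
\begin{displaymath}
\theta_{y_{n,i}}L_{\sigma_{n,i}}\cong \theta_{s_is_{i+2}s_{i+1}s_is_{i-1}}L_{\sigma_{n,i}s_{n-1}s_{n-2}\cdots s_{i+3}}.
\end{displaymath}
Next, I would iterate the peeling on each module from the \emph{other} end: the right tail $s_{n-1}\cdots s_{i+3}$ further prunes the base $L_{\sigma_{n,i}s_1s_2\cdots s_{i-3}}$ appearing in the proof of Lemma~\ref{s5.6-lem3}, and symmetrically for the second case. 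The mutual commutativity of the three supports $\{s_1,\ldots,s_{i-3}\}$, $\{s_{n-1},\ldots,s_{i+3}\}$ and the five-element core $\{s_{i-2},\ldots,s_{i+2}\}$ allows the two prunings to be combined coherently, yielding the identifications
\begin{displaymath}
\theta_{x_{n,i}}L_{\sigma_{n,i}}\cong \theta_{s_is_{i-2}s_{i-1}s_is_{i+1}}L_z,\qquad \theta_{y_{n,i}}L_{\sigma_{n,i}}\cong \theta_{s_is_{i+2}s_{i+1}s_is_{i-1}}L_z,
\end{displaymath}
with the \emph{same} base module $L_z$, where $z:=\sigma_{n,i}(s_1s_2\cdots s_{i-3})(s_{n-1}s_{n-2}\cdots s_{i+3})$.

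Second, let $\psi:S_6\hookrightarrow S_n$ be the parabolic embedding $s_j\mapsto s_{i-3+j}$, so $\psi(x_{6,3})=s_is_{i-2}s_{i-1}s_is_{i+1}$ and $\psi(y_{6,3})=s_is_{i+2}s_{i+1}s_is_{i-1}$. A direct length count on the one-line notation of $\sigma_{n,i}$ shows that $z=y_0\,\psi(\sigma_{6,3})$ for a shortest representative $y_0$ of the coset $z\,\psi(S_6)$ in $S_n/\psi(S_6)$, with $\ell(z)=\ell(y_0)+\ell(\psi(\sigma_{6,3}))$. Applying \cite[Theorem~37]{CMZ} to the Serre subquotient of $\mathcal{O}_0$ for $\mathfrak{sl}_n$ associated to $y_0$ (exactly as in the proof of Theorem~\ref{thm-main}) identifies it with $\mathcal{O}_0$ for $\mathfrak{sl}_6$, sending $L_{\sigma_{6,3}}$ to $L_z$ and intertwining $\theta_f$ with $\theta_{\psi(f)}$. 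The base-case isomorphism $\theta_{x_{6,3}}L_{\sigma_{6,3}}\cong \theta_{y_{6,3}}L_{\sigma_{6,3}}$ therefore transports to $\theta_{\psi(x_{6,3})}L_z\cong \theta_{\psi(y_{6,3})}L_z$ in $\mathcal{O}_0$ for $\mathfrak{sl}_n$, via the same adjunction-and-Hom-vanishing argument used at the end of the proof of Theorem~\ref{thm-main}; combining with the above reductions gives the desired isomorphism.

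The main obstacle will be justifying the double pruning step. The original argument in Lemma~\ref{s5.6-lem2} established the one-sided pruning by carefully tracking Kazhdan-Lusztig polynomials and $\mu$-values starting from $L_{\sigma_{n,i}}$; applying the mirror pruning to the new base $L_{\sigma_{n,i}s_1s_2\cdots s_{i-3}}$ requires verifying that the analogues of Lemma~\ref{lem-s7.4-25} and the Bruhat-interval isomorphisms coming from \cite[Theorem~3.7]{BBDVW} still hold for this modified element, and that each intermediate Jantzen middle remains sufficiently simple for the peeling to proceed without picking up extraneous summands from the opposite tail. Once this bookkeeping is settled, the verification that $z$ has the claimed coset decomposition and the Serre-subquotient transport are essentially mechanical.
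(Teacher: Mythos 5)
Your overall strategy --- peel the tails off via Jantzen-middle pruning and then transport the base case through the Serre subquotient equivalence of \cite[Theorem~37]{CMZ} --- is the same engine the paper uses, but you run it in one shot (a single parabolic embedding $S_6\hookrightarrow S_n$ applied to a common base module $L_z$), whereas the paper runs an induction that removes one simple reflection at a time, passing from $(n,i)$ to $(n-1,i-1)$ via the identity $\theta_{x_{n,i}}L_{\sigma_{n,i}}\cong\theta_{s_1x_{n,i}}L_{\sigma_{n,i}s_1}$ together with the observation that $\sigma_{n,i}s_1$ consecutively contains the pattern $\sigma_{n-1,i-1}$ and that $s_1x_{n,i}$, $y_{n,i}$ are exactly the images of $x_{n-1,i-1}$, $y_{n-1,i-1}$ under the corresponding parabolic embedding (and symmetrically from the right end when $i<n-3$).

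The genuine gap is the ``double pruning''. The peeling mechanism of Lemma~\ref{s5.6-lem2} yields $\theta_{s_2}\theta_{s_1}L_{\sigma_{n,i}}\cong\theta_{s_2}L_{\sigma_{n,i}s_1}$ only because $\theta_{s_2}$ is actually applied and kills every constituent of the Jantzen middle of $\theta_{s_1}L_{\sigma_{n,i}}$ except $L_{\sigma_{n,i}s_1}$: the base module can only be traded for another one by reflections that occur as factors of the composite functor being evaluated. The support of $x_{n,i}=s_is_1s_2\cdots s_{i+1}$ is $\{s_1,\dots,s_{i+1}\}$, so the right tail $s_{n-1},\dots,s_{i+3}$ never appears in $\theta_{x_{n,i}}$ and there is nothing available to prune with on that side (and dually for $y_{n,i}$ and the left tail). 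Your claimed identification $\theta_{s_is_{i-2}s_{i-1}s_is_{i+1}}L_{\sigma_{n,i}s_1\cdots s_{i-3}}\cong\theta_{s_is_{i-2}s_{i-1}s_is_{i+1}}L_z$ is therefore an isomorphism of one fixed projective functor applied to two \emph{different} simple modules; this is precisely the kind of non-trivial coincidence (in the spirit of conditions (II)--(III) of Conjecture~\ref{Kh-conj}) that the entire paper is in the business of detecting, and it cannot be reduced to ``verifying the analogues of Lemma~\ref{lem-s7.4-25}'' --- it is not bookkeeping but an unproved new claim for which no mechanism is offered. If you want to keep your one-shot architecture, you would have to replace the double pruning by an iterated use of the \cite[Theorem~37]{CMZ} equivalence (first pass to the subquotient that removes one tail, then perform the mirror pruning inside the smaller category), which in substance collapses back to the paper's step-by-step induction.
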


\begin{proof}
The general case $\theta_{x_{n,i}} L_{\sigma_{n,i}}\cong\theta_{y_{n,i}} L_{\sigma_{n,i}}$
can be inductively reduced to the case $n=6$
and $i=3$ using the following argument. Assuming $i>3$, 
in the proof of Lemma~\ref{s5.6-lem2} we saw that 
\begin{displaymath}
\theta_{x_{n,i}}  L_{\sigma_{n,i}}\cong
\theta_{s_1x_{n,i}}L_{\sigma_{n,i}s_1}.
\end{displaymath}
The consecutive $n-1$-pattern in $\sigma_{n,i}s_1$ starting at 
position $2$ is given by $\sigma_{n-1,i-1}$. The element $s_1x_{n,i}$
is obtained from $x_{n,i}$ by deleting the only occurrence of $s_1$
and the outcome $s_1x_{n,i}$, considered as the element of the parabolic
subgroup of $S_n$ obtained by deleting $s_1$, is exactly the
corresponding element $x_{n-1,i-1}$. The element $y_{n,i}$, considered as 
the element of the same parabolic subgroup, is exactly the corresponding
$y_{n-1,i-1}$. Now the proof of Theorem~\ref{thm-main}, based on the
equivalence in \cite[Theorem~37]{CMZ}, gives the 
reduction step from $(n,i)$ to $(n-1,i-1)$ provided that $i>3$
in the sense that existence of an isomorphism 
$\theta_{x_{n-1,i-1}} L_{\sigma_{n-1,i-1}}\cong
\theta_{y_{n-1,i-1}} L_{\sigma_{n-1,i-1}}$ implies existence of an
isomorphism 
$\theta_{x_{n,i}} L_{\sigma_{n,i}}\cong\theta_{y_{n,i}} L_{\sigma_{n,i}}$,

If $i<n-3$, we  can do a similar manipulation 
removing $s_{n-1}$ which  shows that existence of an isomorphism 
$\theta_{x_{n-1,i}} L_{\sigma_{n-1,i}}\cong
\theta_{y_{n-1,i}} L_{\sigma_{n-1,i}}$ implies existence of an
isomorphism 
$\theta_{x_{n,i}} L_{\sigma_{n,i}}\cong\theta_{y_{n,i}} L_{\sigma_{n,i}}$.
After a finite number of such manipulations, we eventually
end up exactly in the case
$n=6$ and $i=3$. The claim of the lemma follows.
\end{proof}

To complete the proof of Theorem~\ref{thm-cusp3}, it remains to show that
$\theta_{x_{6,3}} L_{\sigma_{6,3}}\cong\theta_{y_{6,3}} L_{\sigma_{6,3}}$.
This will require some effort and preparation.

\begin{lemma}\label{s5.6-lem5}
We have 
$\underline{\hat{H}}_{\sigma_{6,3}}
\underline{{H}}_{x_{6,3}}=
\underline{\hat{H}}_{\sigma_{6,3}}
\underline{{H}}_{y_{6,3}}
$.
\end{lemma}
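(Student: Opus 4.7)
The plan is to reduce this identity in $\mathbf{H}_6$ to a finite and explicit verification. As a starting point, I would apply Lemma~\ref{s5.6-lem1}\eqref{s5.6-lem1-2} with $(n,i)=(6,3)$, together with the natural analog of Lemma~\ref{s5.6-lem1}\eqref{s5.6-lem1-4}, to rewrite
\begin{displaymath}
\underline{H}_{x_{6,3}}=\underline{H}_{s_3}\underline{H}_{s_1}\underline{H}_{s_2}\underline{H}_{s_3}\underline{H}_{s_4}-\underline{H}_{s_1s_3s_4},
\end{displaymath}
\begin{displaymath}
\underline{H}_{y_{6,3}}=\underline{H}_{s_3}\underline{H}_{s_5}\underline{H}_{s_4}\underline{H}_{s_3}\underline{H}_{s_2}-\underline{H}_{s_5s_3s_2}.
\end{displaymath}
Notice that the Dynkin diagram automorphism $s_k\leftrightarrow s_{6-k}$ carries the right-hand side of the first expression to that of the second and fixes $\sigma_{6,3}=(1,4)(3,6)$. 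This does not by itself yield the identity (neither side is a~priori $\phi$-invariant after multiplication by $\underline{\hat{H}}_{\sigma_{6,3}}$), but it explains why the two resulting dual KL basis expansions should match and it organizes the subsequent bookkeeping symmetrically.

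To actually verify the identity, I would expand both sides in the dual KL basis by iterated application of \eqref{eq:kl-mu4}. The crucial piece of input is that the right descent set of $\sigma_{6,3}$ equals $\{s_1,s_3,s_5\}$, so that both $\underline{\hat{H}}_{\sigma_{6,3}}\underline{H}_{s_2}$ and $\underline{\hat{H}}_{\sigma_{6,3}}\underline{H}_{s_4}$ vanish. Consequently, the earliest factors in each of the two products reshape $\underline{\hat{H}}_{\sigma_{6,3}}$ into a manageable dual KL basis combination before any non-trivial cancellation appears. The $\mu$-values required at the later stages of the expansion are either classical (involving Bruhat intervals of length at most three in $S_6$) or follow from the bijective-interval principle of \cite[Theorem~3.7]{BBDVW}, in direct analogy with the computations carried out in the proof of Lemma~\ref{lem-s7.4-4}.

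The main obstacle is simply the bulk of the bookkeeping: each side requires tracking on the order of a dozen dual KL basis components through five successive right multiplications, together with the two correction terms $\underline{\hat{H}}_{\sigma_{6,3}}\underline{H}_{s_1s_3s_4}$ and $\underline{\hat{H}}_{\sigma_{6,3}}\underline{H}_{s_5s_3s_2}$. There is no conceptual obstruction, only computational volume. Accordingly, the most efficient way to finalize the proof is to run the expansion in CHEVIE, in keeping with the computer-assisted verifications used elsewhere in this paper; after both sides are reduced to explicit dual KL basis combinations, a term-by-term comparison establishes the identity.
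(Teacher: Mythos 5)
Your proposal lands in the same place as the paper: the paper's proof of this lemma is literally a single GAP3/CHEVIE computation expanding both sides in the dual Kazhdan--Lusztig basis and recording the (common) explicit answer, which is exactly the verification you defer to at the end. The preparatory scaffolding you add (the decomposition via Lemma~\ref{s5.6-lem1}, the descent-set observations, and the remark that the Dynkin diagram automorphism interchanges the two sides while fixing $\sigma_{6,3}$, so that the identity amounts to a self-symmetry of one expansion) is correct and harmless, but the proof still rests on the same computer check the paper uses.
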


\begin{proof}
A straightforward  GAP3 computation gives that both sides are equal to
\begin{multline*}
\underline{\hat{H}}_{32145}+
\underline{\hat{H}}_{1232145}+
\underline{\hat{H}}_{1321435}+
\underline{\hat{H}}_{3214543}+
\underline{\hat{H}}_{3454321}+
\underline{\hat{H}}_{123214543}+
\underline{\hat{H}}_{123454321}+\\+
\underline{\hat{H}}_{132454321}+
(v+v^{-1})\underline{\hat{H}}_{132145}+
(v+v^{-1})\underline{\hat{H}}_{321454}+
(v+v^{-1})\underline{\hat{H}}_{12321454}+\\+
(v+v^{-1})\underline{\hat{H}}_{13214354}+
(2v+2v^{-1})\underline{\hat{H}}_{13214543}+
(v+v^{-1})\underline{\hat{H}}_{13454321}+\\+
(v+v^{-1})\underline{\hat{H}}_{32145432}+
(v+v^{-1})\underline{\hat{H}}_{1232145432}+
(v+v^{-1})\underline{\hat{H}}_{1324354321}+\\+
(v^2+2+v^{-2})\underline{\hat{H}}_{1321454}+
(v^2+2+v^{-2})\underline{\hat{H}}_{132143543}+\\+
(v^2+2+v^{-2})\underline{\hat{H}}_{132145432}+
(v^3+3v+3v^{-1}+v^{-3})\underline{\hat{H}}_{1321435432}
\end{multline*}
(here all indices are given in terms of reduced expressions).
\end{proof}

\begin{lemma}\label{s5.6-lem6}
We have:
\begin{enumerate}[$($a$)$]
\item\label{s5.6-lem6-1}  $\underline{{H}}_{x_{6,3}}
\underline{{H}}_{y_{6,3}^{-1}}=
(v^3+3v+3v^{-1}+v^{-3})\underline{{H}}_{1232435}$
\item\label{s5.6-lem6-2}  
$\underline{\hat{H}}_{\sigma_{6,3}}\underline{{H}}_{1232435}=
(v^3+3v+3v^{-1}+v^{-3})\underline{\hat{H}}_{\sigma_{6,3}}+u$,
where $u$ is a linear combination of elements in the dual KL basis
with coefficients in $\mathbb{Z}v^2+\mathbb{Z}v+\mathbb{Z}+
\mathbb{Z}v^{-1}+\mathbb{Z}v^{-2}$.
\item\label{s5.6-lem6-3}  $\underline{{H}}_{x_{6,3}}
\underline{{H}}_{x_{6,3}^{-1}}=
(v^3+3v+3v^{-1}+v^{-3})\underline{{H}}_{12321}
+ (v^2+2+v^{-2})\underline{{H}}_{12324321}$.
\item\label{s5.6-lem6-4}  
$\underline{\hat{H}}_{\sigma_{6,3}}\underline{{H}}_{12324321}=0 $.
\item\label{s5.6-lem6-5}  
$\underline{\hat{H}}_{\sigma_{6,3}}\underline{{H}}_{12321}=
(v^3+3v+3v^{-1}+v^{-3})\underline{\hat{H}}_{\sigma_{6,3}}+v$,
where $v$ is a linear combination of elements in the dual KL basis
with coefficients in $\mathbb{Z}v^2+\mathbb{Z}v+\mathbb{Z}+
\mathbb{Z}v^{-1}+\mathbb{Z}v^{-2}$.
\end{enumerate}
\end{lemma}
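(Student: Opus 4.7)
The proof consists of five explicit computations in $\mathbf{H}_6$, each of which can in principle be verified by a direct GAP3/CHEVIE calculation in the spirit of Lemma~\ref{s5.6-lem5}. The plan below describes the structural reasoning that should organize the bookkeeping and make each identity transparent.

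For parts \eqref{s5.6-lem6-1} and \eqref{s5.6-lem6-3}, my first step would be to use Lemma~\ref{s5.6-lem1}(b),(d) together with Lemma~\ref{lem-disjointsupports} to rewrite $\underline{H}_{x_{6,3}}$ and $\underline{H}_{y_{6,3}}$ as short $\mathbb{Z}$-linear combinations of products of KL generators $\underline{H}_{s_i}$; applying the standard anti-involution of $\mathbf{H}_n$ that sends $\underline{H}_w$ to $\underline{H}_{w^{-1}}$ then yields analogous expressions for $\underline{H}_{x_{6,3}^{-1}}$ and $\underline{H}_{y_{6,3}^{-1}}$. Expanding the products $\underline{H}_{x_{6,3}}\underline{H}_{y_{6,3}^{-1}}$ and $\underline{H}_{x_{6,3}}\underline{H}_{x_{6,3}^{-1}}$ with the simple-reflection recursions \eqref{eq:kl-mu}--\eqref{eq:kl-mu2} then yields the stated expressions. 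The factor $v^3+3v+3v^{-1}+v^{-3}=(v+v^{-1})^3$ comes from precisely three consecutive applications of the recursion in the descent case, which can be located directly in the reduced expressions and correspond to the three simple reflections $s_1,s_2,s_3$ that both $x_{6,3}$ and $y_{6,3}$ contain in common.

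For part \eqref{s5.6-lem6-4}, the cleanest route is the preorder criterion $\underline{\hat{H}}_y\underline{H}_x\neq 0 \iff x^{-1}\leq_L y$ recalled in \S\ref{s2.2}. I would compute the Robinson-Schensted tableau $P_{(s_1s_2s_3s_2s_4s_3s_2s_1)^{-1}}$ and compare it with $Q_{\sigma_{6,3}}$; a quick check of shapes and of the dominance comparison (in the style of Lemma~\ref{lem-s3.3-9}) should show that $(12324321)^{-1}\not\leq_L \sigma_{6,3}$, which forces the vanishing. Parts \eqref{s5.6-lem6-2} and \eqref{s5.6-lem6-5} are then handled by iterating the right-multiplication recursion \eqref{eq:kl-mu4} along a reduced expression of the relevant $w$. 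At each step where the current simple reflection is in the right descent of the running index, the coefficient of $\underline{\hat{H}}_{\sigma_{6,3}}$ picks up a factor $v+v^{-1}$; tracking these steps along $1232435=s_1s_2s_3s_2s_4s_3s_5$ and along $12321=s_1s_2s_3s_2s_1$ verifies the coefficient $(v+v^{-1})^3$ in each case, with the remaining contributions to the dual-KL expansion (the error terms denoted $u$ and $v$ in the statement) automatically of degree at most $\pm 2$ for Laurent-degree reasons.

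The main obstacle is the combinatorial bookkeeping in parts \eqref{s5.6-lem6-2} and \eqref{s5.6-lem6-5}: the expansion of $\underline{\hat{H}}_{\sigma_{6,3}}\underline{H}_w$ in the dual KL basis can be quite long (Lemma~\ref{s5.6-lem5} already displays more than a dozen terms), and bounding by hand the degree of every contribution other than the $\underline{\hat{H}}_{\sigma_{6,3}}$-component is tedious. The pragmatic resolution is the same as in Lemma~\ref{s5.6-lem5}: read off all five identities from a single GAP3/CHEVIE session, using the structural reasoning above only as a sanity check on the output.
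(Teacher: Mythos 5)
The paper's own proof of this lemma is a single sentence---``This is checked by a GAP3 computation''---and your proposal ultimately resolves to exactly that, so in substance you take the same route; the structural scaffolding you add (rewriting via Lemma~\ref{s5.6-lem1} and Lemma~\ref{lem-disjointsupports}, the $\leq_L$-criterion for part (d), degree bookkeeping for parts (b) and (e)) is a reasonable sanity check but is not what carries the proof. One minor slip in your heuristic: the common support of $x_{6,3}=s_3s_1s_2s_3s_4$ and $y_{6,3}=s_3s_5s_4s_3s_2$ is $\{s_2,s_3,s_4\}$, not $\{s_1,s_2,s_3\}$, though this does not affect the correctness of the computation-based argument.
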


\begin{proof}
This is checked by a GAP3 computation.
\end{proof}

Below, as usual, we use $\mathrm{hom}$ to denote homomorphisms
in ${}^\mathbb{Z}\mathcal{O}_0$.

\begin{lemma}\label{s5.6-lem7}
We have both 
\begin{displaymath}
\mathrm{hom}(\theta_{x_{6,3}} L_{\sigma_{6,3}},
\theta_{y_{6,3}} L_{\sigma_{6,3}})\neq 0\quad
\text{ and } \quad\mathrm{hom}(\theta_{y_{6,3}} L_{\sigma_{6,3}},
\theta_{x_{6,3}} L_{\sigma_{6,3}})\neq 0 .¨
\end{displaymath}
\end{lemma}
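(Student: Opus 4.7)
The strategy is to establish the first non-vanishing $\mathrm{hom}(\theta_{x_{6,3}}L_{\sigma_{6,3}},\theta_{y_{6,3}}L_{\sigma_{6,3}})\neq 0$ via adjunction together with the Hecke-algebra computations of Lemma~\ref{s5.6-lem6}, and then to deduce the second non-vanishing from the first by the outer Dynkin automorphism of $A_5$. By graded adjunction between $\theta_{x_{6,3}}$ and $\theta_{x_{6,3}^{-1}}$, the first claim is equivalent, up to a graded shift, to
$$\mathrm{hom}(L_{\sigma_{6,3}},\theta_{x_{6,3}^{-1}}\theta_{y_{6,3}}L_{\sigma_{6,3}})\neq 0.$$

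Applying the anti-involution $H_w\mapsto H_{w^{-1}}$ of $\mathbf{H}_6$ (which sends $\underline{H}_w$ to $\underline{H}_{w^{-1}}$) to Lemma~\ref{s5.6-lem6}\eqref{s5.6-lem6-1} yields $\underline{H}_{y_{6,3}}\underline{H}_{x_{6,3}^{-1}}=(v^3+3v+3v^{-1}+v^{-3})\,\underline{H}_{5342321}$, with $5342321=(1232435)^{-1}$. Via the decategorification $\mathbf{Gr}_\oplus[{}^\mathbb{Z}\mathscr{P}]\cong\mathbf{H}_6^{\mathrm{op}}$, the composition $\theta_{x_{6,3}^{-1}}\theta_{y_{6,3}}$ is isomorphic to a direct sum of eight graded-shifted copies of the indecomposable $\theta_{5342321}$. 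It therefore suffices to produce some $k$ for which $\mathrm{hom}(L_{\sigma_{6,3}},\theta_{5342321}L_{\sigma_{6,3}}\langle k\rangle)\neq 0$.

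To see this, I would combine Lemma~\ref{s5.6-lem5} with Lemma~\ref{s5.6-lem6}\eqref{s5.6-lem6-3}--\eqref{s5.6-lem6-5} to compute the graded Grothendieck class
$$\underline{\hat{H}}_{\sigma_{6,3}}\underline{H}_{y_{6,3}}\underline{H}_{x_{6,3}^{-1}}=\underline{\hat{H}}_{\sigma_{6,3}}\underline{H}_{x_{6,3}}\underline{H}_{x_{6,3}^{-1}}=(v^3+3v+3v^{-1}+v^{-3})^2\,\underline{\hat{H}}_{\sigma_{6,3}}+(v^3+3v+3v^{-1}+v^{-3})\,r,$$
where $r$ is a combination of dual Kazhdan-Lusztig basis elements with coefficients of degree at most $2$. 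Since $\theta_{x_{6,3}^{-1}}\theta_{y_{6,3}}L_{\sigma_{6,3}}$ is a direct sum of $(v^3+3v+3v^{-1}+v^{-3})$ shifted copies of $\theta_{5342321}L_{\sigma_{6,3}}$, dividing by this factor gives the graded Grothendieck class $[\theta_{5342321}L_{\sigma_{6,3}}]=(v^3+3v+3v^{-1}+v^{-3})\,\underline{\hat{H}}_{\sigma_{6,3}}+r$. The extremal graded monomial $v^3$ is attained only by $\underline{\hat{H}}_{\sigma_{6,3}}$ (with coefficient $1$), and no other dual KL basis element contributes at that degree, so $L_{\sigma_{6,3}}$ must appear in the socle of $\theta_{5342321}L_{\sigma_{6,3}}$ at the extremal graded degree. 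This gives the desired non-zero morphism.

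The second non-vanishing $\mathrm{hom}(\theta_{y_{6,3}}L_{\sigma_{6,3}},\theta_{x_{6,3}}L_{\sigma_{6,3}})\neq 0$ follows from the first by applying the outer Dynkin automorphism $s_i\mapsto s_{6-i}$ of $A_5$. A direct check shows that this automorphism fixes the involution $\sigma_{6,3}=(1,4)(3,6)$ and sends $x_{6,3}$ to $y_{6,3}=w_0x_{6,3}w_0$, so it induces a self-equivalence of ${}^\mathbb{Z}\mathcal{O}_0$ that interchanges the two hom spaces. The main obstacle I anticipate is the passage from the extremal term of $[\theta_{5342321}L_{\sigma_{6,3}}]$ to the socle statement: this uses the positivity and boundedness of graded Loewy lengths in the Koszul-graded principal block, together with the observation that since only $L_{\sigma_{6,3}}$ contributes at the extremal graded degree $v^3$, it can be accommodated only as a socle layer of the module.
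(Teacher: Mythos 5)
Your argument is correct and follows essentially the same route as the paper: adjunction to move one projective functor across, the factorization from Lemma~\ref{s5.6-lem6}\eqref{s5.6-lem6-1} to reduce to a single indecomposable $\theta$, and the extremal-degree coefficient of $\underline{\hat{H}}_{\sigma_{6,3}}$ to produce the required top/socle constituent; the paper simply takes the adjunction on the other side, landing on $\mathrm{hom}(\theta_{y_{6,3}^{-1}}\theta_{x_{6,3}}L_{\sigma_{6,3}},L_{\sigma_{6,3}})$ and quoting Lemma~\ref{s5.6-lem6}\eqref{s5.6-lem6-2} directly. The one genuinely different ingredient on your side is the derivation of the identity $\underline{\hat{H}}_{\sigma_{6,3}}\underline{H}_{(1232435)^{-1}}=(v^3+3v+3v^{-1}+v^{-3})\underline{\hat{H}}_{\sigma_{6,3}}+r$ from Lemma~\ref{s5.6-lem5} and Lemma~\ref{s5.6-lem6}\eqref{s5.6-lem6-3}--\eqref{s5.6-lem6-5}; this detour is actually necessary for your mirrored setup, since the anti-automorphism $H_w\mapsto H_{w^{-1}}$ turns right multiplication by $\underline{H}$'s into left multiplication, so part \eqref{s5.6-lem6-2} cannot simply be transposed, and it has the small benefit of not requiring an additional GAP3 computation. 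Your handling of the second hom space via the diagram automorphism (checking $w_0\sigma_{6,3}w_0=\sigma_{6,3}$ and $w_0x_{6,3}w_0=y_{6,3}$) is exactly the ``symmetry of the root system'' argument the paper leaves implicit with ``the second one is similar,'' and both verifications are correct.
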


\begin{proof}
We prove the first claim, the second one is similar.
By adjunction, we have 
\begin{equation}\label{eq-s5.6-lem7-1}
\mathrm{hom}(\theta_{x_{6,3}} L_{\sigma_{6,3}},
\theta_{y_{6,3}} L_{\sigma_{6,3}})\cong 
\mathrm{hom}(\theta_{y_{6,3}^{-1}}\circ\theta_{x_{6,3}} L_{\sigma_{6,3}},
L_{\sigma_{6,3}}).
\end{equation}
By Lemma~\ref{s5.6-lem6}\eqref{s5.6-lem6-1}, we have 
\begin{displaymath}
\theta_{y_{6,3}^{-1}}\circ\theta_{x_{6,3}}\cong
\theta_{1232435}\langle 3\rangle\oplus
\theta_{1232435}\langle 1\rangle^{\oplus 2}\oplus
\theta_{1232435}\langle -1\rangle^{\oplus 2}\oplus
\theta_{1232435}\langle -3\rangle.
\end{displaymath}
By Lemma~\ref{s5.6-lem6}\eqref{s5.6-lem6-2},
the module $\theta_{1232435}L_{\sigma_{6,3}}$
has $L_{\sigma_{6,3}}\langle 3\rangle$ in its top.
Therefore the right hand side of \eqref{eq-s5.6-lem7-1}
is non-zero, implying the claim of the lemma.
\end{proof}

\begin{lemma}\label{s5.6-lem8}
For $i\in\mathbb{Z}$, the inequality
$\mathrm{hom}(\theta_{12321} L_{\sigma_{6,3}}\langle i\rangle,
L_{\sigma_{6,3}})\neq 0$ implies $i=-3$.
\end{lemma}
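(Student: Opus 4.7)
The strategy is to identify the graded top of $M:=\theta_{12321}L_{\sigma_{6,3}}$. Since the graded shift commutes with the top functor and $\mathrm{hom}(M\langle i\rangle,L_{\sigma_{6,3}})$ equals the multiplicity of $L_{\sigma_{6,3}}$ in the top of $M\langle i\rangle$, the claim reduces to showing that the top of $M$ is isomorphic to $L_{\sigma_{6,3}}\langle 3\rangle$: non-vanishing of the hom then forces $L_{\sigma_{6,3}}\langle 3+i\rangle\cong L_{\sigma_{6,3}}$, i.e.\ $i=-3$.

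First, I extract the graded composition series of $M$ from Lemma~\ref{s5.6-lem6}\eqref{s5.6-lem6-5}. Under the dictionary $\mathbf{Gr}[{}^{\mathbb{Z}}\mathcal{O}_0]\cong\mathbf{H}_6$ with $v^k\leftrightarrow\langle k\rangle$, the composition factor $L_{\sigma_{6,3}}\langle k\rangle$ occurs in $M$ precisely for $k\in\{3,1,-1,-3\}$, with multiplicities $1,3,3,1$ read off from the polynomial $v^3+3v+3v^{-1}+v^{-3}$. All other simples $L_y$ occur only with graded shifts $\langle k\rangle$ for $|k|\leq 2$. In particular, $M$ is concentrated in graded degrees $[-3,3]$, and the only composition factor of $M$ sitting in the extremal (lowest) degree $-3$ is $L_{\sigma_{6,3}}\langle 3\rangle$, of multiplicity one.

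Second, I argue that the top of $M$ lives in this lowest graded degree. The element $w=12321$ is the involution $(1,4)\in S_6$, whose Robinson-Schensted shape is $(4,1,1)$, so $\mathbf{a}(w)=3$. Since $\theta_{12321}$ is indecomposable and $M$ is non-zero (by the above), the module $M$ is itself indecomposable by \cite[Proposition~2]{CMZ}. Combined with the Loewy-length bound $2\mathbf{a}(w)+1=7$ of \cite[Corollary~7]{Ma2}, the graded self-duality of $M$ (from $w$ being an involution and $L_{\sigma_{6,3}}$ being self-dual), and the positive gradedness of the Koszul algebra underlying ${}^{\mathbb{Z}}\mathcal{O}_0$, the module $M$ is generated in its lowest graded component, so its top lives in degree $-3$. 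The character information of the previous paragraph then forces the top to equal $L_{\sigma_{6,3}}\langle 3\rangle$, proving the claim.

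The main obstacle is the structural input of the third paragraph: that the top of $\theta_wL_x$ (for $\theta_w$ indecomposable applied to a simple $L_x$ with non-zero result) actually achieves the extremal graded degree $-\mathbf{a}(w)$. The Loewy-length bound and the character computation are classical, but cleanly asserting that the top sits at the extremal degree (rather than being pushed inward) requires a careful combination of positive gradedness of $\mathcal{O}_0$, indecomposability of $M$, and the standard graded lift of $\theta_{12321}$ placing the generators of $M$ in its bottom degree.
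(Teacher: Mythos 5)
Your reduction of the lemma to a statement about the graded top of $M:=\theta_{12321}L_{\sigma_{6,3}}$ is reasonable, and your reading of the graded character from Lemma~\ref{s5.6-lem6}\eqref{s5.6-lem6-5} is correct: $L_{\sigma_{6,3}}$ occurs in $M$ only in degrees $\pm 3,\pm 1$, and the extremal-degree occurrence has multiplicity one. By positivity of the grading, that extremal constituent $L_{\sigma_{6,3}}\langle 3\rangle$ automatically lies in the top. But the lemma needs the converse direction: you must exclude that $L_{\sigma_{6,3}}\langle 1\rangle$, $L_{\sigma_{6,3}}\langle \mone\rangle$ or $L_{\sigma_{6,3}}\langle \text{-}3\rangle$ also occurs in the top, and for this you assert that $M$ is generated in its lowest graded component. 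None of the ingredients you invoke --- indecomposability of $M$, graded self-duality, the Loewy length bound $2\mathbf{a}(12321)+1=7$ from \cite[Corollary~7]{Ma2}, and positive gradedness --- implies this: an indecomposable, graded self-dual module of Loewy length $7$ concentrated in degrees $[-3,3]$ can perfectly well have additional top constituents in interior degrees, attached to the rest of the module only through its radical. That simplicity of the top is not automatic is illustrated by Lemma~\ref{s5.6-lem91}, where a rather delicate ad hoc analysis is needed to establish the analogous statement for $\theta_{x_{6,3}}L_{\sigma_{6,3}}$. You flag this step yourself as ``the main obstacle'' requiring ``a careful combination'' of general facts; that combination is precisely what is missing, so the proof is incomplete.

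For comparison, the paper sidesteps the full top of $M$ entirely: it factors $\theta_{12321}\cong\theta_1\circ\theta_{232}\circ\theta_1$, shows that $\theta_{232}\circ\theta_1 L_{\sigma_{6,3}}\cong\theta_{232}L_{132145432}$, and uses that $\theta_{232}$, being indexed by the longest element of a parabolic subgroup of $\mathbf{a}$-value $3$, yields a module with simple top $L_{132145432}\langle 3\rangle$. Adjunction converts the hom space in question into $\mathrm{hom}(\theta_{232}L_{132145432}\langle i\rangle,\theta_1L_{\sigma_{6,3}})$, and the multiplicity-one occurrence of $L_{132145432}$ in degree $0$ of $\theta_1L_{\sigma_{6,3}}$ then pins down $i=-3$. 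To salvage your route you would need an argument of this kind, or an explicit structural analysis as in Lemma~\ref{s5.6-lem91}, to control in which degrees $L_{\sigma_{6,3}}$ can appear in the top of $M$.
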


\begin{proof}
By GAP3 computation, we have 
$\underline{H}_{12321}=
\underline{H}_{1}\underline{H}_{232}\underline{H}_{1}$
and, consequently, we have
$\theta_{1}\circ\theta_{232}\circ\theta_{1}\cong \theta_{12321}$.

Further, by GAP3 computation, the only simple subquotient of
$\theta_1 L_{\sigma_{6,3}}$ that is not killed by 
$\theta_{232}$ is $L_{132145432}$ (it only appears in degree 
zero where it has multiplicity $1$). This means that 
$\theta_{232}\circ\theta_{1}L_{\sigma_{6,3}}\cong \theta_{232}L_{132145432}$.

Since $232$ is the longest elements of a parabolic subgroup
and has $\mathbf{a}$-value $3$, 
the module $\theta_{232}L_{132145432}$ has simple top,
namely, $L_{132145432}\langle 3\rangle$. By adjunction, we have
\begin{displaymath}
\mathrm{hom}(\theta_{12321} L_{\sigma_{6,3}}\langle i\rangle,
L_{\sigma_{6,3}})\cong
\mathrm{hom}(\theta_{232} L_{132145432}\langle i\rangle,
\theta_{1}L_{\sigma_{6,3}}).
\end{displaymath}
Since $\theta_{232} L_{132145432}$ has simple top in degree $3$
and $L_{132145432}$ appears with multiplicity $1$ in 
$\theta_{1}L_{\sigma_{6,3}}$ in degree $0$, it follows that $i=-3$.
\end{proof}

\begin{lemma}\label{s5.6-lem9}
For $i\in\mathbb{Z}$, we have:
\begin{displaymath}
\mathrm{hom}(\theta_{x_{6,3}} L_{\sigma_{6,3}},
\theta_{x_{6,3}} L_{\sigma_{6,3}}\langle i\rangle)=
\begin{cases}
\mathbb{C},& i=0,6;\\
\mathbb{C}^2,& i=2,4;\\
0,& \text{else}.
\end{cases}
\end{displaymath}
\end{lemma}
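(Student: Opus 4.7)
The plan is to reduce the graded endomorphism computation for $M := \theta_{x_{6,3}}L_{\sigma_{6,3}}$ to a socle-multiplicity computation for the simpler module $\theta_{12321}L_{\sigma_{6,3}}$, using graded biadjunction of projective functors followed by the functor decomposition supplied by Lemma~\ref{s5.6-lem6}. Concretely, the first step is to invoke graded biadjunction for $\theta_{x_{6,3}}$, whose left and right adjoints on ${}^{\mathbb{Z}}\mathcal{O}_{0}$ are given by $\theta_{x_{6,3}^{-1}}$ with appropriate grading shifts built from $\ell(x_{6,3})=5$. This produces an isomorphism
\begin{displaymath}
\mathrm{hom}(M, M\langle i\rangle) \;\cong\; \mathrm{hom}\bigl(L_{\sigma_{6,3}},\;(\theta_{x_{6,3}^{-1}}\circ\theta_{x_{6,3}})\,L_{\sigma_{6,3}}\langle i'\rangle\bigr)
\end{displaymath}
for some uniform translation $i\mapsto i'$ absorbing the shift.

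Next, Lemma~\ref{s5.6-lem6}(c) splits $\theta_{x_{6,3}^{-1}}\circ\theta_{x_{6,3}}$ into a direct sum of graded shifts of $\theta_{12321}$, with multiplicity polynomial $v^{3}+3v+3v^{-1}+v^{-3}$, and of $\theta_{12324321}$, with multiplicity polynomial $v^{2}+2+v^{-2}$. Lemma~\ref{s5.6-lem6}(d) annihilates the $\theta_{12324321}$ summands once we apply everything to $L_{\sigma_{6,3}}$. Setting $d_{k}:=\dim\mathrm{hom}(L_{\sigma_{6,3}},\theta_{12321}L_{\sigma_{6,3}}\langle k\rangle)$, the target dimension collapses to
\begin{displaymath}
\dim\mathrm{hom}(M, M\langle i\rangle) \;=\; d_{i'-3} + 3\,d_{i'-1} + 3\,d_{i'+1} + d_{i'+3}.
\end{displaymath}

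The main task is then to pin down the $d_{k}$. The word $12321 = s_{1}s_{2}s_{3}s_{2}s_{1}$ is a palindrome and represents the involution $(1,4)\in S_{6}$, so $\theta_{12321}$ is self-biadjoint up to a grading shift; combining this self-biadjointness with Lemma~\ref{s5.6-lem8} produces a vanishing range for the $d_{k}$. The precise non-vanishing values can then be extracted from the explicit expansion of $\underline{\hat{H}}_{\sigma_{6,3}}\underline{H}_{12321}$ in Lemma~\ref{s5.6-lem6}(e), which records all graded composition multiplicities of $L_{\sigma_{6,3}}$ in $\theta_{12321}L_{\sigma_{6,3}}$, combined either with a short structural analysis of the factorisation $\theta_{12321}=\theta_{1}\theta_{232}\theta_{1}$ already used in the proof of Lemma~\ref{s5.6-lem8} or, most efficiently, with a direct GAP3 verification in the style of Lemmas~\ref{s5.6-lem5} and \ref{s5.6-lem6}. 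Substituting the values of $d_{k}$ into the assembly formula above and collecting powers of $v$ produces the claimed Poincar\'e polynomial $1+2v^{2}+2v^{4}+v^{6}$. The chief obstacle is precisely the determination of the $d_{k}$: Lemma~\ref{s5.6-lem8} supplies only a necessary vanishing condition, so one must separate the genuine socle contributions from the ambient composition factors by either a delicate structural argument or a computer check, and some care is needed to track the biadjunction shift $i\mapsto i'$ consistently throughout.
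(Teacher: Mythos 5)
Your argument is essentially the paper's own proof: move $\theta_{x_{6,3}}$ across by adjunction, decompose $\theta_{x_{6,3}^{-1}}\circ\theta_{x_{6,3}}$ via Lemma~\ref{s5.6-lem6}\eqref{s5.6-lem6-3}, discard the $\theta_{12324321}$ summands by Lemma~\ref{s5.6-lem6}\eqref{s5.6-lem6-4}, and pin down the graded homs between $\theta_{12321}L_{\sigma_{6,3}}$ and $L_{\sigma_{6,3}}$ using Lemma~\ref{s5.6-lem6}\eqref{s5.6-lem6-5} together with Lemma~\ref{s5.6-lem8}; working with homs into rather than out of $\theta_{12321}L_{\sigma_{6,3}}$ changes nothing by self-duality. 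Two caveats. First, the translation $i\mapsto i'$ that you leave undetermined is in fact the identity: in the normalization used here $\theta_w$ and $\theta_{w^{-1}}$ are biadjoint as graded functors with no shift (this is exactly how the adjunction is applied, shift-free, in the proof of Lemma~\ref{s5.6-lem7}), so until you fix $i'=i$ your argument only determines the answer up to a uniform degree translation. Second, your assembly formula $d_{i'-3}+3\,d_{i'-1}+3\,d_{i'+1}+d_{i'+3}$, with the coefficients $3$ read off from the polynomial $v^{3}+3v+3v^{-1}+v^{-3}$ in Lemma~\ref{s5.6-lem6}\eqref{s5.6-lem6-3} and with $d_k$ supported in a single degree with value $1$, yields $\mathbb{C}^{3}$ in degrees $2$ and $4$, not the $\mathbb{C}^{2}$ you assert; this mismatch is inherited from the source, whose own proof of the lemma uses $\theta_{12321}\langle\pm 1\rangle^{\oplus 2}$ where the stated polynomial would give $^{\oplus 3}$, so one of the two is a typo, but your write-up should flag rather than silently absorb the discrepancy, since the stated value $\mathbb{C}^{2}$ depends on resolving it.
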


\begin{proof}
By adjunction,
\begin{displaymath}
\mathrm{hom}(\theta_{x_{6,3}} L_{\sigma_{6,3}},
\theta_{x_{6,3}} L_{\sigma_{6,3}}\langle i\rangle)=
\mathrm{hom}(\theta_{x_{6,3}^{-1}} \circ\theta_{x_{6,3}} L_{\sigma_{6,3}},
L_{\sigma_{6,3}}\langle i\rangle).
\end{displaymath}
By Lemma~\ref{s5.6-lem6}\eqref{s5.6-lem6-3}, we have 
\begin{multline*}
\theta_{x_{6,3}^{-1}}\circ\theta_{x_{6,3}}\cong
\theta_{12321}\langle 3\rangle\oplus
\theta_{12321}\langle 1\rangle^{\oplus 2}\oplus
\theta_{12321}\langle -1\rangle^{\oplus 2}\oplus
\theta_{12321}\langle -3\rangle\oplus \\ \oplus 
\theta_{12324321}\langle 2\rangle\oplus
\theta_{12324321}^{\oplus 2}\oplus
\theta_{12324321}\langle -2\rangle.
\end{multline*}
By Lemma~\ref{s5.6-lem6}\eqref{s5.6-lem6-4},
we have $\theta_{12324321}L_{\sigma_{6,3}}=0$
and hence we can ignore the corresponding summands.
Now the claim of the lemma follows from 
Lemma~\ref{s5.6-lem6}\eqref{s5.6-lem6-5}
and Lemma~\ref{s5.6-lem8}.
\end{proof}

\begin{lemma}\label{s5.6-lem91}
Both $\theta_{x_{6,3}}L_{\sigma_{6,3}}$ and 
$\theta_{y_{6,3}}L_{\sigma_{6,3}}$ have simple tops and socles.
\end{lemma}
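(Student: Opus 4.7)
My plan is to identify both the simple top and the simple socle of $M := \theta_{x_{6,3}} L_{\sigma_{6,3}}$ directly from the graded Kazhdan--Lusztig data already computed; the corresponding conclusions for $N := \theta_{y_{6,3}} L_{\sigma_{6,3}}$ will then follow via the Dynkin-diagram symmetry of $A_5$ that exchanges $s_i \leftrightarrow s_{6-i}$, fixes $\sigma_{6,3}$, and interchanges $x_{6,3}$ with $y_{6,3}$. By Lemma~\ref{s5.6-lem3}, both modules are indecomposable to start with.

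The strategy is to extract the top and the socle of $M$ from its extreme graded pieces, which I will read off from the explicit expansion in Lemma~\ref{s5.6-lem5}. In the Soergel normalization used throughout the paper, the formal variable $v$ corresponds to the grading shift $\langle -1 \rangle$, so the coefficient of $v^k \underline{\hat H}_y$ in $[M] = \underline{\hat H}_{\sigma_{6,3}} \underline{H}_{x_{6,3}}$ equals the graded multiplicity of $L_y \langle -k \rangle$ as a composition factor of $M$. Scanning Lemma~\ref{s5.6-lem5}, the extreme powers $v^{\pm 3}$ appear only in the factor $v^3 + 3v + 3v^{-1} + v^{-3}$ multiplying $\underline{\hat H}_{1321435432}$, each with coefficient $1$. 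Consequently, both extreme graded pieces $M_{-3}$ and $M_3$ are spanned by a unique copy of $L_{1321435432}$ (at shift $-3$ and $3$, respectively), and no other simple contributes in these two degrees.

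To pass from the extreme graded pieces to the top and the socle, I will invoke the positivity of the Koszul grading of ${}^{\mathbb{Z}} \mathcal{O}_0$. Because the underlying Koszul algebra raises graded degree, the standard graded lift of $\theta_w L_z$ (with $L_z$ simple and placed in degree $0$) is cyclic, with cyclic generator in the lowest graded degree, and dually co-cyclic with co-generator in the highest graded degree. Combined with the previous paragraph, this forces $\mathrm{top}(M) = L_{1321435432}\langle -3\rangle$ and $\mathrm{socle}(M) = L_{1321435432}\langle 3\rangle$, both simple. The same conclusion for $N$ follows by transport under the root-system symmetry.

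The main technical point of care is the assertion that the standard graded lift of $\theta_w L_z$ is cyclic with generator in the lowest graded degree (equivalently, co-cyclic at the top degree). This is a structural property of graded projective functors on ${}^{\mathbb{Z}} \mathcal{O}_0$ that I would either verify directly by induction along the reduced expression of $x_{6,3}$ (using the known description of graded $\theta_{s_i}$ acting on cyclic modules with simple top in the lowest graded degree) or cite from the relevant literature on graded category~$\mathcal{O}$; once this is granted, the proof reduces to the inspection of Lemma~\ref{s5.6-lem5} carried out above.
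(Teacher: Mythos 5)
Your reduction to $\theta_{x_{6,3}}L_{\sigma_{6,3}}$ via the diagram symmetry, and your reading of Lemma~\ref{s5.6-lem5} (the extreme graded pieces in degrees $\pm 3$ each consist of a single copy of $L_{1321435432}$), are both fine, and it is also true for a positively graded algebra that the lowest graded piece injects into the top and the highest graded piece sits in the socle. The gap is the converse inclusion: your claim that the standard graded lift of $\theta_w L_z$ is \emph{cyclic, generated in its lowest graded degree} (equivalently, cogenerated in its highest degree) is not a standard fact and is false in general. For a positively graded module the top is $M/(\mathrm{rad}A)M$, which contains $M_{\min}$ but may well have constituents in higher degrees; nothing about being of the form $\theta_w L_z$ rules this out. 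Indeed, the suggested induction along a reduced expression breaks at the first step: if $M$ has simple top $L_w$ concentrated in its lowest degree and $ws<w$, the top of $\theta_s M$ typically acquires new constituents coming from the Jantzen middle, in degrees strictly above the new minimum (already $\theta_s P_w$, which decomposes as $P_{ws}\oplus\bigoplus_x P_x^{\oplus\mu(x,w)}$ with top spread over two degrees, shows this). So the "main technical point of care" you defer is exactly the content of the lemma, and there is no literature reference that supplies it.

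The paper's proof is devoted entirely to closing this gap for the specific module at hand. It uses self-duality plus the positivity of the endomorphism ring (Lemma~\ref{s5.6-lem9}) to confine any extra socle constituent to degrees $1$ or $2$; it then restricts the candidates to simples in the right cell of $\sigma_{6,3}$ occurring in the character, cuts the list down to $L_{1321454}$ and $L_{1321435432}$ using the fact that both $s_2$ and $s_4$ lie in the right descent set of $x_{6,3}$ (so any socle constituent $L_w$ must satisfy $\theta_2L_w\neq 0\neq\theta_4L_w$), eliminates $L_{1321454}$ by the computation $\theta_{x_{6,3}^{-1}}L_{1321454}=0$, and finally excludes a copy of $L_{1321435432}$ in degree $\pm 1$ by factoring $\theta_{x_{6,3}}=\theta_4\circ\theta_{232}\circ\theta_1$ and comparing degrees against the simple socle of $\theta_{232}L_{132145432}$ in degree $3$. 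Some argument of this kind (or another substitute for your unproved generation-in-lowest-degree claim) is needed; as written, your proof does not establish the lemma.
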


\begin{proof}
We prove the claim for $\theta_{x_{6,3}}L_{\sigma_{6,3}}$.
The claim for  $\theta_{y_{6,3}}L_{\sigma_{6,3}}$ then follows
from the symmetry of the root system.

As $\theta_{x_{6,3}}L_{\sigma_{6,3}}$ is self-dual, it is enough
to prove the claim for the socle. All simple subquotients of 
$\theta_{x_{6,3}}L_{\sigma_{6,3}}$ are given in the proof of
Lemma~\ref{s5.6-lem5}. From Lemma~\ref{s5.6-lem9} we know that
the endomorphism algebra of $\theta_{x_{6,3}}L_{\sigma_{6,3}}$
is positively graded, in particular, the degree zero part
is just $\mathbb{C}$ and hence consists of  the scalar
multiples of the identity. There is a socle constituent in
the extreme degree $3$ of the module $\theta_{x_{6,3}}L_{\sigma_{6,3}}$
and that degree has total length $1$. If there
is any other socle constituent, it must live in strictly
positive degrees, so in either degree $1$ or in degree $2$.
Also, any simple socle constituent must be from the same
KL right cell as $\sigma_{6,3}$. Here is the list of all 
simple subquotients of $\theta_{x_{6,3}}L_{\sigma_{6,3}}$
which appear in positive degrees and come from the same
KL right cells:
\begin{displaymath}
L_{1321454}, \quad
L_{13214543},\quad
L_{132143543},\quad
L_{132145432},\quad
L_{1321435432}.
\end{displaymath}

Next we note that the right descent set of  $x_{6,3}$ consists of $2$
and $4$. Hence 
\begin{displaymath}
\theta_2\theta_{x_{6,3}}=\theta_{x_{6,3}}
\langle 1\rangle\oplus\theta_{x_{6,3}}\langle -1\rangle
\text{ and }
\theta_4\theta_{x_{6,3}}=\theta_{x_{6,3}}
\langle 1\rangle\oplus\theta_{x_{6,3}}\langle -1\rangle 
\end{displaymath}
and therefore, by adjunction,
$\mathrm{hom}(L_w,\theta_{x_{6,3}}L_{\sigma_{6,3}})\neq 0$
implies both $\theta_2 L_w\neq 0$ and $\theta_4 L_w\neq 0$.
In other words, both $2$ and $4$ must belong to the right descent set
of $w$. In the above list, only two elements contain $2$ and $4$
in their right descent set, these are 
\begin{displaymath}
L_{1321454}\text{ and }
L_{1321435432}.
\end{displaymath}
However, a GAP3 computation shows that 
$\theta_{x_{6,3}^{-1}}L_{1321454}=0$ which, by adjunction,
implies that $\mathrm{hom}(L_{1321454},\theta_{x_{6,3}}L_{\sigma_{6,3}})=0$.
This leaves us only with $L_{1321435432}$, which appears in 
degree $3$ (a top constituent), in degree $-3$ (a socle constituent)
and also with multiplicity $2$ in degrees $\pm 1$. So, the only 
alternative to $\theta_{x_{6,3}}L_{\sigma_{6,3}}$ having simple top
and socle is that it could have a socle constituent
in degree $1$ and, respectively, a top constituent in degree $-1$.
So, assume
\begin{displaymath}
\mathrm{hom}(L_{1321435432}\langle -1\rangle,
\theta_{x_{6,3}}L_{\sigma_{6,3}})\neq 0. 
\end{displaymath}

As $\underline{H}_{x_{6,3}}=\underline{H}_1\underline{H}_{232}\underline{H}_4$,
we have $\theta_{x_{6,3}}=\theta_{4}\circ\theta_{232}\circ\theta_{1}$ and hence,
by adjunction,
\begin{displaymath}
\mathrm{hom}(\theta_4 L_{1321435432}\langle -1\rangle,
\theta_{232}\circ \theta_{1} L_{\sigma_{6,3}})\neq 0. 
\end{displaymath}
The only simple subquotient of  $\theta_{1} L_{\sigma_{6,3}}$
that survives $\theta_{232}$ is $L_{132145432}$. Hence
\begin{displaymath}
\theta_{232}\circ \theta_{1} L_{\sigma_{6,3}}\cong 
\theta_{232}L_{132145432}.
\end{displaymath}
This gives
\begin{displaymath}
\mathrm{hom}(\theta_4 L_{1321435432}\langle -1\rangle,
\theta_{232}L_{132145432})\neq 0. 
\end{displaymath}
This is, however, not possible as the module 
$\theta_4 L_{1321435432}\langle -1\rangle$ lives in
degrees $0,1,2$ while the module 
$\theta_{232}L_{132145432}$ has simple socle in degree $3$
(since $232$ has $\mathbf{a}$-value $3$ and is the longest element
of a parabolic subalgebra). The obtained contradiction completes the proof.
\end{proof}

Now let $\varphi: \theta_{x_{6,3}}L_{\sigma_{6,3}}
\to \theta_{y_{6,3}}L_{\sigma_{6,3}}$ be a
non-zero morphisms given by Lemma~\ref{s5.6-lem7}.
Since $\theta_{y_{6,3}}L_{\sigma_{6,3}}$ has simple
socle by Lemma~\ref{s5.6-lem91}, this socle must be 
contained in the image of  $\varphi$. Since $\varphi$ has degree zero,
this implies that $\varphi$ does not annihilate 
the simple socle of $\theta_{x_{6,3}}L_{\sigma_{6,3}}$,
in particular, $\varphi$ is injective. 
Since the characters of
$\theta_{x_{6,3}}L_{\sigma_{6,3}}$ and 
$\theta_{y_{6,3}}L_{\sigma_{6,3}}$ coincide by Lemma~\ref{s5.6-lem5},
it follows that $\varphi$ is an  isomorphism.

This establishes the Kostant negativity of $L_{\sigma_{6,3}}$
and thus our proof of Theorem~\ref{thm-cusp3} is now complete.

\vspace{2mm}

\noindent
Department of Mathematics, Uppsala University, Box. 480,
SE-75106, Uppsala, \\ SWEDEN, 
emails:
{\tt samuel.creedon\symbol{64}math.uu.se}\hspace{5mm}
{\tt mazor\symbol{64}math.uu.se}

\end{document}